\DeclarePairedDelimiter\floor{\lfloor}{\rfloor}
\newtheorem{theorem}{Theorem}[section]
\newtheorem*{theorem*}{Theorem}
\newtheorem{lemma}[theorem]{Lemma}
\newtheorem{proposition}[theorem]{Proposition}
\theoremstyle{definition}
\newtheorem{definition}[theorem]{Definition}
\newtheorem{corollary}[theorem]{Corollary}
\newtheorem{Remark}[theorem]{Remark}
\theoremstyle{remark}
\newtheorem{remark}[theorem]{Remark}
\newcommand{\supp}{\mathop{\mathrm{supp}}}
\newcommand{\dd}{\mathrm{d}}
\newcommand{\ee}{\mathrm{e}}
\newcommand{\ve}{\varepsilon}
\newcommand{\vp}{\varphi}
\newcommand{\fin}{f_{\mathrm{in}}}
\newcommand{\gin}{g_{\mathrm{in}}}
\newcommand{\Min}{M_{\mathrm{in}}}
\numberwithin{equation}{section}
\newcommand{\Entr}{\mathcal{H}}
\newcommand{\dom}{{\mathbb{R}^d}}
\newcommand{\divv}{\mathrm{div}}
\newcommand{\stn}{^{(t_0)}}
\newcommand{\vt}{\vartheta}
\newcommand{\ul}{\underline}
\newcommand{\fpn}{\textsc{fp}$_\gamma$}
\newcommand{\fpnreg}{\textsc{fp}$_{\gamma.\mathrm{reg}}$}
\newcommand{\spi}{\delta}
\newcommand{\loc}{\mathrm{loc}}
\newcommand{\ptm}{a}
\newcommand{\weakstar}{\overset{\ast}{\rightharpoonup}}
\newcommand{\weak}{\rightharpoonup}
\newcommand{\RdnoO}{U}
\newcommand{\kqsign}{\sigma}
\newcommand{\tvmeas}{\mathcal{L}^{1+d}}
\newcommand{\kcp}{K_*}
\newcommand{\XX}{{X_{\ell,n}}}
\newcommand{\CC}{C_{\#}}
\title[]
{Singularities in $L^1$-supercritical Fokker--Planck equations: A qualitative analysis}
\author[]
{Katharina Hopf}
\address{Katharina Hopf, Weierstrass Institute for Applied Analysis and Stochastics (WIAS),
	Mohrenstrasse 39, 10117 Berlin, Germany, 
	E-Mail: hopf@wias-berlin.de  
}
\keywords{Fokker--Planck equations for bosons; nonlinear mobility;
	continuation beyond singularities; singular limit;
	universal blow-up profile; relaxation to minimising measure. \medskip
}
\subjclass[2020]{
	35Q84, 	
	35K55, 	
	35A21, 	 
	35R06, 	
	35B40}	
	\def\MR#1{}
\begin{document}

\begin{abstract}
	A class of nonlinear Fokker--Planck  equations with superlinear drift is investigated
	in the $L^1$-supercritical regime, which exhibits a finite critical mass.
	The equations have a formal Wasserstein-like gradient-flow structure 
	with a convex mobility and a free energy functional whose minimising measure has a singular component if above the critical mass. Singularities and concentrations also arise in the evolutionary problem and their finite-time appearance constitutes a primary technical difficulty.
	This paper aims at a global-in-time qualitative analysis
	with main focus on the isotropic case, where solutions will be shown to converge to the unique minimiser of the free energy as time tends to infinity.		
	A key step in the analysis consists in properly controlling the singularity profiles during the evolution.
Our study covers the 3D Kaniadakis--Quarati model for Bose--Einstein particles, and thus provides a first rigorous result on the continuation beyond blow-up and long-time asymptotic behaviour for this model.
\end{abstract}

	\maketitle
	
	\section{Introduction}
This manuscript is concerned with a class of Fokker--Planck equations with superlinear drift
taking the form
	\begin{equation}\label{eq:fpn}\tag{\fpn}
			\begin{aligned}
		\partial_tf&=\nabla\cdot\left(\nabla f+vh(f)\right),&& t>0, v\in \mathbb{R}^d,
		\\f(0,v)&=\fin(v)\ge0, &&v\in \mathbb{R}^d,
	\end{aligned}
	\end{equation}
where $h(f)=f(1+\kqsign |f|^\gamma)$ for some $\gamma\ge 1$ and $\kqsign=1$.
For $\gamma=1$ and $\kqsign\in\{\pm1\}$ this equation has been introduced in the 1990s by Kandiadakis and Quarati~\cite{KQ_1993,KQ_1994} as a model for the relaxation to equilibrium of quantum particles of Fermi--Dirac ($\kqsign=-1$) and Bose--Einstein ($\kqsign=1$) type.
We refer to~\cite{CHW_2020,Frank_2005} and references therein for more background on the physical model.
The interest of the mathematics community in problems of the form~\eqref{eq:fpn} mainly stems from their variational structure: 
for densities $f\ge 0$ the first line in~\eqref{eq:fpn} can formally be written as a continuity equation
\begin{align}\label{eq:gradflow}
	\partial_tf = \nabla\cdot\left(h(f)\nabla\delta \mathcal{H}(f)\right)
\end{align}
with $\delta \mathcal{H}$ denoting the variational derivative of the convex integral functional
\begin{align*}
	\mathcal{H}(f):=\int_\dom \left(\frac{|v|^2}{2}f+\Phi(f)\right)\,\dd v,
\end{align*}
where $\Phi(f):=\frac{1}{\gamma}\int_0^f\log\left(\frac{s^\gamma}{1+\kqsign s^\gamma}\right)\dd s$ and thus $\Phi''(f)=1/h(f)$. (If $\kqsign=-1$ one should restrict to $0\le f\le 1$.)
Thus,  the \textit{free energy} $\mathcal{H}(f)$ is formally dissipated along solutions
$\frac{\dd}{\dd t}\mathcal{H}(f)=-\int_\dom h(f)\left|\nabla\delta\mathcal{H}(f)\right|^2\,\dd v\le 0.$
Let us note that for $\kqsign=1$ the function $\Phi$ is sublinear at infinity, and the natural extension of $\mathcal{H}$ to finite, non-negative measures (cf.~\cite{DT_1984}) vanishes on Dirac deltas centred at the origin.
 We further observe that for $\kqsign=1$ the nonlinear mobility $h(f)=f(1+\kqsign f^\gamma)$ in~\eqref{eq:gradflow} is convex, while it is concave if $\kqsign=-1$.

The equation for fermions, where $\kqsign=-1$ and $\gamma=1$, is mathematically well-understood.
Here, in any dimension, solutions emanating from suitably regular initial data $0\le\fin\le1$
remain bounded  between zero and one, i.e.\ satisfy $0\le f\le 1$,  consistent with the well-known Pauli exclusion principle. In the long-time limit they converge to the unique minimiser of $\mathcal{H}$ of the given mass~\cite{CLR_2009,CRS_2008}, namely to the corresponding (smooth) Fermi--Dirac distribution.
The concavity of the mobility even allows to give a rigorous meaning to the above gradient-flow structure with respect to generalised Wasserstein distances~\cite{DNS_2009,CLSS_2010}, which fails for the convex/non-concave mobilities associated with $\kqsign=1$.

The bosonic case, where $\kqsign=1$ (and $\gamma=1$), is more challenging.
Then, equation~\eqref{eq:fpn} becomes $L^1$-supercritical in dimension $d>2$, in which case the large-data long-time analysis has remained open for quite a while.  In fact, a first global-in-time rigorous study of the $L^1$-supercritical regime has only recently been obtained in~\cite{CHR_2020} for a 1D analogue, that is for~\eqref{eq:fpn} with 
$\kqsign=1$, $d=1$ and $\gamma>2$, and is based on a Lagrangian approach and viscosity solution techniques.
 In the physically most interesting case $d=3$ and $\gamma=1$, which will be the main focus of this manuscript, no rigorous long-time analysis exists when $\kqsign=1$, except for the ref.~\cite{Toscani_2012} showing finite-time blow-up for large data by a virial-type contradiction argument.
In the $L^1$-critical case, in contrast, solutions are globally regular~\cite{CCLR_2016}. 
For numerical studies on the singularity formation in the supercritical case, we refer to \cite{CHW_2020,SSC_2006}.  The qualitative properties obtained in the present manuscript are in agreement with the simulations in~\cite{CHW_2020}, although our approximation scheme is different and not restricted to the isotropic case. Let us mention that the uniqueness and stability properties of the present scheme in the isotropic setting may also be of interest numerically. 

In this paper we perform a rigorous global-in-time existence and qualitative analysis of~\eqref{eq:fpn} with $\kqsign=1$ in the $L^1$-supercritical regime in higher dimensions $d\ge1$ our main interest being the bosonic 3D Kaniadakis--Quarati model ($\kqsign=1$ and $d=3,\gamma=1$); thus, hereafter $\kqsign=1$. 
Preservation of the variational structure beyond finite-time blow-up being a primary concern, 
we build our analysis on a suitably chosen approximation scheme that respects the basic mass conservation and structural properties of the continuity equation~\eqref{eq:gradflow}.
To begin with, we note that 
the stationary mass-constrained minimisation problem for $\mathcal{H}$
is well-understood.
 The minimisers of $\mathcal{H}$ for a given mass have been characterised in~\cite{BGT_2011} and are in fact explicit:

\begin{theorem*}[\cite{BGT_2011}, Theorem~3.1]\label{thm:entrMin}
	For every $m\in(0,\infty)$ the functional $\mathcal{H}$ 
	has a unique minimiser $\mu_{\min}=\mu^{(m)}_{\min}$ on the manifold
	$\{\mu\in\mathcal{M}_+(\mathbb{R}^d):\int \dd\mu=m\}.$\footnote{We define $\mathcal{H}(\mu):=\infty$ if $\int_{\mathbb{R}^d}|v|^2\dd\mu=\infty$.}	
	
	This minimiser takes the form
	\begin{align}\mu_{\min}=
		\begin{cases}
			f_{\infty,\theta}\,\mathcal{L}^d&\text{if }m\le m_c,\text{ where }\theta\in\mathbb{R}_{\ge0}\text{ obeys }\int_{\mathbb{R}^d} f_{\infty,\theta}(v)\,\dd v=m,\;\;
			\\f_c\,\mathcal{L}^d+(m-m_c)\,\delta_0&\text{if }m>m_c.
		\end{cases}\quad
		\label{eq:min}
	\end{align}
	Here
	\begin{align}\label{eq:den}
		f_{\infty,\theta}(v)
		=(\Phi')^{-1}\left(\;{-}\tfrac{1}{2}|v|^2{-}\theta\;\right)
		=\left(\mathrm{e}^{\gamma\big(\frac{|v|^2}{2}+\theta\big)}-1\right)^{-\frac{1}{\gamma}},\quad\theta\in\mathbb{R}_{\ge0},
	\end{align}
	and we abbreviated $f_c:=f_{\infty,0}$ as well as $m_c:=\int_{\mathbb{R}^d}f_c(v)\,\dd v\in(0,\infty]$.
\end{theorem*}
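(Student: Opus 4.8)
The plan is to bypass the direct method and verify directly, using only the convexity of $\Phi$, that the explicit measure $\mu_{\min}$ in~\eqref{eq:min} is a minimiser; strict convexity will then give uniqueness. First one sets up the candidate: for $m\le m_c$ one checks that $M(\theta):=\int_{\mathbb{R}^d}f_{\infty,\theta}\,\dd v$ is a continuous, strictly decreasing bijection of $[0,\infty)$ onto $(0,m_c]$ — continuity and $M(\theta)\to0$ as $\theta\to\infty$ by dominated convergence (dominating $f_{\infty,\theta}$ by $f_{\infty,\theta_0}\in L^1$ for $\theta\ge\theta_0>0$), strict monotonicity from that of $\theta\mapsto f_{\infty,\theta}(v)$, and $M(0)=m_c$ by definition; here $m_c<\infty$ exactly in the $L^1$-supercritical regime $\gamma>2/d$, where the origin singularity $f_c(v)\sim c|v|^{-2/\gamma}$ is locally integrable (otherwise $m_c=\infty$ and only the first alternative occurs). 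Thus for $m\le m_c$ there is a unique $\theta=\theta(m)\ge0$ with $M(\theta)=m$, and one sets $\theta=0$ for $m>m_c$. In every case $\mathcal{H}(\mu_{\min})<\infty$: the potential term is finite since $f_{\infty,\theta}$ has a Gaussian tail, and $\int_{\mathbb{R}^d}|\Phi(f_{\infty,\theta})|\,\dd v<\infty$ because $\Phi\le0$ is sublinear with $|\Phi(f)|\lesssim f+f|\log f|+(\log f)^+$, which is integrable against any density of finite mass and finite second moment.

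For the core estimate, let $\nu\in\mathcal{M}_+(\mathbb{R}^d)$ have mass $m$ with $\mathcal{H}(\nu)<\infty$, and write the Lebesgue decompositions $\nu=f_\nu\mathcal{L}^d+\nu^s$ and $\mu_{\min}=f_{\infty,\theta}\mathcal{L}^d+\mu^s_{\min}$, where $\mu^s_{\min}=0$ if $m\le m_c$ and $\mu^s_{\min}=(m-m_c)\delta_0$ if $m>m_c$. Since $f_{\infty,\theta}>0$ a.e.\ and, by construction, $\Phi'(f_{\infty,\theta}(v))=-\tfrac12|v|^2-\theta$ a.e., convexity of $\Phi$ gives $\Phi(f_\nu)-\Phi(f_{\infty,\theta})\ge(-\tfrac12|v|^2-\theta)(f_\nu-f_{\infty,\theta})$ pointwise a.e. Integrating this (all integrals being finite because $\mathcal{H}(\nu)<\infty$ controls mass and second moment) and using $\int f_\nu\,\dd v=m-\nu^s(\mathbb{R}^d)$ and $\int f_{\infty,\theta}\,\dd v=m-\mu^s_{\min}(\mathbb{R}^d)$, one finds after cancellation
\begin{align*}
\mathcal{H}(\nu)-\mathcal{H}(\mu_{\min})\ge{}&\Big(\tfrac12\int|v|^2\,\dd\nu^s+\theta\,\nu^s(\mathbb{R}^d)\Big)\\
&{}-\Big(\tfrac12\int|v|^2\,\dd\mu^s_{\min}+\theta\,\mu^s_{\min}(\mathbb{R}^d)\Big).
\end{align*}
The bracket containing $\mu^s_{\min}$ vanishes in every case — either $\mu^s_{\min}=0$, or $\theta=0$ and $\mu^s_{\min}$ sits at the origin — while the bracket containing $\nu^s$ is non-negative since $\theta\ge0$. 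Hence $\mathcal{H}(\nu)\ge\mathcal{H}(\mu_{\min})$, which simultaneously proves existence and minimality.

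For uniqueness, assume $\mathcal{H}(\nu)=\mathcal{H}(\mu_{\min})$; then both inequalities above are equalities. Strict convexity of $\Phi$ on $(0,\infty)$ (recall $\Phi''=1/h>0$) together with $f_{\infty,\theta}>0$ a.e.\ force $f_\nu=f_{\infty,\theta}$ a.e.; and $\tfrac12\int|v|^2\,\dd\nu^s+\theta\,\nu^s(\mathbb{R}^d)=0$ forces $\nu^s=0$ if $\theta>0$ (i.e.\ $m<m_c$) and $\nu^s$ supported at the origin if $\theta=0$, in which case the mass constraint $\int f_\nu\,\dd v+\nu^s(\mathbb{R}^d)=m$ together with $\int f_\nu\,\dd v=\int f_{\infty,0}\,\dd v=m_c$ forces $\nu^s=(m-m_c)^+\delta_0=\mu^s_{\min}$; hence $\nu=\mu_{\min}$. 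The only genuinely delicate points in writing this out fully are: (a) pinning down the meaning of $\mathcal{H}$ on measures — the entropy part must ignore singular components, consistently with the recession function $\lim_{s\to\infty}\Phi(s)/s=0$, whereas the potential part sees them and vanishes only at $v=0$, which is exactly what renders a Dirac at the origin cost-free (and is the reason a direct-method proof would need the more delicate lower-semicontinuity machinery for non-superlinear, unbounded-below integrands); and (b) the integrability bookkeeping in the core estimate, where the finite second moment built into the definition of $\mathcal{H}$ does all the work. I expect (a) to be the main obstacle; the remaining steps are routine.
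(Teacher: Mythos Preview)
The paper does not prove this theorem; it is quoted verbatim from~\cite{BGT_2011} (see the text immediately preceding the statement). There is therefore no ``paper's own proof'' to compare against. That said, your argument is essentially correct and self-contained, so let me comment on it directly.

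Your strategy---verify minimality of the explicit candidate via the pointwise convexity inequality $\Phi(f_\nu)-\Phi(f_{\infty,\theta})\ge\Phi'(f_{\infty,\theta})(f_\nu-f_{\infty,\theta})$, exploit the Euler--Lagrange identity $\Phi'(f_{\infty,\theta})=-\tfrac12|v|^2-\theta$, and then read off uniqueness from strict convexity---is clean and avoids any appeal to lower semicontinuity or compactness. The key algebraic identity and the case analysis for the singular parts are correct. Two small points are worth tightening: (i) the integrability claim ``$|\Phi(f)|\lesssim f+f|\log f|+(\log f)^+$'' is not quite right as written (there is no bare $(\log f)^+$ term; for small $f$ one has $|\Phi(f)|\lesssim f(1+|\log f|)$, for large $f$ sublinearity gives $|\Phi(f)|\lesssim f$), but the conclusion $\int|\Phi(f_\nu)|<\infty$ for densities of finite mass and second moment is correct by the standard splitting $\{f_\nu\ge e^{-|v|^2}\}\cup\{f_\nu<e^{-|v|^2}\}$; (ii) when invoking strict convexity for uniqueness you should note that equality in the integrated inequality forces equality a.e.\ in the pointwise inequality (this needs the integrands on both sides to be in $L^1$, which you have established). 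Your point~(a) about the Demengel--Temam extension is exactly right and matches the paper's convention (cf.\ the reference to~\cite{DT_1984} in the introduction).
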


For general $\gamma\ge 1$, the $L^1$-supercritical regime as determined by a dimensional analysis is given by $d-\frac{2}{\gamma}>0$.
Observe that this is exactly the regime, where the \textit{critical mass} $m_c$ appearing in the above theorem is finite
 and where minimisers with singular parts concentrated at velocity zero emerge. 
 Such singular components are termed Bose--Einstein condensates in the physics literature (at least when $\gamma=1$).

Let us now put the analysis of the present work into context with existing literature and 
 discuss the main new difficulties. 
 Naturally, several aspects of our approach have their roots in the work~\cite{CHR_2020}. 
This is particularly true for the fact that our fundamental a priori bound 
consists in a space-uniform \textit{temporal} Lipschitz estimate (of an integral quantity)
that is propagated in time. Both, in~\cite{CHR_2020} and in the present paper, such estimates are derived by means of suitable comparison principles.
However, the approach in~\cite{CHR_2020} relies on a Lagrangian reformulation of the problem in terms of the pseudo-inverse cumulative distribution function giving access to the powerful instrument of viscosity solution theory~\cite{CIL_1992}. While in higher dimensions such a reformulation is, in principle, still possible~\cite{CHW_2020,CRW_2016,ESG_2005}, the structural properties of the resulting problem greatly deteriorate. Indeed, in higher space dimensions,  Lagrangian coordinates are vectorial and the corresponding reformulation leads to a quasilinear degenerate second-order \textit{system} of equations (cf.~\cite[Section~2.1.3]{CHW_2020}). In such situations, classical comparison techniques are rarely available. 
Even in the isotropic case, where a 1D scalar problem can be obtained for the inverse of a rescaled radial cumulative distribution function~\cite[Section~2.1.2]{CHW_2020},
the comparison technique in~\cite{CHR_2020} 
does not directly generalise to higher dimensions, since for $d>1$ the second-order differential operator in the new variables has an explicit dependence
on the unknown (see the comments following~(7.3) in~\cite{hopf_PhDThesis_2019}). 

The new challenges we encounter in higher dimensions are thus mainly of a technical nature. 
Especially the derivation of the universal space profile at $\{v=0\}$ for unbounded densities in Section~\ref{sec:profile} (applying to isotropic flows) is significantly more delicate than in the 1D case and requires several intermediate steps. 
Determining the profile at the \textit{first} blow-up time is still quite feasible and, as in the 1D case, amounts to solving an ordinary differential equation---in higher dimensions to be combined with a bootstrap argument.
However, in~\eqref{eq:fpn} solutions may regularise after a first blow-up, and such successions of \enquote{blow-ups} and \enquote{blow-downs} could in principle be highly oscillatory. Thus, for a global-in-time analysis a particular challenge lies in gaining information at  general points in time.
We should emphasize that the space profile, while of interest in its own right, encodes a certain time-uniform continuity-at-infinity property that appears to be vital for proving relaxation to the minimiser $\mu_{\min}$ in the long-time limit. (Observe that when only looking at the equation~\eqref{eq:fpn} from a PDE point of view, other stationary \enquote{solutions} consisting of a smooth steady state $f_{\infty,\theta}$ for some $\theta>0$ plus a suitably weighted non-trivial Dirac measure at zero are conceivable, though unphysical.)
Let us finally point out that, in contrast to~\cite{CHR_2020} where the mass of the condensate component (i.e.\ of the singular part of the measure solution, which turns out to be supported in $\{v=0\}$) has only been shown to be a continuous function of time, the present approach allows us to infer Lipschitz continuity in the isotropic case and thus refines~\cite{CHR_2020} (cf.~\cite{hopf_PhDThesis_2019}).
Some of the basic ideas of this manuscript have been sketched for the 1D model in the author's PhD Thesis~\cite[Chapter~5]{hopf_PhDThesis_2019}. As indicated in Chapter~5.3 of~\cite{hopf_PhDThesis_2019}, when $d=1$, the solutions to be constructed below coincide with those obtained from the viscosity solution approach in~\cite{CHR_2020}.

	\subsection{Main results}\label{ssec:main.results}
	
	In the subsequent analysis, unless specified otherwise, we assume the following general hypotheses:
	
	\medskip
	\begin{enumerate}[label=(H\arabic*)]
		\item\label{it:HPspc} $L^1$-supercriticality: $\frac{\gamma d}{2}>1$, where 
		$\gamma\in[1,\infty)$, $d\in\mathbb{N}_+$ are fixed parameters.
			\item\label{it:HP.init} Initial data: 
			\begin{itemize}[label=$\ast$]
				\item $\fin\ge0$ a.e.\ in $\mathbb{R}^d$.
		\item  Either $\fin\in (L^\infty_d\cap L^1_{2d+1})(\mathbb{R}^d)$ and $\fin$ is  isotropic,
				\\	or $\fin\in (L^\infty_\ell\cap L^1_{\ell+d+1})(\mathbb{R}^d)$ 
				 for some $\ell>3d+1$
				 is (possibly) anisotropic.
\end{itemize}
	\end{enumerate}
The spaces $L^p_\ell(\mathbb{R}^d)$ in~\ref{it:HP.init} are weighted  $L^p$ spaces with norm $\|f\|_{L^p_\ell}:=\|(1+|\cdot|^\ell)f\|_{L^p(\mathbb{R}^d)}$, see Section~\ref{ssec:notations}. 

 For the asymptotic analysis leading to Theorem~\ref{thm:profile.r} and all subsequent main results,  we further impose the hypothesis $\tfrac{2}{\gamma}+2-d>0$ (cf.~\eqref{eq:regime1}), which covers the most interesting case $\gamma=1$, $d=3$. For more details on this restriction, we refer to Remark~\ref{rem:regime1}.
		\medskip
	
	Our results for the nonlinear Fokker--Planck equations~\eqref{eq:fpn} rely on a careful analysis of the proposed approximation scheme, which is devised in such a way as to preserve the Fokker--Planck-type  gradient-flow structure~\eqref{eq:gradflow}. 
	Approximation schemes for continuation beyond blow-up have been employed in the literature for various other PDE problems. Closest to the present situation are perhaps the constructions in~\cite{LSV_2012,Velazquez_2004} for the 2D Patlak--Keller--Segel model.

\smallskip
\paragraph{\bf Approximation scheme.}\label{p:scheme}
Pick an even function $\eta\in C^{0,1}(\mathbb{R})\cap C^\infty(\mathbb{R}\setminus\{0\})$ that satisfies $\eta(s)=\eta(-s)$ for all $s\in\mathbb{R}$, 
 $\eta(s)=s^\gamma$ for $s\in[0,1]$, $\eta'(s)=0$ for $s\ge 2$, 
and which is further such that
$(0,\infty)\ni s\mapsto \frac{\eta(s)}{s^\gamma}$ is non-increasing.
For $\ve\in(0,1]$ we let $\eta_\ve(s):=\ve^{-\gamma}\eta(\ve s)$ and 
\begin{align}
	\begin{aligned}\label{eq:555}
		h_\ve(s)&:=s(1+\eta_\ve(s))
		\\&\;=:s+\vartheta_\ve(s),\quad \text{where }\vartheta_\ve(s):=s\eta_\ve(s).
	\end{aligned}
\end{align}
Note that the choice of $\eta$ implies that $h_\ve(s)\le h_{\ve'}(s)\le h(s)$ for all $s\ge0$ and $0<\ve'\le \ve\le 1$.
We then consider the associated Cauchy problem
\begin{align}\label{eq:fpnreg}\tag{\fpnreg}
	\begin{aligned}
		\partial_tf_\ve&=\nabla\cdot\left(\nabla f_\ve+vh_\ve(f_\ve)\right),&& t>0, v\in \mathbb{R}^d,
		\\f_\ve(0,v)&=\fin(v)\ge0, &&v\in \mathbb{R}^d.
	\end{aligned}
\end{align}
For details on the variational structure of~\eqref{eq:fpnreg} we refer to Section~\ref{sec:renorm}.
The global existence of non-negative mild solutions of~\eqref{eq:fpnreg} for suitably regular data can be  deduced using the linear growth of $h_\ve$ at infinity in conjunction with the fact that Fokker--Planck equations like~\eqref{eq:fpnreg} (and~\eqref{eq:fpn}) propagate moments of order higher than $2$ (cf.\ Prop.~\ref{prop:ex.reg} below). 
The relatively strong decay hypotheses in~\ref{it:HP.init} are primarily needed to establish estimates that are independent of $\ve$ (cf.\ Prop.~\ref{prop:cpbound}).
The notations $\mathcal{M}_+(\mathbb{R}^d), \mathcal{L}^d$ and further conventions used in the following proposition are specified in Section~\ref{ssec:notations}.

\begin{proposition}[Limiting measure for~\eqref{eq:fpn}]\label{prop:lim}
Suppose~\ref{it:HPspc}, \ref{it:HP.init}.
	Then there exists a non-negative Radon measure $\mu$ on $[0,\infty)\times\mathbb{R}^d$
	with the following properties:
	 \begin{enumerate}[label=\textup{(\roman*)}]
	 	\item\label{it:curve} Mass-conserving curve: 
	 	$\mu$ can be represented as $\dd\mu=\dd\mu_t\dd t$ for a family of measures $\{\mu_t\}_{t\ge0}\subset\mathcal{M}_+(\mathbb{R}^d)$ with the property that 
	 	$t\mapsto\mu_t$ is a weakly-$\ast$ continuous curve in $\mathcal{M}_+(\mathbb{R}^d)$
	 	that satisfies $\mu_t(\mathbb{R}^d)=\|\fin\|_{L^1}=:m$ for all $t\ge0$ and admits a decomposition according to~\ref{it:dec}.
	 	\item\label{it:dec} Decomposition: there exists a measurable function $\ptm:[0,\infty)\to[0,m]$ and
	 	a non-negative function $f\in  L^1_\loc([0,\infty)\times\mathbb{R}^d)\cap C^{1,2}((0,\infty)\times U),$ $U:=\mathbb{R}^d\setminus\{0\},$ such that for all $t\ge0$
	 	\begin{align}
	 		\mu_t = \ptm(t)\delta_0+f(t,\cdot)\mathcal{L}^d,
	 	\end{align}
 	where $\delta_0$ denotes the Dirac measure concentrated at the origin.\\[1mm]
The function $f$ is a classical solution of~\eqref{eq:fpn} in $(0,\infty)\times U$.
Moreover, $f$ is strictly positive in $(0,\infty)\times\mathbb{R}^d$ if $\|\fin\|_{L^1}>0$
in the sense that for all $K\subset\subset (0,\infty)\times\mathbb{R}^d$ there exists $c(K)>0$ such that 
$f_{|K}\ge c(K)$ a.e.\ in $K$. 
 	 	\item\label{it:approxProp}  Approximation property: denote by $f_\ve\in C([0,\infty);(L^\infty_1\cap L^1_3)(\mathbb{R}^d))$ the unique mild solution\footnote{The approximate solutions $f_\ve$  enjoy further regularity properties, which will be needed in the analysis; see Section~\ref{sec:ex} for details.} of~\eqref{eq:fpnreg} (cf.\ Sec.~\ref{ssec:mild}) and let $\mu^{(\ve)}=f_\ve\tvmeas_+$, where $\tvmeas_+$ denotes the $(1{+}d)$-dimensional Lebesgue  measure on $[0,\infty)\times\mathbb{R}^d$. \\
 	 	Then, along a subsequence $\ve\downarrow 0$
 	 	\begin{align}
 	 		&\mu^{(\ve)}\overset{\ast}{\rightharpoonup} \mu\text{ in }\mathcal{M}_+([0,T]\times\mathbb{R}^d)\quad\text{ for all }T<\infty,
 	 		\\&	f_\ve\to f \text{ in  }\;C^{1,2}_{\loc}((0,\infty)\times U),
 	 	\end{align}
  	where $U:=\mathbb{R}^d\setminus\{0\}$. 
  	\item\label{it:uniq.sola} Unique limit: if $\fin$ is isotropic, the convergence in~\ref{it:approxProp} is 
  	true along any sequence $\ve\downarrow0$. 
  	\item\label{it:lip.ptm} Lipschitz continuity of point mass: if $\fin$ is isotropic\footnote{In the anisotropic case, we will see in Section~\ref{sec:profile} that $t\mapsto\mu_t(\{0\})$ is at least continuous, see Cor.~\ref{cor:upper.ani}.}, the map $t\mapsto\mu_t(\{0\})$ is Lipschitz continuous.
	 \end{enumerate}
\end{proposition}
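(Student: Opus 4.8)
The plan is to obtain the measure $\mu$ as a subsequential weak-$\ast$ limit of the approximate solutions $f_\ve$ and then unpack the required structure in stages. First I would fix $T<\infty$ and collect the $\ve$-uniform estimates announced in the excerpt: mass conservation $\|f_\ve(t)\|_{L^1}=m$, propagation of a higher moment $\sup_{t\le T}\int|v|^{\ell'}f_\ve\,\dd v\le C(T)$ (so no mass escapes to spatial infinity), and—crucially—the uniform temporal-Lipschitz bound on an integral quantity coming from the comparison principle (Prop.~\ref{prop:cpbound}). Mass conservation plus the moment bound makes $\{f_\ve\mathcal{L}^{1+d}\}$ tight on $[0,T]\times\mathbb{R}^d$, hence weakly-$\ast$ relatively compact; a diagonal extraction over $T\uparrow\infty$ produces the limiting Radon measure $\mu$ on $[0,\infty)\times\mathbb{R}^d$ and, passing to the limit in the weak formulation of~\eqref{eq:fpnreg} (using $h_\ve\uparrow h$), shows $\mu$ solves~\eqref{eq:fpn} in the distributional sense. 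The temporal-Lipschitz estimate is what upgrades the bare limit measure to a genuine curve: it forces the disintegration $\dd\mu=\dd\mu_t\,\dd t$ with $t\mapsto\mu_t$ weakly-$\ast$ continuous (indeed Lipschitz for the tested integral quantity) and $\mu_t(\mathbb{R}^d)=m$ for every $t$, giving~\ref{it:curve}.

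For~\ref{it:dec}, I would exploit that $h_\ve$ is uniformly bounded and smooth \emph{away from the origin} together with interior parabolic regularity: on any $K\subset\subset(0,\infty)\times U$ the $f_\ve$ enjoy $\ve$-uniform $C^{1,2}$ bounds (Schauder / De Giorgi–Nash–Moser applied to the uniformly parabolic, locally bounded operator $\partial_t - \Delta - v\cdot\nabla h_\ve(\cdot)$), so along the subsequence $f_\ve\to f$ in $C^{1,2}_{\loc}((0,\infty)\times U)$ with $f$ a classical solution of~\eqref{eq:fpn} there. Consequently the absolutely continuous part of $\mu_t$ on $U$ is exactly $f(t,\cdot)\mathcal{L}^d$, and any singular mass of $\mu_t$ must sit on $\{0\}$ (the only place where the estimates degenerate). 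To see there is no singular part on $U$ and no diffuse singular mass accumulating at the origin in a spread-out way, I would combine the moment bound (controls tails) with the local $L^\infty_{\loc}$ bound away from zero; what remains is a possible atom $a(t)\delta_0$, with $a(t)=\mu_t(\{0\})\in[0,m]$, and measurability of $a$ follows from weak-$\ast$ continuity of $t\mapsto\mu_t$. Strict positivity of $f$ on compacts $K\subset\subset(0,\infty)\times\mathbb{R}^d$ is a strong maximum principle / parabolic Harnack argument for the classical solution (on $U$) combined with a barrier near $\{v=0\}$; if $m>0$ the density cannot vanish identically, so Harnack propagates positivity. This gives~\ref{it:dec}, and~\ref{it:approxProp} is then just a restatement of the two modes of convergence already established along the chosen subsequence.

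The genuinely new parts are~\ref{it:uniq.sola} and~\ref{it:lip.ptm}, which I expect to be the main obstacle and which rely on the isotropic reduction. In the isotropic case one can pass to the radial variable and work with (a rescaled) radial cumulative distribution function $F_\ve(t,r):=\int_{\{|v|\le r\}}f_\ve(t,\cdot)\,\dd v$ or its pseudo-inverse; the key point is that in this 1D scalar formulation one has a \emph{comparison principle uniform in $\ve$}, so two approximating families with $\ve>\ve'$ can be ordered, and more importantly the limit is characterised intrinsically (e.g.\ as the unique viscosity/entropy solution of the limiting scalar problem, matching~\cite{CHR_2020} when $d=1$). Uniqueness of that limiting object forces the \emph{whole} sequence $\ve\downarrow0$ to converge, not just a subsequence, giving~\ref{it:uniq.sola}. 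For~\ref{it:lip.ptm}, note $a(t)=\mu_t(\{0\})=\lim_{r\downarrow0}(m-\lim_\ve F_\ve(t,r))$ in the isotropic limit; the $\ve$- and $r$-uniform temporal-Lipschitz control on $F_\ve(\cdot,r)$ (again from the comparison principle, this is precisely the improvement over~\cite{CHR_2020}) passes to the limit and yields $|a(t)-a(s)|\le L|t-s|$. The delicate point here is interchanging the limits $\ve\downarrow0$ and $r\downarrow0$ while keeping the Lipschitz constant uniform; this is where the continuity-at-infinity / space-profile information from Section~\ref{sec:profile} is really used, and I would expect the rigorous justification of this double limit to be the crux of the argument.
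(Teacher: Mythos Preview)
Your overall skeleton (tightness, parabolic regularity away from the origin, Arzel\`a--Ascoli) is right, but there are several genuine misidentifications of mechanism.

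\textbf{Curve structure and decomposition.} You attribute the disintegration $\dd\mu=\dd\mu_t\,\dd t$ and the weak-$\ast$ continuity of $t\mapsto\mu_t$ to the temporal Lipschitz estimate. This cannot be right: that estimate (Prop.~\ref{prop:cpbound}) is only available in the isotropic case, whereas~\ref{it:curve}--\ref{it:dec} hold for anisotropic data as well. The paper instead obtains $\mu_t$ \emph{directly} for each fixed $t$: the locally uniform convergence $f_\ve\to f$ on $(0,\infty)\times U$ together with the moment bound gives $\|f_\ve(t)-f(t)\|_{L^1(\mathbb{R}^d\setminus B_\rho)}\to0$ for every $\rho>0$, so $\mu_t^{(\ve)}\weakstar\mu_t$ with $\mu_t=a(t)\delta_0+f(t)\mathcal{L}^d$ and $a(t)=m-\|f(t)\|_{L^1}$ determined by mass conservation. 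Weak-$\ast$ continuity is then checked via Portmanteau using the smoothness of $f$ away from the origin and Fatou. Also, your claim that $\mu$ is a distributional solution of~\eqref{eq:fpn} is false in general (the nonlinearity $h(f)$ is too strong near the singularity); this is precisely why the paper passes to renormalised solutions.

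\textbf{Uniqueness~\ref{it:uniq.sola}.} Your proposed route---characterise the limit intrinsically as a viscosity/entropy solution of the 1D radial problem and invoke uniqueness---is exactly the approach the paper's introduction explains \emph{does not extend} to $d>1$ (the reformulated equation acquires an explicit dependence on the unknown that breaks the comparison structure). The actual argument is much more elementary: the choice of cutoff ensures $h_\ve\le h_{\ve'}$ for $\ve'\le\ve$, and a comparison argument at the level of the partial mass equation (Prop.~\ref{prop:mon.scheme}) gives $M_{\ve'}\ge M_\ve$. Monotonicity in $\ve$ then forces $\lim_{\ve\to0}M_\ve(t,r)$ to exist for every $(t,r)$, which determines $f$ and hence $\mu$ uniquely.

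\textbf{Lipschitz~\ref{it:lip.ptm}.} You over-complicate this. There is no delicate interchange of limits and no need for the space-profile results of Section~\ref{sec:profile}. The bound $|M_\ve(t,r)-M_\ve(s,r)|\le K_*|t-s|$ is uniform in \emph{both} $\ve$ and $r$. Send $\ve\to0$ first (using $c_dM_\ve(t,r)\to\mu_t(B_r)$), then send $r\to0$ (using $\mu_t(B_r)\to a(t)$, which holds simply because $f(t)\in L^1$). The Lipschitz constant survives both limits trivially.

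Finally, for strict positivity the paper does not use a Harnack inequality for the limiting $f$ (which is only classical on $U$); it works at the approximate level, comparing $f_\ve$ from below with the global solution emanating from $\min\{f_{\infty,\theta},\fin\}$, which stays bounded by $f_{\infty,\theta}$ and hence is $\ve$-independent and strictly positive by the classical strong maximum principle.
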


See Section~\ref{ssec:passlim} for the proof of Proposition~\ref{prop:lim}.
Later on we show for the isotropic case that the limiting measure $\mu$ satisfies~\eqref{eq:fpn} is the sense of renormalised solutions.
One of the technical difficulties of problem~\eqref{eq:fpn} is related to the fact that the function $t\mapsto\mu_t(\{0\})$ in general fails to be monotonic (cf. Sec.~\ref{ssec:dynprop}).

Proposition~\ref{prop:lim}~\ref{it:dec} implies that $\supp\mu_t^{\mathrm{sing}}\subset\{v=0\}$. 
Hence, recalling the sublinearity of $\Phi(s)$ as $s\to\infty$, we infer that for every $t\ge0$
\begin{align}
	\mathcal{H}(\mu_t)= \mathcal{H}(f(t)).
\end{align}
Since all relevant measures in this work will have singular parts supported at the origin, we (continue to) denote by the symbol $\mathcal{H}$ both the functional acting on densities as well as the extended functional acting on non-negative finite measures.

The following result provides a sharp characterisation of the space profile near the origin of isotropic solutions, and moreover it is a key ingredient for uniquely identifying the long-time asymptotic limit. It will be established in Section~\ref{sec:profile}.

\begin{theorem}[Universal space profile]\label{thm:profile.r}
	In addition to~\ref{it:HPspc}, \ref{it:HP.init}	suppose that
			\begin{align}\label{eq:regime1}
	\tfrac{2}{\gamma}+2-d>0.
		\end{align} 
	Further assume that the initial value $f_\mathrm{in}$ is isotropic and let $g(t,|v|):=f(t,v)$, where $f$ denotes the density of the regular part of the limiting measure obtained in Proposition~\ref{prop:lim}.
There exists $r_*\in(0,1]$ and a bounded function 
$A\in C_b((0,\infty)\times (0,1))$
 such that 
	for each $\hat t>0$ 
	\emph{either}
	$g(\hat t,\cdot)\in L^\infty(0,1)$ 
	 and there exists a neighbourhood $J_{\hat t}\subset(0,\infty)$ of $\hat t$ 
	such that $f_{|J_{\hat t}\times B_1}$ is smooth
		\emph{or}
	\begin{align}\label{eq:profile*}
	g(\hat t,r)= g_c(r) +A(\hat t,r)r^{2-d} \qquad\text{ for }\; r\in(0,r_*),\hspace{-2cm}
	\end{align}
where $g_c(|v|):=f_c(v)=f_{\infty,0}(v)$ (cf.~\eqref{eq:den}), i.e.\ 
$g_c(r)=(\Phi')^{-1}(-\frac{1}{2}r^2)$. \\
The radius $r_*\,{>}\,0$ and the function $A\,{\in} \,C_b((0,\infty){\times} (0,1))$ can be chosen in such a way that 
\begin{align}\label{eq:thm.profile.r.upper.bd}
	g(\hat t,r)\le g_c(r) +A(\hat t,r)r^{2-d}\text{ for all }\; r\in(0,r_*)\text{ and all }\hat t\in(0,\infty).
\end{align}
For all $\hat t>0$ satisfying $\mu_{\hat t}(\{0\})>0$ the second option, i.e.~\eqref{eq:profile*}, must hold true.
\end{theorem}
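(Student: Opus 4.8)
The plan is to analyse the regular part $g(\hat t,\cdot)$ of the limiting measure by passing to the radial variable and comparing $g$ with the stationary profile $g_c$ near the origin, exploiting the fact that $g_c$ solves the stationary equation $\nabla\cdot(\nabla g_c + v\, h(g_c))=0$, i.e.\ in radial coordinates $\partial_r g_c = -r\,h(g_c)$, which is consistent with the singularity $g_c(r)\sim c_\gamma r^{-2/\gamma}$ as $r\downarrow 0$ (this is the borderline integrable singularity thanks to \ref{it:HPspc}). Write $w(t,r):=g(t,r)-g_c(r)$. For a fixed time $\hat t$ at which $g(\hat t,\cdot)$ is unbounded, the function $f$ is a classical solution of \eqref{eq:fpn} on $(0,\infty)\times U$ by Proposition~\ref{prop:lim}\ref{it:dec}, so $g$ solves the radial PDE on $r>0$; the idea is to freeze time and treat the elliptic operator applied to $w(\hat t,\cdot)$ near $r=0$, using the already-established time-uniform a priori bounds from the approximation scheme (the temporal Lipschitz estimate of an integral quantity, propagated in time) to control the time-derivative term $\partial_t f$ as an inhomogeneity. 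The first main step is therefore to derive, from those bounds, a pointwise or integral control on $r^{d-1}\partial_r\!\big(r^{d-1}(\partial_r g + r\,h(g))\big)$ near the origin that is uniform in $\hat t$; since $h(g)-h(g_c)$ is, to leading order, controlled by $g-g_c$ times $h'(g_c)\sim r^{-2}$ (as $\gamma\ge 1$, $h'(s)\sim s^\gamma$), the linearised operator acting on $w$ has a potential term of critical order $r^{-2}$.

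The second step is the ODE/bootstrap analysis alluded to in the introduction. Integrating the flux identity twice from $0$ outward (using that mass is finite and $g_c$ carries the non-integrable-free leading behaviour, so the flux of $w$ through small spheres vanishes appropriately), one obtains an integral equation for $w(\hat t,r)$ of the form $w(\hat t,r) = r^{2-d}\cdot(\text{bounded, continuous in }(\hat t,r))$, which is exactly \eqref{eq:profile*} with $A(\hat t,r)=r^{d-2}w(\hat t,r)$. Concretely: since the homogeneous solutions of the leading part of the linearised radial operator $r^{1-d}\partial_r(r^{d-1}\partial_r\cdot)$ behave like constants and like $r^{2-d}$, and the forcing ($\partial_t f$ plus lower-order drift terms) is integrable against the Green's kernel near $0$ provided \eqref{eq:regime1} holds, a Duhamel/variation-of-parameters representation yields a bounded remainder. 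Here is where the restriction $\tfrac{2}{\gamma}+2-d>0$ enters: it guarantees that the product of the singular potential $h'(g_c)\sim r^{-2}$ with the corrector $r^{2-d}$ is more integrable than $r^{-2}$ near the origin, so the fixed-point/bootstrap that upgrades a crude bound on $w$ to the sharp $r^{2-d}$ rate actually closes rather than diverging; one starts from a weak a priori bound on $g$ (e.g.\ from local parabolic regularity away from $0$ plus the integral bound) and iterates, each step improving the exponent until it saturates at $2-d$.

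For the uniform upper bound \eqref{eq:thm.profile.r.upper.bd} I would use a comparison/maximum-principle argument at the level of the approximate solutions $f_\ve$: construct a supersolution of \eqref{eq:fpnreg} of the form $g_c(r) + C r^{2-d}$ (suitably cut off and made bounded, using $h_\ve\le h$), valid on a small ball $B_{r_*}$ for $C$ large, with the boundary data on $\partial B_{r_*}$ and the initial data dominated using \ref{it:HP.init}; then pass to the limit $\ve\downarrow 0$ invoking Proposition~\ref{prop:lim}\ref{it:approxProp}. The dichotomy itself (either $g(\hat t,\cdot)$ is locally bounded with $f$ smooth near $(\hat t,0)$, or \eqref{eq:profile*} holds) follows because, once $w(\hat t,\cdot)$ is bounded by $Cr^{2-d}$, either $w$ stays bounded near $\hat t$ — in which case $g$ is bounded and standard parabolic regularity (the equation being uniformly parabolic where $g$ is bounded) gives smoothness on a space-time neighbourhood — or $g$ is unbounded and the representation forces the precise profile. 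Finally, the last assertion — that $\mu_{\hat t}(\{0\})>0$ implies the second option — follows since a positive point mass at the origin is incompatible with $g(\hat t,\cdot)\in L^\infty$ near $0$: a bounded density adjacent to a genuine Dirac would violate the weak-$\ast$ continuity of $t\mapsto\mu_t$ combined with the structure forced on approximate solutions (one cannot create an atom at a time when the density is locally bounded without a concentration mechanism, which is precisely the blow-up encoded by \eqref{eq:profile*}); more directly, the renormalised formulation satisfied by $\mu$ together with the sharp profile shows the condensate can only be fed through an unbounded density.

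\textbf{Main obstacle.} The hardest part will be making the ``frozen-time'' elliptic analysis rigorous \emph{uniformly in} $\hat t$, i.e.\ controlling $\partial_t f$ near the singular set at arbitrary times — not just at the first blow-up, where a clean ODE suffices. Because solutions may undergo repeated, possibly oscillatory, blow-up/blow-down, one cannot rely on monotonicity of $t\mapsto\mu_t(\{0\})$; the time-derivative term must be absorbed using only the propagated space-uniform temporal Lipschitz bound, and the bootstrap must be carried out with constants that do not degenerate as $\hat t$ varies. Keeping the corrector $A$ in $C_b((0,\infty)\times(0,1))$ — in particular its joint continuity in $(\hat t,r)$ across times where $g$ transitions between bounded and unbounded — is the delicate technical point and is presumably why Section~\ref{sec:profile} requires several intermediate steps.
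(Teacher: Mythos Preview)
Your outline captures the role of the temporal Lipschitz bound on the partial mass and correctly flags the uniform-in-$\hat t$ control as the crux, but the strategy you propose for the \emph{lower bound} has a genuine gap that the paper closes by a different mechanism.

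The variation-of-parameters/bootstrap you sketch for $w=g-g_c$ can at best deliver $|w|\le C r^{2-d}$ \emph{once you already know $g$ is comparable to $g_c$ from below}. At a general time $\hat t$ there is no a priori reason why an unbounded $g(\hat t,\cdot)$ should be close to $g_c$ near the origin: $g$ could in principle carry a subcritical spike ($g\sim r^{-\alpha}$ for some $\alpha<2/\gamma$) or oscillate between power laws. Your linearisation $h(g)-h(g_c)\approx h'(g_c)\,w$ breaks down precisely in such regimes, since the remainder is not controlled by $w$ alone. The sentence ``once $w$ is bounded by $Cr^{2-d}$, either $w$ stays bounded or the representation forces the precise profile'' begs the question: the Duhamel representation you derive is informative only after you know $g\gtrsim g_c$ to leading order, and nothing in your argument establishes that. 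This is exactly the difficulty singled out in Remark~\ref{rem:lb}.

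The paper's route avoids this circularity and is structurally different. It works not with $w$ but with $\Phi'(g_\ve)$, which satisfies a first-order ODE in $r$ with right-hand side controlled by $|\partial_t M_\ve|\le K_*$; the resulting lower-bound lemma (Prop.~\ref{prop:profile.lb}) is proved not only for the critical exponent $\alpha_c=2/\gamma$ but \emph{uniformly for a range of subcritical exponents} $\alpha\in[\underline\alpha,\alpha_c]$, starting from any touching point where $g_\ve=c_\gamma r^{-\alpha}$. This is paired with a separate \emph{instantaneous regularisation} result (Prop.~\ref{prop:ins.subcr}) obtained from semi-group estimates: if the approximate solutions are dominated by a fixed subcritical power law on a short interval before $\hat t$, they are smoothed out and $f$ is regular near $(\hat t,0)$. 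The dichotomy is then sharp: either such a subcritical domination holds on $[\hat t_0,\hat t]$ (Case~1 $\Rightarrow$ smoothness), or its negation manufactures sequences $\alpha_j\uparrow\alpha_c$, $t_j\uparrow\hat t$, $r_j\downarrow0$, $\ve_j\downarrow0$ with $g_{\ve_j}(t_j,r_j)\ge c_\gamma r_j^{-\alpha_j}$, and the subcritical lower-bound lemma applied at each $j$ yields, in the limit, $g(\hat t,r)\ge g_c(r)-Cr^{2-d}$. Your proposal contains neither ingredient---stability-from-below for \emph{subcritical} power laws, nor smoothing of subcritical singularities---and without them the dichotomy does not close. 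Two smaller points: the paper's upper bound (Prop.~\ref{prop:upperBd}) is also obtained via the $\Phi'(g_\ve)$-ODE rather than by a supersolution of the form $g_c+Cr^{2-d}$; and your final argument invokes the renormalised formulation, which in the paper is established \emph{using} Theorem~\ref{thm:profile.r}, so appealing to it here would be circular.
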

See Section~\ref{ssec:profile.proof} for the proof of Theorem~\ref{thm:profile.r}.
The main challenge is to show the lower bound $g(\hat t,r)\ge g_c(r) +A(\hat t,r)r^{2-d}$, $r\in(0,r_*)$, encoded in~\eqref{eq:profile*}.
For its proof we combine different techniques:
based on the global temporal Lipschitz continuity of the partial mass function, 
we first establish a partial result on the \enquote{stability from below} of the unbounded steady state~$f_c$ by employing a bootstrap argument that bears some elements of classical intersection comparison~\cite{Galaktionov_2004,GV_2004}.
This step strongly relies on the radial symmetry assumption. It allows to infer~\eqref{eq:profile*} at times~$\hat t$ where $\mu_{\hat t}(\{0\})>0$, for instance. The full characterisation in Theorem~\ref{thm:profile.r} is only achieved upon a combination with specially tailored semi-group estimates for mild solutions along with a contradiction-type argument.
 We refer to Remark~\ref{rem:lb} for more details. 
 The upper bound~\eqref{eq:thm.profile.r.upper.bd} 
 also holds in the anisotropic case (see Remark~\ref{rem:ani} and Corollary~\ref{cor:upper.ani}). 

We further note that $g_c(r)=\big(\tfrac{2}{\gamma}\Big)^\frac{1}{\gamma}r^{-\frac{2}{\gamma}}+O(r^{-\frac{2}{\gamma}+2})$ for $0<r\ll1$, so that the remainder $O(r^{2-d})$ in~\eqref{eq:profile*} is indeed of lower order under condition~\eqref{eq:regime1}. Moreover, in the expansion for $g$ one can replace the limiting steady state $g_c(r)$ by the power law $\big(\tfrac{2}{\gamma}\Big)^\frac{1}{\gamma}r^{-\frac{2}{\gamma}}$ since $d>\frac{2}{\gamma}$.
\begin{remark}[The regime~\eqref{eq:regime1}]\label{rem:regime1}
In the present work, we focus on the range~$\tfrac{2}{\gamma}+2-d>0$ as it covers the most interesting case of the 3D Kaniadakis--Quarati model for bosons ($\gamma=1$, $d=3$).  If  $\tfrac{2}{\gamma}+2-d<0$, 
the flux into the origin associated with the nonlinear drift term $\divv(vh(f_c))$ induced by the unbounded steady state $f_c$ vanishes in the sense that $\lim_{r\downarrow0}\int_{\partial B_r(0)}h(f_c)v{\cdot}\nu\,\dd\mathcal{H}^{d-1}=0$. Here, $\mathcal{H}^{d-1}$ denotes the $(d{-}1)$-dimensional Hausdorff measure and $\nu$ the outer unit normal to $\partial B_r(0)$, so that $v{\cdot}\nu=r$ for $v\in\partial B_r(0)$.
Heuristically, this suggests that an upper bound of order $f_c$ on the space profile of the density $f(t,\cdot)$ near zero (as asserted in Theorem~\ref{thm:profile.r} for regime~\eqref{eq:regime1}) might not be compatible with the formation of a point mass at the origin in the case $\tfrac{2}{\gamma}+2-d<0$.
 In view of Theorem~\ref{thm:long-time}, asserting in particular the formation of a Dirac mass  in finite time for mass-supercritical data, we conjecture that some new phenomena 
may be encountered when~\eqref{eq:regime1} is violated.
\end{remark}

Owing to the strong nonlinearity in the drift one cannot expect the limiting density~$f$ to be a distributional solution of~\eqref{eq:fpn} in $(0,\infty)\times\mathbb{R}^d$. 
Our analysis leading to Theorem~\ref{thm:profile.r} allows to show that the limiting measure satisfies~\eqref{eq:fpn} in the sense of renormalised solutions.
\begin{definition}[Renormalised solution of~\eqref{eq:fpn}]\label{def:renorm}
	Let $\mu$ be a non-negative Radon measure on $[0,\infty)\times\mathbb{R}^d$
	and denote by $\mu=\mu^\mathrm{reg}+\mu^\mathrm{sing}=f(t,v)\mathcal{L}^{1+d}_++\mu^\mathrm{sing}$
	its Lebesgue decomposition into regular part $\mu^\mathrm{reg}$ with density $f\in L^1_\loc([0,\infty)\times\mathbb{R}^d)$ and singular part $\mu^\mathrm{sing}$. 
	We call $\mu$ a renormalised solution of~\eqref{eq:fpn} in $(0,\infty)\times\mathbb{R}^d$ with initial data $\fin$ if $\dd\mu=\dd\mu_t\dd t$ for some weakly-$\ast$ continuous curve $[0,\infty)\ni t\mapsto\mu_t$ in $\mathcal{M}_+(\mathbb{R}^d)$ with preserved mass $\int\dd\mu_t\equiv\|\fin\|_{L^1(\mathbb{R}^d)}$, if
	$\mathcal{T}_k(f):=\min\{f,k\}\in L^2_\loc([0,\infty);H^1_\loc(\mathbb{R}^d))$ for every $k>0$,
	and if for all $\xi\in C^\infty([0,\infty))$ with compactly supported derivative $\xi'$, for a.a.\ $T\in(0,\infty)$ and all $\psi\in C^\infty_c([0,T]\times\dom):$
\begin{equation}\label{eq:renorm}
		\begin{multlined}
			\int_\dom \xi(f(T,\cdot))\psi(T,\cdot)\,\dd v-\int_\dom \xi(\fin)\psi(0,\cdot)\,\dd v
		-\int_0^T\!\!\int_\dom \xi(f)\partial_t\psi\,\dd v\dd t
	\\	=-\int_0^T\!\!\int_\dom (\nabla f+h(f)v)\cdot \nabla(\xi'(f)\psi)\,\dd v\dd t.
	\end{multlined}
\end{equation}
\end{definition}
As usual, the gradients of $f$ on the RHS of~\eqref{eq:renorm} are to be understood as 
$\nabla \mathcal{T}_k(f)$ for $k=k(\xi)$ large enough such that $\xi'(s)=0$ for $s\ge k$ (cf.~\cite{BBGGV_1995,DalMMOP_1999}).

Let us emphasise that the  above definition of renormalised solutions should be seen as preliminary.
For a `better' and more complete paradigm, the solution concept may have to be complemented by suitable \textit{energy} or \textit{entropy conditions} as it is classical for conservation laws and nonlinear elliptic/parabolic equations, see~\cite{Kr_1970,CDiFT_2016,BBGGV_1995,BM_1997}, where they are crucial for uniqueness. 
A general analysis of the question of uniqueness for~\eqref{eq:fpn} is, however, beyond the scope of the present manuscript and will be left for future research.

\begin{theorem}[The limit $\mu$ is a renormalised solution]\label{thm:renorm}
Assume the hypotheses of Theorem~\ref{thm:profile.r}. 
Then the limiting measure $\mu$ constructed 
in Proposition~\ref{prop:lim} satisfies~\eqref{eq:fpn} in the renormalised sense as specified in Definition~\ref{def:renorm}.
\end{theorem}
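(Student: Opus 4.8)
The plan is to pass to the limit $\ve\downarrow 0$ in a renormalised formulation of the regularised equation \eqref{eq:fpnreg}, using the convergences from Proposition~\ref{prop:lim} together with the uniform space-profile control of Theorem~\ref{thm:profile.r}. First I would record that $f_\ve$, being a sufficiently regular mild solution of \eqref{eq:fpnreg}, satisfies for every $\xi\in C^\infty([0,\infty))$ with $\xi'$ compactly supported and every $\psi\in C^\infty_c([0,T]\times\dom)$ the identity
\begin{equation*}
	\begin{multlined}
		\int_\dom \xi(f_\ve(T,\cdot))\psi(T,\cdot)\,\dd v-\int_\dom \xi(\fin)\psi(0,\cdot)\,\dd v
		-\int_0^T\!\!\int_\dom \xi(f_\ve)\partial_t\psi\,\dd v\dd t
		\\ =-\int_0^T\!\!\int_\dom (\nabla f_\ve+h_\ve(f_\ve)v)\cdot \nabla(\xi'(f_\ve)\psi)\,\dd v\dd t,
	\end{multlined}
\end{equation*}
which follows by multiplying \eqref{eq:fpnreg} by $\xi'(f_\ve)\psi$ and integrating by parts (the truncation $\xi'$ makes every term integrable and legitimises the chain rule, since $\nabla\xi'(f_\ve)=\xi''(f_\ve)\nabla f_\ve$ only involves $f_\ve$ on the bounded set $\{f_\ve\le k(\xi)\}$). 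Fix $k=k(\xi)$ with $\xi'(s)=\xi''(s)=0$ for $s\ge k$, so that all nonlinear quantities $\xi(f_\ve),\xi'(f_\ve)\psi,\xi''(f_\ve)\psi$ are bounded and the gradient terms only see $\mathcal{T}_k(f_\ve)$.

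The core of the argument is then a compactness step. From $f_\ve\to f$ in $C^{1,2}_{\loc}((0,\infty)\times U)$ one gets pointwise a.e.\ convergence $f_\ve\to f$ and $\nabla f_\ve\to\nabla f$ on $(0,\infty)\times U$; I would upgrade the gradient convergence to obtain $\mathcal{T}_k(f)\in L^2_\loc([0,\infty);H^1_\loc(\dom))$ and $\nabla\mathcal{T}_k(f_\ve)\rightharpoonup\nabla\mathcal{T}_k(f)$ weakly in $L^2_\loc$ by establishing a uniform (in $\ve$) bound on $\|\nabla\mathcal{T}_k(f_\ve)\|_{L^2_\loc}$. Such a bound comes from testing \eqref{eq:fpnreg} against $\Theta_k(f_\ve)\chi$, where $\Theta_k'=\mathbf 1_{\{s\le k\}}$ and $\chi$ is a space-time cutoff, giving $\int|\nabla\mathcal{T}_k(f_\ve)|^2\chi\lesssim \int \Theta_k(f_\ve)|v||\nabla\chi| + \int|\mathcal{T}_k(f_\ve) v\cdot\nabla\chi| + (\text{mass terms})$, all controlled uniformly via the conserved mass and the propagated moment bounds. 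Combined with pointwise a.e.\ convergence this yields weak $L^2_\loc$ convergence of the truncated gradients (the limit being $\nabla\mathcal{T}_k(f)$ by uniqueness of distributional limits), which handles the $\nabla f_\ve\cdot\nabla(\xi'(f_\ve)\psi)$ term (writing it as a product of a strongly convergent bounded factor $\xi''(f_\ve)\psi$, resp.\ $\xi'(f_\ve)\nabla\psi$, with the weakly convergent $\nabla\mathcal{T}_k(f_\ve)$). For the term $\int\xi(f_\ve)\partial_t\psi$ and the boundary terms at $t=0,T$, I would split $\dom=B_\rho\cup(\dom\setminus B_\rho)$: on $\dom\setminus B_\rho$ dominated convergence applies ($\xi$ bounded, $f_\ve\to f$ a.e., $\psi$ compactly supported); near the origin, since $\xi$ vanishes for $s\ge k$ and $\xi(s)\le \xi(0)+\|\xi'\|_\infty s$, the contribution is $\lesssim \int_{B_\rho}(1+f_\ve)\to 0$ as $\rho\to0$ uniformly in $\ve$, because $\mu^{(\ve)}\weakstar\mu$ controls $\int_{B_\rho}f_\ve\,\dd v\dd t$ up to the (small, by upper semicontinuity) mass $\mu([0,T]\times \overline{B_\rho})$; the a.e.\ selection of $T$ is used here exactly as in the definition. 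This is also where Theorem~\ref{thm:profile.r} enters: the upper bound \eqref{eq:thm.profile.r.upper.bd}, $f(t,v)\le g_c(|v|)+A\,|v|^{2-d}$ with $\frac{2}{\gamma}+2-d>0$, guarantees $f\in L^1_\loc$ near $\{v=0\}$ locally uniformly in time, so that $\xi(f)\in L^1_\loc$ and the limit identity makes sense; in fact it also shows $f(t,\cdot)$ itself is locally integrable so no extra mass is hidden in the ``density'' part.

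The drift term $\int_0^T\!\!\int_\dom h_\ve(f_\ve)v\cdot\nabla(\xi'(f_\ve)\psi)$ is the main obstacle and needs the most care. The integrand is supported in $\{f_\ve\le k\}$, on which $h_\ve(f_\ve)=f_\ve(1+\eta_\ve(f_\ve))\to h(f)$ pointwise a.e.\ (and $h_\ve(f_\ve)\le h(f_\ve)$, with $h$ continuous); on the bounded-density set it is dominated by $C_k(1+|v|)$ times $L^1_\loc$ factors, so away from the origin dominated convergence gives the passage to the limit against the strongly convergent factor $\xi''(f_\ve)\psi v$ and $\xi'(f_\ve)v\cdot\nabla\psi$. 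Near the origin one again uses that the whole expression is $\lesssim \int_{B_\rho}(1+f_\ve)|v|\,\dd v\dd t$, which is small uniformly in $\ve$ by the same mass/profile argument; crucially, because of the truncation $\xi'$, the genuinely singular part of the measure (the point mass $a(t)\delta_0$) never contributes — $\xi'(f_\ve)$ is evaluated where $f_\ve$ is large only if $k$ is large, but for the spatial region near $0$ we estimate crudely rather than identifying a limit, so the possible blow-up of $f_\ve$ at the origin is absorbed into an arbitrarily small ball in space together with the vanishing weight $|v|\le\rho$. Assembling all four terms and letting $\rho\downarrow0$ after $\ve\downarrow0$ yields \eqref{eq:renorm}; the structural prerequisites ($\dd\mu=\dd\mu_t\,\dd t$, weak-$\ast$ continuity, mass conservation, $\mathcal{T}_k(f)\in L^2_\loc H^1_\loc$) are furnished by Proposition~\ref{prop:lim} and the compactness step above, completing the proof.
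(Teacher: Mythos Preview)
Your plan has a genuine gap in the treatment of the quadratic gradient term. When you expand
\[
-\int_0^T\!\!\int_{\dom}(\nabla f_\ve+h_\ve(f_\ve)v)\cdot\nabla(\xi'(f_\ve)\psi)\,\dd v\dd t,
\]
one of the four resulting terms is $-\int\!\!\int \xi''(f_\ve)\,|\nabla \mathcal{T}_k(f_\ve)|^2\,\psi$. This is \emph{quadratic} in the truncated gradient, and the pairing ``strongly convergent bounded factor $\xi''(f_\ve)\psi$ times weakly convergent $\nabla\mathcal{T}_k(f_\ve)$'' is not how the term is structured: it is a strongly convergent scalar times the square of a weakly convergent sequence. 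Weak $L^2_\loc$ convergence of $\nabla\mathcal{T}_k(f_\ve)$ does not allow you to pass to the limit here; you would need strong $L^2_\loc$ convergence. Away from the origin this is fine (you even have $C^1_\loc$ convergence from Proposition~\ref{prop:lim}\,\ref{it:approxProp}), but your uniform $L^2$ bound $\int_0^T\!\!\int_{B_R}|\nabla\mathcal{T}_k(f_\ve)|^2\le C$ does \emph{not} give the equi-integrability $\sup_\ve\int_0^T\!\!\int_{B_\rho}|\nabla\mathcal{T}_k(f_\ve)|^2\to0$ as $\rho\downarrow0$ that would be needed to control the contribution near $v=0$. The paper itself flags exactly this obstruction: strong $L^2_\loc$ convergence of the truncated gradients is not known, and the renormalised identity is therefore \emph{not} obtained by passing to the limit in the $\ve$-identity.

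The paper's route is different and makes essential use of two isotropic ingredients you did not invoke. Since $f$ is already a classical solution of~\eqref{eq:fpn} in $(0,\infty)\times(\mathbb{R}^d\setminus\{0\})$, one writes the renormalised identity directly for $f$, tested against $\xi'(f)\psi\,\vp_\rho(|v|)$ with a radial cutoff $\vp_\rho$ vanishing near the origin; the $\ve$-scheme is used only to obtain the $L^2_\loc$ bound on $\nabla\mathcal{T}_k(f)$ via the energy dissipation estimate. The issue then reduces to showing that the boundary-layer term
\[
\int_0^T\!\!\int_{\dom}(\nabla f+h(f)v)\cdot\tfrac{v}{|v|}\,\xi'(f)\,\psi\,\vp_\rho'(|v|)\,\dd v\dd t
\]
vanishes as $\rho\downarrow0$. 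For this the paper uses (i) the radial Lipschitz bound $|r^{d-1}\partial_r g+r^d h(g)|\le K_*$ inherited from~\eqref{eq:apriori}, which makes $|\xi'(g)r^{d-1}\partial_r g|$ uniformly bounded, and (ii) the full dichotomy of Theorem~\ref{thm:profile.r}: for each fixed $t$, either $g(t,\cdot)$ is smooth near $0$ (so $\partial_r g$ is bounded there) or $g(t,0^+)=+\infty$ (so $\xi'(g(t,r))=0$ for all small $r$, since $\xi'$ has compact support). In either case the integrand on the thin annulus $\{\rho\le|v|\le2\rho\}$ tends to zero, and dominated convergence in $t$ finishes the job. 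Your use of Theorem~\ref{thm:profile.r} only through the upper bound~\eqref{eq:thm.profile.r.upper.bd} misses precisely this dichotomy, which is what replaces the unavailable strong gradient convergence near the origin.
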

The proof of this theorem is given in Section~\ref{ssec:renorm} and 
makes use, among others, of a local and truncated version of the energy dissipation estimate.
The following energy dissipation identity is crucial for deducing the long-time asymptotic behaviour.

\begin{proposition}[Energy dissipation (in)equality]\label{prop:edb}
Assume~\ref{it:HPspc}, \ref{it:HP.init} and use the notations of Proposition~\ref{prop:lim}.
Then for all $t>0$
		\begin{align}\label{eq:ed}
		\Entr(f(t)) - \Entr(\fin) \le  -\int_0^t\!\int_\dom \frac{1}{h(f)}|\nabla f+h(f)v|^2\,\dd v\dd\tau.
	\end{align}
When supposing in addition the hypotheses of Theorem~\ref{thm:profile.r}, the stronger   balance law holds true:
for all $t\ge s\ge 0$
\begin{align}\label{eq:edb}
	\Entr(f(t)) - \Entr(f(s)) = -\int_s^t\!\int_\dom \frac{1}{h(f)}|\nabla f+h(f)v|^2\,\dd v\dd\tau.
\end{align}
\end{proposition}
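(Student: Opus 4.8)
The plan is to derive both the inequality~\eqref{eq:ed} and the balance law~\eqref{eq:edb} from the corresponding statements at the approximate level, passing to the limit $\ve\downarrow 0$ and exploiting the convergence properties collected in Proposition~\ref{prop:lim}. First I would record the energy dissipation identity for the regularised problem~\eqref{eq:fpnreg}: since $h_\ve$ is bounded below by the identity and $f_\ve$ is a smooth, rapidly decaying solution with propagated moments, one obtains for all $t\ge s\ge 0$
\begin{align*}
	\Entr_\ve(f_\ve(t)) - \Entr_\ve(f_\ve(s)) = -\int_s^t\!\int_\dom \frac{1}{h_\ve(f_\ve)}|\nabla f_\ve+h_\ve(f_\ve)v|^2\,\dd v\dd\tau,
\end{align*}
where $\Entr_\ve$ is the free energy built from the primitive $\Phi_\ve$ of $1/h_\ve$; this is the rigorous counterpart of the formal computation in the introduction and should already be available from Section~\ref{sec:renorm} (it is the natural \emph{equality} for the approximate flow). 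The monotonicity $h_\ve\le h$ and $\eta_\ve\uparrow$ as $\ve\downarrow 0$ gives $\Phi_\ve\ge\Phi$ pointwise, hence $\Entr_\ve(f_\ve)\ge\Entr(f_\ve)$, and $\Phi_\ve\downarrow\Phi$; this is the source of the inequality in~\eqref{eq:ed}.

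For~\eqref{eq:ed} the argument is then: fix $t>0$, write the approximate \emph{equality} from $s=0$ to $t$, and pass to the limit. On the left one uses $\Entr_\ve(f_\ve(0))=\Entr_\ve(\fin)\to\Entr(\fin)$ by monotone convergence $\Phi_\ve(\fin)\downarrow\Phi(\fin)$ (the quadratic moment term is unchanged and finite by~\ref{it:HP.init}), while $\Entr_\ve(f_\ve(t))\ge\Entr(f_\ve(t))$ and the convergence $f_\ve(t)\to f(t)$ in $C^{1,2}_\loc((0,\infty)\times U)$ together with suitable moment and entropy bounds, plus Fatou/lower semicontinuity of $\Entr$ with respect to this convergence, yields $\liminf_\ve\Entr_\ve(f_\ve(t))\ge\Entr(f(t))$. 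On the right, the dissipation integrand is lower semicontinuous, so by Fatou in the $(\tau,v)$ variables (using $f_\ve\to f$ and $h_\ve\to h$ locally uniformly away from the origin, and that the origin is a null set) one gets $\liminf_\ve\int_0^t\!\int\frac1{h_\ve(f_\ve)}|\nabla f_\ve+h_\ve(f_\ve)v|^2\ge\int_0^t\!\int\frac1{h(f)}|\nabla f+h(f)v|^2$. Combining these three one-sided bounds produces exactly~\eqref{eq:ed}.

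For the \emph{equality}~\eqref{eq:edb} under the hypotheses of Theorem~\ref{thm:profile.r} one needs the reverse inequality, and this is where the main obstacle lies: one must show that no free energy is lost in the limit, i.e.\ that $\Entr_\ve(f_\ve(t))\to\Entr(f(t))$ for a.e.\ (or every) $t$ and that the dissipation integrals converge rather than merely satisfy Fatou. The key point is that the sublinearity of $\Phi$ at infinity means Dirac masses carry zero free energy, so the only way mass concentrates is consistent with $\Entr(\mu_t)=\Entr(f(t))$; the universal profile~\eqref{eq:thm.profile.r.upper.bd}, giving the pointwise bound $g(\hat t,r)\le g_c(r)+A(\hat t,r)r^{2-d}$ near the origin with $g_c(r)\sim(\tfrac2\gamma)^{1/\gamma}r^{-2/\gamma}$ and $d>\tfrac2\gamma$, provides precisely the time-uniform, integrable domination of $f(t,\cdot)$ near $v=0$ (in $L^1_2$ and in the $\Phi$-sense) that upgrades the local convergence $f_\ve(t)\to f(t)$ to convergence of $\Entr_\ve(f_\ve(t))$. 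Concretely I would: (i) use the profile bound together with the known far-field moment bounds to get an $\ve$-uniform, $t$-locally-uniform bound on $\|f_\ve(t)\|_{L^1_2}$ and on $\int\Phi_\ve(f_\ve(t))$, and equi-integrability near the origin; (ii) deduce strong $L^1_\loc$ convergence and, via the equi-integrability, convergence of the full energy $\Entr_\ve(f_\ve(t))\to\Entr(f(t))$ along the sequence; (iii) pass to the limit in the approximate equality between \emph{arbitrary} times $s\le t$, obtaining $\Entr(f(t))-\Entr(f(s))=-\liminf$ (and $=-\limsup$) of the dissipation, forcing convergence of the dissipation integral and hence~\eqref{eq:edb}. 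One subtlety to handle carefully is the behaviour at $t$ where $\mu_t(\{0\})>0$: there~\eqref{eq:profile*} holds with the exact profile, and one must check that the truncated-gradient control $\mathcal{T}_k(f)\in L^2_\loc H^1_\loc$ from Theorem~\ref{thm:renorm} (or its proof) is compatible with passing to the limit in the dissipation term near the origin; a truncated/localised version of the dissipation identity, exactly as used in the proof of Theorem~\ref{thm:renorm}, should be invoked to make this rigorous, letting the truncation and localisation parameters tend to their limits after $\ve\downarrow 0$.
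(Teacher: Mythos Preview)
Your argument for the inequality~\eqref{eq:ed} matches the paper's: pass to the limit in the approximate identity (Lemma~\ref{l:edi.reg}) using $\Entr_\ve(\fin)\to\Entr(\fin)$, the lower-semicontinuity $\liminf_\ve\Entr_\ve(f_\ve(t))\ge\Entr(f(t))$ (Lemma~\ref{l:edinH}), and Fatou for the dissipation.

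For the equality~\eqref{eq:edb} your route diverges from the paper's and carries two genuine gaps. First, the upper profile bound~\eqref{eq:thm.profile.r.upper.bd} (and Proposition~\ref{prop:upperBd}) holds only for the \emph{limit} $g$, not uniformly in $\ve$ for $g_\ve$; the paper says so explicitly after Proposition~\ref{prop:upperBd}. The only $\ve$-uniform pointwise control near the origin is $g_\ve(t,r)\lesssim r^{-d}$ from Lemma~\ref{l:boundiso}, which is not integrable there, so your step~(i) does not deliver the equi-integrability you need for $\int\Phi_\ve(f_\ve(t))\to\int\Phi(f(t))$ (recall $\Phi_\ve$ grows like $c_\ve\,s\log s$ for large $s$, so concentration of $f_\ve$ near $v=0$ can contribute). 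Second, even granting energy convergence at every time, step~(iii) yields only one direction: from $\Entr(f(t))-\Entr(f(s))=-\lim_\ve\int_s^t\mathcal{D}_\ve$ and Fatou you get $\Entr(f(t))-\Entr(f(s))\le-\int_s^t\mathcal{D}$, but not the reverse. Ruling out loss of dissipation in the limit is precisely the hard point, and nothing in your sketch addresses it.

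The paper circumvents both issues by \emph{not} passing through the scheme for the equality. It works directly with the limit $f$, a classical solution of~\eqref{eq:fpn} on $(0,\infty)\times(\mathbb{R}^d\setminus\{0\})$, and computes $\tfrac{\dd}{\dd t}\Entr^{(\rho)}(f(t))$ with a radial cutoff $\vp_\rho(|v|)$ excising $B_\rho(0)$. This yields an exact identity with a boundary term
\[
B(\tau,\rho)=c_d\int_0^{2\rho}\big[\tfrac12 r^2+\Phi'(g)\big]\big(\partial_r g+h(g)r\big)\,\vp_\rho'(r)\,r^{d-1}\dd r,
\]
and one must show $\int_0^t B(\tau,\rho)\,\dd\tau\to0$ as $\rho\to0$. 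The isotropic flux bound $\big|(\partial_r g+h(g)r)r^{d-1}\big|\le K_*$, inherited from Proposition~\ref{prop:cpbound} in the limit, gives a $\rho$-uniform bound on $|B(\tau,\rho)|$, reducing matters to pointwise convergence in $\tau$. Theorem~\ref{thm:profile.r} then supplies a dichotomy: either $g(\tau,0{+})=+\infty$, whence $\Phi'(g(\tau,r))\to0$ and $B\to0$; or $g(\tau,\cdot)\in L^\infty$ and is smooth up to the origin, and $B\to0$ trivially. The key mechanism you are missing is that the radial Lipschitz-in-time mass bound controls the boundary flux at the origin \emph{for the limiting density itself}, so no $\ve$-uniform profile control is needed.
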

See Section~\ref{ssec:edi} for the proof of Proposition~\ref{prop:edb}.

The long-time behaviour and further transient dynamical properties can be seen as corollaries of the above results (cf.\ Section~\ref{sec:longtime} for details). 
Let us here only highlight the long-time asymptotics. 
\begin{theorem}[Convergence to minimiser]\label{thm:long-time}
	Assume the hypotheses of  Theorem~\ref{thm:profile.r} and denote by $m=\int\fin>0$ the total mass of the initial data. Further let $\mu_{\min}:=\mu^{(m)}_{\min}$ denote the unique minimising measure of $\mathcal{H}$ for the given mass $m$ (cf.\ eq.~\eqref{eq:min}).
	Then,  as $t\to\infty$, $\mathcal{H}(\mu_t)\to\mathcal{H}(\mu_{\min})$, and moreover
		\begin{align}
		&\mu_t\weakstar\mu_{\min}\quad\text{ in }\mathcal{M}_+(\mathbb{R}^d)\qquad \text{and}
		\qquad \mu_t(\{0\})\to\mu_{\min}(\{0\}),
		\\&f(t)\to f_{\min}\quad\text{ in }C^2_\loc(\mathbb{R}^d\setminus\{0\}),
		\\&f(t)\to f_{\min}\quad\text{ in }L^p(\mathbb{R}^d)\quad\text{ for any }p\in[1,\tfrac{\gamma d}{2}),
	\end{align}
where $f_{\min}$ denotes the density of the regular part of $\mu_{\min}$ with respect to the Lebesgue measure.
\end{theorem}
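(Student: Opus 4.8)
The plan is to deduce Theorem~\ref{thm:long-time} from the energy dissipation balance~\eqref{eq:edb}, the universal profile bound of Theorem~\ref{thm:profile.r}, and compactness of the mass-conserving curve $t\mapsto\mu_t$. First I would combine~\eqref{eq:edb} with the fact that $\mathcal{H}(f(t))=\mathcal{H}(\mu_t)$ is bounded below (the confinement term is $\geq 0$ and $\Phi$ is bounded below) to conclude that the dissipation $\int_0^\infty\!\int_\dom \frac{1}{h(f)}|\nabla f+h(f)v|^2\,\dd v\dd\tau<\infty$ is finite and that $\mathcal{H}(\mu_t)$ is non-increasing, hence converges to some limit $\mathcal{H}_\infty\geq\mathcal{H}(\mu_{\min})$ as $t\to\infty$. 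The finiteness of the dissipation gives a sequence $t_n\to\infty$ along which $\int_\dom \frac{1}{h(f(t_n))}|\nabla f(t_n)+h(f(t_n))v|^2\,\dd v\to 0$.

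Next I would extract a weak-$\ast$ limit: by the uniform mass bound $\mu_{t_n}(\mathbb{R}^d)=m$ and propagation of a moment of order $>2$ (so that $\{\mu_{t_n}\}$ is tight, with uniformly bounded second moments), a subsequence satisfies $\mu_{t_n}\weakstar\nu$ in $\mathcal{M}_+(\mathbb{R}^d)$ for some $\nu$ with $\nu(\mathbb{R}^d)=m$ and finite second moment; by lower semicontinuity $\mathcal{H}(\nu)\leq\liminf\mathcal{H}(\mu_{t_n})=\mathcal{H}_\infty$. The decomposition $\mu_{t_n}=\ptm(t_n)\delta_0+f(t_n)\mathcal{L}^d$ together with the interior parabolic estimates underlying Proposition~\ref{prop:lim}\ref{it:dec} (elliptic/parabolic regularity away from the origin, applied to the time-shifted solutions) gives, along a further subsequence, $f(t_n)\to f_\infty$ in $C^2_\loc(\mathbb{R}^d\setminus\{0\})$; passing to the limit in the vanishing-dissipation bound and in the stationary form of~\eqref{eq:fpn} shows $\nabla f_\infty+h(f_\infty)v=0$ a.e., i.e.\ $f_\infty=f_{\infty,\theta}$ for some $\theta\geq 0$. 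The crucial point is then to rule out spurious limits: if $\theta>0$ and simultaneously $\nu(\{0\})>0$, this would be a non-minimising stationary measure; here Theorem~\ref{thm:profile.r}'s upper bound~\eqref{eq:thm.profile.r.upper.bd}, $g(t_n,r)\leq g_c(r)+A(t_n,r)r^{2-d}$ with $A$ bounded, forces the limiting radial profile to satisfy $g_\infty(r)\leq g_c(r)$ near $0$, which is incompatible with $f_\infty=f_{\infty,\theta}$, $\theta>0$, having no mass at the origin only if $\nu(\{0\})=0$ — so whenever $\nu(\{0\})>0$ we must have $\theta=0$, i.e.\ $f_\infty=f_c$ and $m>m_c$; and when $\nu(\{0\})=0$ the constraint $\int f_{\infty,\theta}=m$ forces $m\leq m_c$ and fixes $\theta$. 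Either way $\nu=\mu_{\min}$ by the uniqueness in Theorem~\cite{BGT_2011}, and in particular $\mathcal{H}_\infty=\mathcal{H}(\nu)\geq\mathcal{H}(\mu_{\min})=\mathcal{H}(\nu)$, giving $\mathcal{H}_\infty=\mathcal{H}(\mu_{\min})$.

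Having identified the limit along one sequence, I would upgrade to convergence of the full family: since $\mathcal{H}(\mu_t)\downarrow\mathcal{H}(\mu_{\min})$ and \emph{every} weak-$\ast$ subsequential limit $\nu$ of $\{\mu_t\}_{t\to\infty}$ satisfies $\mathcal{H}(\nu)\leq\mathcal{H}(\mu_{\min})$ by lower semicontinuity and hence (being a probability-type measure of mass $m$) equals $\mu_{\min}$ by the above rigidity argument, the whole curve converges: $\mu_t\weakstar\mu_{\min}$ as $t\to\infty$. For the convergence $\mu_t(\{0\})\to\mu_{\min}(\{0\})$ I would use that $\mu_t(\{0\})=m-\int f(t)\,\dd v$ and that, by the profile bound~\eqref{eq:thm.profile.r.upper.bd} providing a time-uniform dominating function near the origin (integrable since $2-d>-d$, i.e.\ $r^{2-d}$ is $L^1$ near $0$ in $\mathbb{R}^d$) together with tightness at infinity from the uniform moment bound, the family $\{f(t)\}$ is uniformly integrable; combined with $f(t)\to f_{\min}$ pointwise on $\mathbb{R}^d\setminus\{0\}$ (from $C^2_\loc$ convergence along every sequence, which now holds for the full limit by the same rigidity) this gives $f(t)\to f_{\min}$ in $L^1(\mathbb{R}^d)$, hence $\int f(t)\to\int f_{\min}$ and therefore $\mu_t(\{0\})\to\mu_{\min}(\{0\})$. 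The $C^2_\loc(\mathbb{R}^d\setminus\{0\})$ convergence of $f(t)$ follows from the interior estimates plus uniqueness of the limit, and $L^p$ convergence for $p\in[1,\tfrac{\gamma d}{2})$ follows by interpolating the $L^1$ convergence against the uniform $L^\infty_\loc$-away-from-zero bound and the time-uniform near-origin control $f(t,v)\lesssim|v|^{-2/\gamma}$ (from~\eqref{eq:thm.profile.r.upper.bd} and $g_c(r)\sim(2/\gamma)^{1/\gamma}r^{-2/\gamma}$), which lies in $L^p_\loc$ precisely when $p<\gamma d/2$.

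The main obstacle I anticipate is the rigidity step: ensuring that no mass is ``lost to'' a non-minimising stationary state of the form $f_{\infty,\theta}+c\,\delta_0$ with $\theta>0$, $c>0$. This is exactly where the upper profile bound of Theorem~\ref{thm:profile.r} is indispensable — it is a \emph{time-uniform} continuity-at-the-origin property that passes to the limit $t\to\infty$ and excludes a positive point mass coexisting with a strictly subcritical regular part; without it the energy argument alone would not distinguish $\mu_{\min}$ from these spurious PDE steady states (as flagged in the introduction). A secondary technical point is justifying that the vanishing of the dissipation integral along $t_n$, an integral of $f(t_n)$ over all of $\mathbb{R}^d$, genuinely forces the limiting \emph{regular} part to be stationary; here one must be careful that the dissipation controls $\nabla f+h(f)v$ in a weighted $L^2$ sense and combine this with the lower bound $f_{|K}\geq c(K)>0$ on compacts away from the origin (Proposition~\ref{prop:lim}\ref{it:dec}) to get locally uniform control, then pass to the limit on each compact subset of $\mathbb{R}^d\setminus\{0\}$ and finally near the origin via the profile bound.
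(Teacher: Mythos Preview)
Your overall architecture matches the paper's: vanishing dissipation along a sequence $t_k\to\infty$, compactness, identification of the limit with $\mu_{\min}$, then upgrading to full convergence via monotonicity of $\mathcal{H}$ and uniqueness of the minimiser. The passage from vanishing dissipation to $f_\infty=f_{\infty,\theta}$ is handled in the paper via the substitution $\lambda(s)=\exp(\Phi'(s))$, which satisfies $h/\lambda^2\ge1$ and gives the clean \emph{unweighted} bound $\mathcal{D}(f)\ge\int|\nabla\lambda(f)+v\lambda(f)|^2\,\dd v$; your alternative route through local positivity on compacta away from the origin would also work and you correctly flag it.

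However, your rigidity argument has a genuine gap. You invoke only the \emph{upper} bound~\eqref{eq:thm.profile.r.upper.bd} to exclude the spurious limit $f_{\infty,\theta}\mathcal{L}^d+c\,\delta_0$ with $\theta>0$, $c>0$. But the upper bound $g_\infty(r)\le g_c(r)$ is \emph{compatible} with every $f_{\infty,\theta}$, since $f_{\infty,\theta}\le f_c$ for all $\theta\ge0$; it places no constraint on $\theta$. What actually forces $\theta=0$ is the \emph{lower} bound encoded in the profile identity~\eqref{eq:profile*}. The paper's argument proceeds in two steps: first, the upper bound (giving a time-uniform $L^1$-dominating function near the origin) ensures that if $\mu_\infty(\{0\})>0$ then $\mu_{t_k}(\{0\})>0$ for all large $k$; second, at each such $t_k$ the last assertion of Theorem~\ref{thm:profile.r} says the full profile \emph{equality} holds, i.e.\ $g(t_k,r)\ge g_c(r)-Cr^{2-d}$. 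Passing to the limit in this lower bound via the $C^2_\loc$ convergence yields $g_\infty(r)\ge g_c(r)-Cr^{2-d}\to\infty$ as $r\downarrow0$, which is incompatible with the boundedness of $f_{\infty,\theta}$ for $\theta>0$. So both directions of Theorem~\ref{thm:profile.r} are needed, and in the order just described; your proposal inverts their roles and thereby loses the mechanism that rules out the non-minimising stationary measures.
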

The proof of this result will be completed in Section~\ref{ssec:relax}.

\begin{Remark}[Anisotropic case]\label{rem:ani}
	While the main conclusions in Theorems~\ref{thm:profile.r},~\ref{thm:renorm} and~\ref{thm:long-time} are restricted to the isotropic setting, several of the 	
	intermediate results derived in this paper are proved for anisotropic data. 
	Below, we summarise the most relevant results obtained for anisotropic data satisfying~\ref{it:HPspc},~\ref{it:HP.init}, where for simplicity we restrict to regime~\eqref{eq:regime1}.
	\begin{enumerate}[label=(\alph*),leftmargin=1.75\parindent]
		\item\label{it:ani.mc-curve} Limiting measure \& upper bound for density: Convergence of a sequence of approximate solutions to a mass-conserving curve $t\mapsto \mu_t= \ptm(t)\delta_0+f(t,\cdot)\mathcal{L}^d$ as detailed in Proposition~\ref{prop:lim}~\ref{it:curve}--\ref{it:approxProp}. The density $f(t,\cdot)$ satisfies the pointwise bound 
		\[\;\;\qquad0\le f(t,v) \le 
		\big(\tfrac{2}{\gamma}\Big)^\frac{1}{\gamma}|v|^{-\frac{2}{\gamma}}
		\min\{\, 1\,{+}\,C_1|v|^{\frac{2}{\gamma}+2-d},C_2|v|^{-(d-\frac{2}{\gamma})}\,\}\;\;\text{for all $t>0$, $v\in\mathbb{R}^d$},\] and the point mass at the origin 
		$t\mapsto\mu_t(\{0\})$ is continuous, see Corollaries~\ref{cor:upper.ani}~and~\ref{cor:boundaniso}.
		\item\label{it:ani.edi} Energy dissipation inequality: The density $f$ obeys inequality~\eqref{eq:ed} for all $t>0$.
		\item\label{it:ani.asympt} Finite-time condensation and relaxation to free energy minimiser for certain data:
		If $\fin$ is bounded below by an admissible isotropic density of supercritical mass, the limiting measure obtained in~\ref{it:ani.mc-curve} exhibits a Dirac mass  at the origin after a finite time and converges, in the long-time limit, to the (singular) minimiser of the free energy with the same mass.
	\end{enumerate}
The crucial compactness property leading to the convergence result in~\ref{it:ani.mc-curve} is obtained by pointwise comparison at the level of the approximate densities with an isotropic envelope. It is at this point, where the stronger decay hypothesis on the data imposed in the anisotropic case (cf.~\ref{it:HP.init}) enters, since for this argument the initial data $\fin$ need to lie below an admissible isotropic envelope $\hat f_{\rm in}$, that is $\fin\le\hat f_{\rm in}$. The assertion in~\ref{it:ani.asympt} follows from similar comparison arguments combined with the time-asymptotic results obtained in the isotropic case.
Finally, key to the energy dissipation inequality~\ref{it:ani.edi} are the Fokker--Planck type variational structure of the regularised problem, lower semi-continuity properties of the convex free energy and 
the strong convergence properties away from the origin in Proposition~\ref{prop:lim}~\ref{it:approxProp}.
This paper leaves open the question of whether the limiting measure obtained in~\ref{it:ani.mc-curve} relaxes to the minimiser of the free energy with the same mass for all admissible anisotropic data. While the energy dissipation inequality allows to conclude convergence of the density $f$ along a sequence of times $t_k\to\infty$ to $f_{\infty,\theta}$ for some $\theta\ge0$, our estimates do not allow to rule out the case of the parameter $\theta$ being larger than that of $\mu_{\min}$ for a given mass, i.e.\ the case of a simultaneous presence of a smooth density and a Dirac mass at zero. 
The main problem is the lack of sufficiently strong lower bounds for anisotropic densities in the presence of singularities and concentrations. Such bounds might possibly be obtained by suitably controlling the change of mass in small neighbourhoods of the origin. In the isotropic case, the crucial estimate is~\eqref{eq:apriori}. It follows from the comparison principle structure of the equation for the partial mass function~\eqref{eq:208}.
	\end{Remark}
	\subsection{Outline}
	The remaining part of this paper is structured as follows. In Section~\ref{sec:ex} we establish global existence  for the approximate problem~\eqref{eq:fpnreg} as well as uniform estimates, which 
	allow us to pass to the limit $\ve\to0$ in Section~\ref{ssec:passlim}. An
	 important ingredient is the uniform bound in Proposition~\ref{prop:cpbound}, which is obtained using a comparison technique. 
	Section~\ref{sec:profile} lies at the heart of our analysis. Its main purpose is to establish the universal profile asserted in Theorem~\ref{thm:profile.r} (see Sec.~\ref{ssec:profile.proof}).
In Section~\ref{sec:renorm} we introduce entropy tools and use the results from Section~\ref{sec:profile} to show, for the isotropic case, the renormalised solution property of~\eqref{eq:fpn} as well as the energy dissipation identity.
Section~\ref{sec:longtime} concludes with a characterisation of the long-time asymptotics and some additional remarks.
	
\subsection{Notations}\label{ssec:notations}	
Unless specified otherwise, we adopt the following notations:
	\begin{itemize}[label=\raisebox{0.25ex}{\tiny$\bullet$},leftmargin=1.1\parindent]\itemsep.1em
		\item 
		$L^p_\ell(\mathbb{R}^d)$ for $p\in[1,\infty],\ell\ge0:$ weighted $L^p$-space with norm $\|f\|_{L^p_\ell}:=\|(1+|\cdot|^\ell)f\|_{L^p(\mathbb{R}^d)}$,  
		where $|\cdot|$ denotes the function $v\mapsto|v|$. The 
		spaces $L^p_\ell(\mathbb{R}^d)$ are Banach spaces.
		\item $C^{1,2}((0,\infty)\times\mathbb{R}^d):$ space of continuously differentiable  functions $f=f(t,v)$ that are twice continuously differentiable with respect to $v\in \mathbb{R}^d$.
		\item $\mathcal{M}_+(G):$ set of non-negative finite (Radon) measures on a given Borel set $G\subset\mathbb{R}^N$, $N\in \mathbb{N}$. Usually, $G=\mathbb{R}^d$ or $G=I\times\mathbb{R}^d$ for an interval $I\subset[0,\infty)$.
		\item $\mu_n\weakstar\mu$ in $\mathcal{M}_+(G)$ for $\mu_n,\mu\in \mathcal{M}_+(G)$
		 stands for the convergence $\int_G \vp\,\dd\mu_n\to\int_G \vp\,\dd\mu$ for all $\vp\in C_b(G)$. 
		 This mode of convergence will be referred to as weak-$\ast$ convergence of measures.
		 It is induced by a distance on $\mathcal{M}_+(G)$~\cite[Sec.~5.1]{AGS_2008},~\cite{Klenke_2014}.
		\item We write $\dd\mu=\dd\mu_t\dd t$ for non-negative Radon measures $\mu$ on $[0,\infty)\times\mathbb{R}^d$ and $\mu_t$ on $\mathbb{R}^d$, $t\ge0$, 
		with $\mu_t(\mathbb{R}^d)\equiv \mathrm{const.}\in\mathbb{R}_+$  	
		 if for every $\vp\in C_c([0,\infty)\times\mathbb{R}^d)$ the function $t\mapsto\int_{\mathbb{R}^d}\vp(t,v)\,\dd\mu_t(v)$ is 
		Lebesgue measurable and 
		$\int_{[0,\infty)\times\mathbb{R}^d}\vp\,\dd\mu=\int_{[0,\infty)}\int_{\mathbb{R}^d}\vp(t,v)\,\dd\mu_t(v)\dd t$.
		 \item $\mu^\mathrm{reg}$, $\mu^\mathrm{sing}:$ regular and singular part of $\mu\in \mathcal{M}_+(G)$ with respect to the Lebesgue measure on $G\subset\mathbb{R}^N$.
		 \item $\mathcal{L}^d:$  $d$-dimensional Lebesgue measure.
		 \item $\tvmeas_+:$  
		 $(1{+}d)$-dimensional Lebesgue measure on $[0,\infty)\times\mathbb{R}^d$.
		\item $A\lesssim B$ for non-negative quantities $A,B$ stands for $A\le CB$ for a fixed constant $C\in(0,\infty)$. The relation $A\gtrsim B$ is defined as $B\lesssim A$.
		\item $s_+:=\max\{s,0\}$ for $s\in\mathbb{R}$.
		\item $B_r:=B_r(0):=\{v\in \mathbb{R}^d:|v|<r\}$.
		\item $g_c(r)=f_c(v)$ for $r=|v|$, where $f_c=f_{\infty,0}$ as defined in~\eqref{eq:den}.
		Equivalently, $g_c(r)=(\Phi')^{-1}(-\frac{1}{2}r^2)$.
	\end{itemize}
	
	\section{Approximation scheme}\label{sec:ex}
	
As pointed out in the introduction,  local-in-time classical solutions of~\eqref{eq:fpn} emanating from initial data that are large in a suitable sense may cease to exist in $L^\infty(\mathbb{R}^d)$ after a finite time. 
The main purpose of this section is to establish global existence for the approximation scheme~\eqref{eq:fpnreg} in spaces of suitable regularity as well as certain compactness and convergence properties for the corresponding approximate solutions.
In the isotropic case, our scheme obeys a monotonicity property and, as a consequence, gives rise to a unique limiting measure. Note that this feature may also be of interest from a numerics point of view. A key ingredient in the analysis is a uniform temporal Lipschitz bound for the partial mass function of isotropic solutions (see Proposition~\ref{prop:cpbound}).

	\subsection{Mild solutions}\label{ssec:mild}

The local-in-time wellposedness of equations~\eqref{eq:fpn} and~\eqref{eq:fpnreg} in suitably weighted spaces can conveniently be obtained in the framework of mild solutions
using the Duhamel integral formulation  of~\eqref{eq:fpn} resp.\ of~\eqref{eq:fpnreg} given by
{\small
\begin{align}\label{eq:115ori}
		f(t,v) &=\int_{\mathbb{R}^d}\mathcal{F}(t,v,w)\fin(w)\,\dd w
		+\int_0^t\!\int_{\mathbb{R}^d}\mathcal{F}(t{-}s,v,w)\big(\divv_w(w\,|f|^{\gamma}f)\big)|_{(s,w)}\,\dd w\dd s,\qquad
\\\label{eq:115}
	f_\ve(t,v) &=\int_{\mathbb{R}^d}\mathcal{F}(t,v,w)\fin(w)\,\dd w
	+\int_0^t\!\int_{\mathbb{R}^d}\mathcal{F}(t{-}s,v,w)\big(\divv_w(w\,\vartheta_\ve(f_\ve))\big)|_{(s,w)}\,\dd w\dd s,\quad
\end{align}}where $\mathcal{F}=\mathcal{F}(t,v,w)$ denotes the fundamental solution of the linear Fokker--Planck equation
$\partial_t\mathsf{f}=\nabla\cdot(\nabla \mathsf{f}+v\mathsf{f})$, that is~(cf.~\cite{CT_1998})
\begin{align}
	\mathcal{F}(t,v,w) = \ee^{dt}G_{\nu(t)}(\ee^tv-w)
\end{align}
with
\begin{align}
	\nu(t)=\mathrm{e}^{2t}-1,\qquad 
	G_\lambda(\xi)=(2\pi\lambda)^{-\frac{d}{2}}\mathrm{e}^{-\frac{|\xi|^2}{2\lambda}}.
\end{align}
 In this subsection, we collect several auxiliary results for mild solutions, many of which can be obtained as in~\cite{CLR_2009}. 
The reasoning is therefore kept brief.

Using integration by parts, equation~\eqref{eq:115ori} can formally be rewritten as
\begin{align}\label{eq:116}
	\begin{aligned}
f(t,v) &=\int_{\mathbb{R}^d}\mathcal{F}(t,v,w)\fin(w)\,\dd w\\&\qquad +\int_0^t\ee^{-(t-s)}\int_{\mathbb{R}^d}
	\nabla_v\mathcal{F}(t{-}s,v,w)\cdot w\,|f|^{\gamma}f|_{(s,w)}\,\dd w\,\dd s.
\end{aligned}
\end{align}
Analogously, we may rewrite equation~\eqref{eq:115}. For estimating the integrals appearing on the right-hand side of~\eqref{eq:116} we use the semi-group estimates in~\cite[Appendix~A]{CLR_2009}.
By~\cite[Proposition~A.1]{CLR_2009} the linear operator
\begin{align}
	\mathscr{F}[f](t,v):=\int_{\mathbb{R}^d}\mathcal{F}(t,v,w)f(w)\,\dd w
\end{align}
enjoys the following smoothing estimates for all $t\in(0,T]$ and $T<\infty$
\begin{align}\label{eq:FPsemigr}
	\|\nabla^k_v\mathscr{F}[f](t)\|_{L_\ell^q}\le C_T\nu(t)^{-\frac{d}{2}(\frac{1}{p}-\frac{1}{q})-\frac{k}{2}}\|f\|_{L^p_\ell}
\end{align}
for any $1\le p\le q\le \infty$, $\ell\ge0$ and $k\in \mathbb{N}_0$, where
the constant $C_T=C_T(d,q,k)$ is given by $C_T=C\exp((\tfrac{d}{q'}+k)T)$
 with $\frac{1}{q'}+\frac{1}{q}=1$ and $C<\infty$ a universal constant.
 For the definition of $\|\cdot\|_{L^p_\ell}$ we refer to Section~\ref{ssec:notations}.
 
In the rest of this section, $C$ denotes a constant that may depend on fixed parameters, but not on time. Constants that additionally depend on the (final) time $T<\infty$ are denoted by $C_T$.
 Any such constants may change from line to line.

We begin with a uniqueness result.
\begin{lemma}[Uniqueness of mild solutions for~\eqref{eq:fpn} and~\eqref{eq:fpnreg}]\label{l:uniq.mild}
	Let $p>d$.  
	There exists at most one mild solution $f\in C([0,T];(L^\infty\cap L^p_1)(\mathbb{R}^d))$
	of equation~\eqref{eq:fpn}. 
	 An analogous result holds for equation~\eqref{eq:fpnreg}.
\end{lemma}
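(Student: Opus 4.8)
The statement is a uniqueness result for mild solutions in the class $C([0,T];(L^\infty\cap L^p_1)(\mathbb{R}^d))$ with $p>d$; the standard route is a Gronwall argument applied to the difference of two solutions in a suitable norm. First I would suppose that $f_1,f_2$ are two mild solutions with the same initial data $\fin$, set $w:=f_1-f_2$, and subtract the Duhamel formulas~\eqref{eq:116}, so that the linear (heat-semigroup) part cancels and one is left with
\begin{align*}
	w(t,v)=\int_0^t \ee^{-(t-s)}\int_{\mathbb{R}^d}\nabla_v\mathcal{F}(t{-}s,v,\cdot)\cdot\big(\,\cdot\,\big)\big(|f_1|^\gamma f_1-|f_2|^\gamma f_2\big)\,\dd w\,\dd s.
\end{align*}
The elementary inequality $\big||a|^\gamma a-|b|^\gamma b\big|\le C(|a|^\gamma+|b|^\gamma)|a-b|$ turns the nonlinearity into something controlled by $(\|f_1\|_{L^\infty}^\gamma+\|f_2\|_{L^\infty}^\gamma)|w|$, with an extra weight $|w|$ coming from the factor $w$ inside the divergence; the latter is absorbed by working in the weighted space $L^p_1$.

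Next I would estimate $\|w(t)\|_{L^\infty\cap L^p_1}$ using the smoothing bound~\eqref{eq:FPsemigr} with $k=1$. The key point is the choice $p>d$: taking $q=\infty$, $p$ in~\eqref{eq:FPsemigr} gives the factor $\nu(t-s)^{-\frac{d}{2p}-\frac12}$, and since $\frac{d}{2p}+\frac12<1$ exactly when $p>d$, the singularity at $s=t$ is integrable on $[0,t]$ (note $\nu(\tau)\sim 2\tau$ as $\tau\downarrow0$). This yields, for $t\in[0,T]$,
\begin{align*}
	\|w(t)\|_{L^\infty\cap L^p_1}\le C_T\big(\|f_1\|_{L^\infty_T}^\gamma+\|f_2\|_{L^\infty_T}^\gamma\big)\int_0^t (t-s)^{-\frac{d}{2p}-\frac12}\,\|w(s)\|_{L^\infty\cap L^p_1}\,\dd s,
\end{align*}
where $\|\cdot\|_{L^\infty_T}$ denotes the sup over $[0,T]$. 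A singular (fractional) Gronwall inequality then forces $w\equiv0$ on $[0,T]$. For~\eqref{eq:fpnreg} the argument is identical once one replaces $|f|^\gamma f$ by $\vartheta_\ve(f)=f\eta_\ve(f)$ and uses that $\vartheta_\ve$ is locally Lipschitz with $|\vartheta_\ve(a)-\vartheta_\ve(b)|\le C(\ve)(1+|a|^\gamma+|b|^\gamma)|a-b|$ (in fact $\vartheta_\ve$ is even globally Lipschitz since $\eta'$ vanishes for large argument), so the nonlinear term obeys the same type of bound and the same Gronwall step applies.

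The only genuine subtlety — the part I would be most careful about — is bookkeeping the weight: the factor $w$ (really the velocity variable $w$) multiplying the nonlinearity inside the divergence must be matched against the weight $1+|v|$ built into $L^p_1$, using that $\mathcal{F}(t-s,v,w)$ is concentrated near $w=\ee^{-(t-s)}v$ with Gaussian tails, so $|w|\,\mathcal{F}\lesssim (1+|v|)\,\widetilde{\mathcal{F}}$ for a comparable kernel (this is precisely the kind of estimate packaged in~\cite[Appendix~A]{CLR_2009}, and the exponential prefactors in $C_T$ absorb the loss). Apart from that, everything is routine; since the paper explicitly says "The reasoning is therefore kept brief," I would present only the difference identity, the nonlinearity inequality, the role of $p>d$ in making the time-integral converge, and the invocation of the singular Gronwall lemma, remarking that the case of~\eqref{eq:fpnreg} is entirely analogous (indeed easier, since $\vartheta_\ve$ has linear growth).
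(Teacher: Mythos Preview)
Your proposal is correct and follows essentially the same approach as the paper: subtract the Duhamel formulas, use $\big||a|^\gamma a-|b|^\gamma b\big|\lesssim(|a|^\gamma+|b|^\gamma)|a-b|$, apply the smoothing estimate~\eqref{eq:FPsemigr} with $k=1$ so that $p>d$ makes the exponent $\tfrac12+\tfrac{d}{2p}<1$, and conclude via the singular Gronwall inequality. The only (cosmetic) difference is that the paper closes the Gronwall loop in the $L^\infty$-norm alone, using the fixed $L^p_1\cap L^\infty$ bound on $f,\tilde f$ to absorb the velocity weight in $\|w\,(|f|^\gamma f-|\tilde f|^\gamma\tilde f)\|_{L^p}\le\||f|+|\tilde f|\|_{C_TL^p_1\cap L^\infty}^\gamma\|f-\tilde f\|_{L^\infty}$, whereas you track the full $L^\infty\cap L^p_1$ norm of the difference; this slightly simplifies the weight bookkeeping you flagged as the main subtlety.
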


\begin{proof}
	Let $f,\tilde f\in C([0,T];(L^\infty\cap L^p_1)(\mathbb{R}^d))$ both 
	satisfy equation~\eqref{eq:116} for $t\in[0,T]$. 
	Note that since $p>d$, we have $\alpha:=\frac{1}{2}+\frac{d}{2}\frac{1}{p}\in[0,1)$.
	We may thus estimate for $t\in[0,T]$, using the bound~\eqref{eq:FPsemigr} and recalling that $\nu(t)=\ee^{2t}-1$,
	\begin{align}
		\|f(t)-\tilde f(t)\|_{L^\infty}&\le C_T\int_0^t\nu(t{-}s)^{-\alpha}
		\|w(|f|^{\gamma}f(s,w)-|\tilde f|^{\gamma}\tilde f(s,w))\|_{L^p}\,\dd s
	\\&\le  C_T\||f|+|\tilde f|\|_{C([0,T];L^p_1\cap L^\infty)}^\gamma
	\int_0^t(t{-}s)^{-\alpha}
	\|f(s)-\tilde f(s)\|_{L^\infty}\,\dd s,
	\end{align}
where we used the fact that $\nu(t)\ge 2t$.
	Invoking the singular Gronwall inequality (see e.g.~\cite[Theorem~3.3.1]{Amann_1995}),
	we infer that $f(t)= \tilde f(t)$ for all $t\in(0,T]$, which shows the asserted uniqueness.
\end{proof}

We now seek to construct solutions taking values in the Banach space
\begin{align}\label{eq:defX.r}
	\XX =(L^\infty_\ell\cap L^1_n)(\mathbb{R}^d)
\end{align}
for  sufficiently large $\ell,n\in[1,\infty)$. 
Global-in-time existence of non-negative mild solutions to~\eqref{eq:fpnreg} will be obtained 
under the additional hypothesis that $n=n(\ell,d)$ be sufficiently large, cf.~\eqref{eq:paramX.r}. 
The reason for this condition is that
 our approach to control the $L^\infty_\ell$-norm of a local solution relies on an a priori control in $L^1_n$ for  $n=n(\ell,d)$ large enough, see the proof of Proposition~\ref{prop:ex.reg}. The uniform $L^1_n$-control of non-negative solutions, see Lemma~\ref{l:moments}, is a consequence of the Fokker--Planck structure, which naturally ensures the propagation of higher moment bounds.

The canonical norm on $X:=\XX$ will often be abbreviated by $\|\cdot\|_X$, i.e.\ we let
$\|f\|_X:=\|f\|_\XX:=\max\{\|f\|_{L^\infty_\ell},\|f\|_{L^1_n}\}$.

\begin{lemma}[Local existence for~\eqref{eq:fpn},~\eqref{eq:fpnreg} in $X$ 
	and basic properties]\label{l:locex.mild}
	Let $\ell,n\ge1$. 
	For any $L\in(0,\infty)$ there exists $T=T(L)>0$ such that 
	for every $\fin \in \XX$ with $\|\fin\|_\XX\le L$
	there exists a unique mild solution 
	$f\in C([0,T];\XX)$ of equation~\eqref{eq:fpn}.
	
	On any time interval $[0,T^*)$, where the local-in-time mild solution exists, one has the extra regularity 
	$t\mapsto \nu(t)^\frac{1}{2}|\nabla f(t)|\in C_b((0,T); X_{\ell,1})$ for every $T\in(0,T^*)$.
	
	Furthermore, if $\fin\ge0$, the following additional properties hold true:	
	\smallskip
	
	\begin{enumerate}[label=\normalfont{(\roman*)}]
		\item\label{it:pos} Positivity: $f\ge0$ in $(0,T^*)\times\mathbb{R}^d$.
		\item\label{it:Cinfty} Smoothness: $f\in C^{1,2}((0,T^*)\times\mathbb{R}^d)$ and~\eqref{eq:fpn} holds in the classical sense.
		\item\label{it:mass} Mass conservation: $\|f(t)\|_{L^1}=\|\fin\|_{L^1}$ for all $t\in(0,T^*)$.
		\item\label{it:isotropy} Preservation of radial symmetry: if $\fin$ is isotropic, so is $f(t)$ for all $t\in(0,T^*)$.
	\end{enumerate}
	Given non-negative initial data $\fin^{(i)}\in \XX,$ $ i=1,2$, denote by $f^{(i)}, i=1,2$, the mild solution emanating from $\fin^{(i)}$ and let  $[0,T^*)$ be a common time interval of existence. Then
	\begin{enumerate}[label=\normalfont{(\roman*)},resume]
		\item\label{it:contr} $L^1$-Contractivity: $\|f^{(1)}(t)-f^{(2)}(t)\|_{L^1}\le \|\fin^{(1)}-\fin^{(2)}\|_{L^1}$ for all $t\in(0,T^*)$.
		\item\label{it:cp} Comparison: if $\fin^{(1)}\le \fin^{(2)}$, then
		$f^{(1)}\le f^{(2)}$ in $(0,T^*)\times\mathbb{R}^d$.
	\end{enumerate}
\medskip

\noindent Completely analogous statements hold for the regularised problem~\eqref{eq:fpnreg}.	
\end{lemma}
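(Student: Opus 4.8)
The plan is to set up the standard Banach fixed-point argument for the Duhamel formulation~\eqref{eq:116}, then derive the listed qualitative properties one at a time from the mild-solution representation, its smoothing estimates, and elementary comparison/positivity arguments. First I would fix $\ell,n\ge 1$ and work in the complete metric space $\mathcal{B}_{T,R}:=\{f\in C([0,T];X):\|f\|_{C([0,T];X)}\le R,\ f(0)=\mathscr{F}[\fin]\}$ for $R\sim 2L$, and show that the map
\[
\Psi[f](t):=\mathscr{F}[\fin](t)+\int_0^t \ee^{-(t-s)}\int_{\mathbb{R}^d}\nabla_v\mathcal{F}(t{-}s,v,w)\cdot w\,|f|^\gamma f|_{(s,w)}\,\dd w\,\dd s
\]
is a contraction on $\mathcal{B}_{T,R}$ for $T=T(L)$ small. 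The key tools are: the smoothing bound~\eqref{eq:FPsemigr} with $k=1$, which produces a factor $\nu(t{-}s)^{-1/2}$ that is integrable near $s=t$ (using $\nu(t)\ge 2t$); the multiplier $w$, whose growth is absorbed into the weighted norms (the weight $1+|w|^n$ on the $L^1$ side, $1+|w|^\ell$ on the $L^\infty$ side—here one uses $|w||f|^{\gamma+1}\lesssim \|f\|_{L^\infty}^\gamma(1+|w|)|f|$ and splits $|w|\le |w|^\ell$-type estimates, which is exactly why $\ell\ge 1$ suffices); and the pointwise algebraic inequality $\big||a|^\gamma a-|b|^\gamma b\big|\le (\gamma{+}1)(|a|^\gamma+|b|^\gamma)|a-b|$ for the Lipschitz estimate. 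This yields $\|\Psi[f](t)-\Psi[\tilde f](t)\|_X\le C_T R^\gamma t^{1/2}\|f-\tilde f\|_{C([0,T];X)}$, whence contraction for $T$ small depending only on $L$; the same computation with $\tilde f\equiv\mathscr{F}[\fin]$ gives the self-mapping property. The extra regularity $\nu(t)^{1/2}\nabla f\in C_b((0,T);X_{\ell,1})$ follows by differentiating~\eqref{eq:116}, applying~\eqref{eq:FPsemigr} with $k=1$ to the first term and $k=2$ to the Duhamel term (the exponent $-1-\tfrac12$ is still integrable after pairing with $\nu(t)^{1/2}$), and running the same fixed-point scheme in the augmented norm—or, more cheaply, by a bootstrap: once $f\in C([0,T];X)$ is known, the right-hand side of the differentiated equation is controlled. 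The regularised problem~\eqref{eq:fpnreg} is handled identically, replacing $|f|^\gamma f$ by $\vartheta_\ve(f)=f\eta_\ve(f)$, which is globally Lipschitz and of at most linear growth, so the estimates are only easier.

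For the qualitative properties, assuming $\fin\ge 0$: Smoothness~\ref{it:Cinfty} follows from parabolic bootstrapping—the mild solution lies in $L^\infty_\loc$ with $\nabla f\in L^\infty_\loc$, so the drift term $\divv(vh(f))$ has locally bounded coefficients and interior Schauder/$L^p$-parabolic regularity theory upgrades $f$ to $C^{1,2}((0,T^*)\times\mathbb{R}^d)$, after which~\eqref{eq:fpn} holds classically (standard for Fokker--Planck, cf.~\cite{CLR_2009}). Mass conservation~\ref{it:mass} is obtained by integrating~\eqref{eq:fpn} over $\mathbb{R}^d$ and using the moment bounds to justify that the flux $\nabla f+vh(f)$ contributes no boundary term at infinity (again as in~\cite{CLR_2009}); the weighted spaces guarantee enough decay. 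Preservation of radial symmetry~\ref{it:isotropy} is immediate from uniqueness: if $\fin$ is isotropic then for any rotation $R\in O(d)$ the function $f(t,Rv)$ is also a mild solution with the same data, hence equals $f(t,v)$.

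Positivity~\ref{it:pos}, $L^1$-contractivity~\ref{it:contr} and comparison~\ref{it:cp} are the places where a little care is needed, and I expect \ref{it:cp} (equivalently the combination \ref{it:pos}+\ref{it:contr}) to be the main obstacle, since the nonlinearity $h(f)=f+|f|^\gamma f$ (or $h_\ve$) in the drift is not Lipschitz globally and the comparison is not for a divergence-form equation with fixed coefficients. My plan is to prove these at the level of classical solutions on $(0,T^*)$, using that $f,f^{(i)}$ are $C^{1,2}$ there. For positivity: once $f$ is $C^{1,2}$, write the equation as $\partial_t f=\Delta f+v\cdot\nabla f + (d+\,\text{coeff})f$ with locally bounded coefficients (using $h(f)/f=1+|f|^\gamma$ bounded on compacts), and apply the classical weak maximum principle for linear parabolic equations on expanding balls, controlling the behaviour at spatial infinity via the moment/decay bounds—so $f\ge 0$. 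For $L^1$-contractivity and comparison, I would use Kato's inequality / the standard doubling-of-variables or test-function trick: set $w:=f^{(1)}-f^{(2)}$, test the difference equation with $\mathrm{sgn}_+(w)$ (suitably regularised), and observe that the difference of drift terms $\divv\big(v(h(f^{(1)})-h(f^{(2)}))\big)$, after integration by parts against $\mathrm{sgn}_+(w)\,\psi_R$ with a cutoff $\psi_R$, produces terms that vanish in the limit $R\to\infty$ thanks to the weighted integrability, and the monotonicity of $h$ makes the leading contribution have the right sign; this yields $\tfrac{\dd}{\dd t}\int (f^{(1)}-f^{(2)})_+\le 0$, giving both~\ref{it:contr} and (taking $\fin^{(1)}\le\fin^{(2)}$) comparison~\ref{it:cp}. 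Alternatively—and perhaps more robustly given the non-Lipschitz drift—one may first establish~\ref{it:pos},~\ref{it:contr},~\ref{it:cp} for the regularised problem~\eqref{eq:fpnreg} (where $h_\ve$ has linear growth and the analysis in~\cite{CLR_2009} applies almost verbatim) and obtain the statements for~\eqref{eq:fpn} on $[0,T^*)$ by the convergence $f_\ve\to f$ together with uniqueness; since the lemma's final sentence asks precisely for both problems, this route is natural. All of this is routine in spirit but the cutoff-at-infinity estimates must be checked against the specific weighted spaces $\XX$, which is the only genuinely technical point.
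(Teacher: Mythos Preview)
Your proposal is correct and follows essentially the same route as the paper: a Banach fixed-point argument for the Duhamel operator in $C([0,T];\XX)$ using the smoothing bound~\eqref{eq:FPsemigr}, followed by classical arguments (the paper simply cites~\cite[Sections~2.3--2.4]{CLR_2009}) for the qualitative properties~\ref{it:pos}--\ref{it:cp}. One minor technical difference: for the $L^\infty_\ell$ half of the $X$-norm estimate the paper does not stay in $L^\infty$ but passes through an intermediate $L^p_\ell$ with $p=d+1$, bounding $\||\cdot||f|^{\gamma+1}\|_{L^p_\ell}$ by a product of $\|f\|_{L^1_n}^{1/p}$ and a power of $\|f\|_{L^\infty_\ell}$ (this is where the condition $\ell,\gamma\ge1$ enters via the weight calculation $(\ell+1)p-\ell[(\gamma+1)p-1]\le 1\le n$); your sketched direct $L^\infty_\ell\to L^\infty_\ell$ estimate also works provided you use the \emph{weighted} bound $|f|^\gamma\le \|f\|_{L^\infty_\ell}^\gamma(1+|w|^\ell)^{-\gamma}$ (so that $|w|(1+|w|^\ell)^{-\gamma}$ is bounded by $\ell\gamma\ge1$), but as written your inequality $|w||f|^{\gamma+1}\lesssim\|f\|_{L^\infty}^\gamma(1+|w|)|f|$ with the \emph{unweighted} $L^\infty$ norm would leave an extra factor of $|w|$ and not close---so tighten that step.
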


\begin{proof}
The proof is similar to that of~\cite[Theorem~2.5]{CLR_2009}. Abbreviate $X:=\XX$.
To prove the existence of a mild solution $f\in C([0,T];X)$ of~\eqref{eq:fpn}
we show that the operator
\begin{align}\label{eq:116T}
\mathcal{T}[f](t):=	\mathscr{F}[\fin](t) +
\int_0^t\ee^{-(t-s)}\int_{\mathbb{R}^d}
	\nabla_v\mathcal{F}(t{-}s,v,w)\cdot w\,|f|^{\gamma}f|_{(s,w)}\,\dd w\,\dd s
\end{align}
defines a contraction mapping on a closed ball in $C([0,T];X)$ provided $T=T(\|\fin\|_X)>0$ is small enough.
For this purpose, we rely on~\eqref{eq:FPsemigr} and estimate 
	\begin{align}
		\|\mathcal{T}[f](t)\|_{L^1_n}&\le C_T\|\fin\|_{L^1_n} 
		+ C_T\int_0^t\nu(t-s)^{-\frac{1}{2}}\||\cdot||f(s)|^{\gamma+1}\|_{L^1_n}\,\dd s,
	\end{align}
where $|\cdot||f(s)|^{\gamma+1}$ denotes the function $w\mapsto|w||f(s,w)|^{\gamma+1}$.
We next observe that
	\begin{align}
		\||\cdot||f|^{\gamma+1}\|_{L^1_n(\mathbb{R}^d)}&\le
		\int_{\mathbb{R}^d}(1+|w|^n)(1+|w|)|f(w)|^{\gamma+1}\,\dd w
		\\&\le \int_{\mathbb{R}^d}(1+|w|^n)|f(w)|\,\dd w
		\;\| (1+|\cdot|)|f|^\gamma\|_{L^\infty}
		\\&\le \|f\|_{L^1_n}\|f\|_{L^\infty_{1}}^\gamma,
	\end{align}
	where the last step uses the fact that $\gamma\ge1$.
	
	Next, we estimate for $p:=d+1$
	\begin{align}
		\|\mathcal{T}[f](t)\|_{L^\infty_\ell}&\le C_T\|\fin\|_{L^\infty_\ell} 
		+ C_T\int_0^t\nu(t-s)^{-\frac{1}{2}-\frac{d}{2p}}
		\||\cdot||f(s)|^{\gamma+1}\|_{L^p_\ell}\,\dd s
	\end{align}
	and 
	\begin{align}
		\||\cdot||f|^{\gamma+1}\|_{L^p_\ell(\mathbb{R}^d)}^p&\le
		\int_{\mathbb{R}^d}(1+|w|^{\ell})^{p}(1+|w|^p)|f(w)|^{(\gamma+1)p}\,\dd w
		\\&\le C\int_{\mathbb{R}^d}(1+|w|^{(\ell+1)p})\big(1+|w|^{\ell[(\gamma+1)p-1]}\big)^{-1}|f|\,\dd w
		\;\| (1+|\cdot|^\ell) f\|_{L^\infty}^{(\gamma+1)p-1}
		\\[-.3cm]&\le C\|f\|_{L^1_n}\| f\|_{L^\infty_\ell}^{(\gamma+1)p-1},
	\end{align}
where the last step uses the fact that
\begin{align}
	(\ell+1)p-\ell[(\gamma+1)p-1]=	\ell p+p-\ell p-\ell\gamma p+\ell 
	=\ell+d+1-\ell(d+1)\gamma\le 1\le n,
\end{align}
which follows from the choice $p=d+1$, $\ell\ge1$ and $\gamma\ge1$.

In combination, this shows that the mapping $\mathcal{T}$ obeys an estimate of the form
\begin{align}
		\|\mathcal{T}[f](t)\|_{X}\le C_T\|\fin\|_{X} +C_T\kappa(T)\|f\|_{C([0,T];X)}^{\gamma+1},\quad t\in[0,T],
\end{align}
for some function $\kappa\in C([0,\infty))$ that satisfies $\kappa(0)=0$.

Using the above estimates and analogous bounds for the difference $\mathcal{T}[f]-\mathcal{T}[\tilde f]$, one may now follow~\cite{CLR_2009} to show the contraction mapping property of $\mathcal{T}$ and deduce the existence of  a fixed point $f\in C([0,T];X)$ for small enough $T$ as asserted in Lemma~\ref{l:locex.mild}. By construction, this fixed point is a mild solution of~\eqref{eq:fpn}. 
 The extra regularity $t\mapsto \nu(t)^\frac{1}{2}|\nabla f(t)|\in C_b((0,T);(L^\infty_\ell\cap L^1_1)(\mathbb{R}^d))$ follows from similar arguments (see~\cite[Section~2.2]{CLR_2009}) combined with the uniqueness of mild solutions in 
$C([0,T];(L^\infty\cap L^p_1)(\mathbb{R}^d))$ for $p>d$ shown in Lemma~\ref{l:uniq.mild}.
(Of course, the contraction mapping property, whose proof we have not presented in full detail, also provides uniqueness.)

The properties~\ref{it:pos}--\ref{it:cp} can be deduced from classical arguments as in~\cite[Sections~2.3 and 2.4]{CLR_2009} (see also~\cite{LSU_1968,QS_2019}).
	
The analogous results for the regularised problem~\eqref{eq:fpnreg} are obtained along the same lines using in particular the bound $0\le\eta_\ve(f)\le|f|^\gamma$, where $\eta_\ve$ is defined in the line above~\eqref{eq:555}.
\end{proof}

\begin{lemma}[Uniform moment bound]\label{l:moments}
	Assume that $\ell\ge1$, $n\ge2$, and let $\fin\in \XX$ be non-negative. 
	Denote by $f_\ve\in C([0,T^*);\XX)$ the non-negative (local-in-time) mild solution of~\eqref{eq:fpnreg} as obtained in Lemma~\ref{l:locex.mild}.
	Then, for all $t\in[0,T^*)$, 
	\begin{align}\label{eq:204}
		\int_{\mathbb{R}^d}f_\ve(t,v)(1+|v|^n)\,\dd v \le C(n,d)\|\fin\|_{L^1_n}.
	\end{align}
\end{lemma}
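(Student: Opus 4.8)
The plan is to prove the uniform moment bound \eqref{eq:204} by a differential inequality for the weighted mass $M_n(t):=\int_{\mathbb{R}^d}f_\ve(t,v)(1+|v|^n)\,\dd v$. Since $f_\ve$ is a classical solution of \eqref{eq:fpnreg} on $(0,T^*)\times\mathbb{R}^d$ (by the analogue of Lemma~\ref{l:locex.mild}\ref{it:Cinfty}) with sufficient decay in $v$ (by the extra regularity and the $L^1_n$-control provided by membership in $C([0,T^*);\XX)$), I would multiply the equation $\partial_tf_\ve=\nabla\cdot(\nabla f_\ve+vh_\ve(f_\ve))$ by the weight $\langle v\rangle^n:=1+|v|^n$ and integrate over $\mathbb{R}^d$, justifying the integration by parts by the decay of $f_\ve$ and $\nabla f_\ve$. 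This yields
\begin{align*}
	\frac{\dd}{\dd t}M_n(t)=\int_{\mathbb{R}^d}\big(\nabla f_\ve+vh_\ve(f_\ve)\big)\cdot\nabla\big({-}\langle v\rangle^n\big)\,\dd v\cdot({-}1)
	=\int_{\mathbb{R}^d}\Delta\langle v\rangle^n\,f_\ve\,\dd v-\int_{\mathbb{R}^d}v\cdot\nabla\langle v\rangle^n\,h_\ve(f_\ve)\,\dd v.
\end{align*}

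The two terms are estimated as follows. For the first, $\Delta\langle v\rangle^n=n(n{+}d{-}2)|v|^{n-2}\le C(n,d)\langle v\rangle^{n}$ (using $n\ge2$ so that the exponent is nonnegative and the inequality holds trivially for $|v|\le1$), hence this term is bounded by $C(n,d)M_n(t)$. For the second (the crucial dissipative term), $v\cdot\nabla\langle v\rangle^n=n|v|^n\ge0$, and since $h_\ve(s)\ge s$ for $s\ge0$ we get $-\int n|v|^nh_\ve(f_\ve)\,\dd v\le -n\int|v|^nf_\ve\,\dd v=-n\,M_n(t)+n\,M_0(t)$, where $M_0(t)=\|f_\ve(t)\|_{L^1}=\|\fin\|_{L^1}$ by mass conservation (Lemma~\ref{l:locex.mild}\ref{it:mass}, which absorbs the contribution of the constant part of the weight). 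Combining, $\frac{\dd}{\dd t}M_n(t)\le C(n,d)M_n(t)$ — but this only gives exponential growth, not a bound uniform in $t$. To get the stated time-uniform estimate one should exploit that the drift term is strongly confining: splitting $\{|v|\le R\}$ and $\{|v|>R\}$ for $R=R(n,d)$ large enough that $C(n,d)|v|^{n-2}\le\frac{n}{2}|v|^n$ on $\{|v|>R\}$, the good term dominates the Laplacian term at large $|v|$, giving $\frac{\dd}{\dd t}M_n(t)\le -\frac{n}{2}M_n(t)+C'(n,d)\|\fin\|_{L^1}$, and Grönwall then yields $M_n(t)\le M_n(0)+\frac{2C'(n,d)}{n}\|\fin\|_{L^1}\le C(n,d)\|\fin\|_{L^1_n}$ for all $t\in[0,T^*)$, as claimed.

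The main technical obstacle is the rigorous justification of the integration by parts and of differentiating under the integral sign: a priori one only knows $f_\ve(t)\in X_{\ell,n}$ with $n$ possibly just equal to $2$, so $|v|^nf_\ve$ is integrable but $|v|^{n}|\nabla f_\ve|$ need not obviously be, and the boundary terms at infinity must be shown to vanish. The standard remedy, which I would adopt, is to first run the argument with the truncated weight $\langle v\rangle^n\chi(v/\rho)$ for a smooth cutoff $\chi$, or equivalently with $\min\{\langle v\rangle^n,\rho\}$, obtaining an inequality with $\rho$-dependent error terms that are controlled uniformly using only $f_\ve(t)\in L^1_n$ together with the parabolic smoothing bound $\nu(t)^{1/2}\nabla f_\ve(t)\in X_{\ell,1}$ from Lemma~\ref{l:locex.mild}; then letting $\rho\to\infty$ by monotone/dominated convergence recovers the differential inequality for $M_n$. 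An alternative, perhaps cleaner, route is to integrate the Duhamel formula \eqref{eq:115} directly against $\langle v\rangle^n$ and use the semigroup bounds \eqref{eq:FPsemigr} together with $\vartheta_\ve(s)=s\eta_\ve(s)\le s\,|s|^\gamma$; here the confining structure of $\mathcal{F}$ (the factor $\ee^{-(t-s)}$ in \eqref{eq:116} and the contraction of $G_{\nu(t)}$) furnishes the decay in $t$. Either way, once the integrability issues are dispatched the estimate is routine; I expect the write-up to favour the truncated-weight argument since it stays at the level of the PDE and mirrors the classical moment-propagation computations for Fokker--Planck equations.
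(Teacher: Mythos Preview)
Your proposal is correct and rests on the same core computation as the paper: multiply \eqref{eq:fpnreg} by a moment weight, integrate by parts, and use $h_\ve(s)\ge s$ so that the drift term provides confinement. The rigorous justification you sketch (smooth cutoff, then pass to the limit) is exactly what the paper does.

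The one organisational difference is in how the differential inequality is closed. The paper works with the pure moments $E_k(t)=\int|v|^kf_\ve\,\dd v$ and obtains the clean ODE $\tfrac{1}{k}\tfrac{\dd}{\dd t}E_k\le(k{-}2{+}d)E_{k-2}-E_k$, which it iterates inductively in steps of two (starting from mass conservation $E_0\equiv m$, then $E_2$, then interpolating to non-even $k$, and continuing up to $n$). Your approach instead closes the inequality for $M_n$ in one shot by an $R$-splitting that lets the drift dominate the Laplacian term for $|v|>R$, so that only $M_0$ enters as a source. Your route is slightly more direct and avoids the induction/interpolation; the paper's route yields the sharper pointwise ODE for each moment but needs the inductive bookkeeping. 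Both give the same conclusion with constants depending only on $n,d$.
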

We emphasize that the constant $C(n,d)<\infty$ in the above lemma is independent of $\ve$ and $T^*$. Moreover, the bound~\eqref{eq:204} equally holds for the local mild solution of~\eqref{eq:fpn}.
\begin{proof}
	Let us first provide the formal argument leading to the above estimate.
	We abbreviate $f:=f_\ve$ and define for $k\in[0,\infty)$
	\begin{align}
	 E_k(t) = \int_{\mathbb{R}^d}|v|^kf(t,v)\,\dd v.
	\end{align}
Then, by Lemma~\ref{l:locex.mild}, $E_0(t)\equiv \|\fin\|_{L^1(\mathbb{R}^d)}=:B_0$.
Clearly, $B_0\le \|\fin\|_{L^1_n}$.

We now argue inductively and assume that $\sup_{t\in[0,T^*)}E_{k-2}(t)\le B_{k-2}$ for some $k\in[2,n]$ and a positive constant $B_{k-2}$ obeying the bound $B_{k-2}\le C\|\fin\|_{L^1_n}$ with $C=C(n,d)$. 
Formally, we may then compute
\begin{align}\label{eq:mom.comp}
	\begin{aligned}
	\frac{1}{k}\frac{\dd}{\dd t}E_k(t)
	&= - \int_{\mathbb{R}^d}|v|^{k-2}v\cdot(\nabla f+vh_\ve(f))\,\dd v
	\\&= \int_{\mathbb{R}^d}\divv(|v|^{k-2}v) f\,\dd v
	 -\int_{\mathbb{R}^d}|v|^{k}h_\ve(f)\,\dd v
	 \\&\le (k-2+d)\int_{\mathbb{R}^d}|v|^{k-2} f\,\dd v
	  -\int_{\mathbb{R}^d}|v|^{k}f\,\dd v
	  \\& = (k-2+d)E_{k-2}(t) - E_k(t),
  \end{aligned}
\end{align}
which implies that 
\begin{align}\label{eq:Ek}
	E_k(t)\le \max\{E_k(0),(k-2+d)B_{k-2}\}=:B_k, \qquad t\in[0,T^*).
\end{align}
Since $k\le n$, the new upper bound $B_k$ again satisfies the estimate $B_{k}\le C\|\fin\|_{L^1_n}$ for some possibly larger constant $C=C(n,d)$.

We now let $k=2$ in the above step to find that $\sup_tE_2(t)\le \max\{E_2(0),dB_0\}=B_2$.
By interpolation we infer that $E_k(t)\le B_2^\frac{k}{2}B_0^\frac{2-k}{2}=:B_k$ for all $k\in(0,2)$ and all $t\in[0,T^*).$
Observe that $B_k\le C\|\fin\|_{L^1_n}$ for all $k\in(0,2)$.
We may now complete the induction argument:  starting with $k-2=n-2\floor*{n/2}$ (which lies in $[0,2)$) and iterating the above induction step $\floor*{n/2}$ times, we arrive at the bound $\sup_t E_n(t)\le C(n,d)\|\fin\|_{L^1_n}$.
	
Finally, let us note that the computation~\eqref{eq:mom.comp} can be made rigorous by introducing a smooth, compactly supported cut-off function $\vp_R$, $R\ge1$, with $\vp_R(v)=\vp(R^{-1}v)$ for some $\vp\in C^\infty_c(\mathbb{R}^d)$ satisfying $0\le\vp\le1$ and $\vp\equiv 1$ on $\{|v|\le 1\}$. The time derivative of 
$t\mapsto\frac{1}{k}\int|v|^kf(t,v)\vp_R(v)\,\dd v$ then satisfies an inequality which leads to~\eqref{eq:mom.comp}--\eqref{eq:Ek} in the limit $R\to\infty$.
\end{proof}

Global existence for~\eqref{eq:fpnreg} in $\XX$ will be obtained under the decay conditions
\begin{align}\label{eq:paramX.r}
	\begin{aligned}
		&	\ell\ge 1,
		\\& n\ge\ell+d+1.
	\end{aligned}
\end{align}
\begin{proposition}[Global existence for~\eqref{eq:fpnreg}]\label{prop:ex.reg}
	Let $\ve\in(0,1]$.
	Let $\ell,n$ satisfy~\eqref{eq:paramX.r}, and suppose that $\fin\in \XX$ is non-negative. 
	There exists a unique global-in-time mild solution $f_\ve\in C([0,\infty);\XX)$ of the Cauchy problem~\eqref{eq:fpnreg}. 	
\end{proposition}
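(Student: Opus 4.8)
The plan is to upgrade the local-in-time existence from Lemma~\ref{l:locex.mild} to a global statement by means of an a priori bound on $\|f_\ve(t)\|_X = \|f_\ve(t)\|_{\XX}$ that grows at most at a controlled rate on any finite time interval, so that the standard continuation principle for mild solutions applies. The decisive feature of the regularised problem is that $h_\ve$ grows only linearly at infinity; more precisely $0\le\vartheta_\ve(s) = s\eta_\ve(s)\le \ve^{-\gamma}\|\eta\|_\infty\, s$ and, since $\eta'(s)=0$ for $s\ge 2$, in fact $\vartheta_\ve(s)\le C(\ve) s$ with $C(\ve)=2\ve^{-\gamma}\sup\eta/2$ or similar. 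This linear bound is what rescues the fixed-point argument from the superlinearity that causes blow-up in \eqref{eq:fpn}. First I would fix $\fin\in\XX$ nonnegative with $\ell,n$ as in \eqref{eq:paramX.r}, and let $[0,T^*)$ be the maximal interval of existence of the nonnegative mild solution $f_\ve$ furnished by Lemma~\ref{l:locex.mild}.

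\textbf{Step 1: uniform $L^1_n$ control.} By Lemma~\ref{l:moments} (which requires exactly $\ell\ge1$, $n\ge2$, both implied by \eqref{eq:paramX.r}), we already have $\sup_{t\in[0,T^*)}\|f_\ve(t)\|_{L^1_n}\le C(n,d)\|\fin\|_{L^1_n}$, a bound independent of $\ve$ and of $T^*$. This disposes of one of the two norms defining $\|\cdot\|_X$; it remains only to control $\|f_\ve(t)\|_{L^\infty_\ell}$.

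\textbf{Step 2: $L^\infty_\ell$ bound via the Duhamel formula.} Writing the analogue of \eqref{eq:116} for \eqref{eq:fpnreg} (replacing $|f|^\gamma f$ by $\vartheta_\ve(f_\ve)$) and estimating as in the proof of Lemma~\ref{l:locex.mild} with $p:=d+1$, one gets
\begin{align*}
\|f_\ve(t)\|_{L^\infty_\ell}\le C_T\|\fin\|_{L^\infty_\ell}+C_T\int_0^t\nu(t-s)^{-\frac12-\frac{d}{2p}}\big\||\cdot|\,\vartheta_\ve(f_\ve(s))\big\|_{L^p_\ell}\,\dd s.
\end{align*}
Now $|v|\vartheta_\ve(f_\ve)\le C(\ve)\,|v|f_\ve$, and since $f_\ve\in L^\infty_\ell\cap L^1_n$ with $n\ge\ell+d+1$ one interpolates exactly as in the proof of Lemma~\ref{l:locex.mild} (the exponent identity $(\ell+1)p-\ell(2p-1)=\ell+d+1-\ell(d+1)\le 1\le n$, now with $\gamma$ effectively replaced by $1$) to bound $\||\cdot|\,\vartheta_\ve(f_\ve(s))\|_{L^p_\ell}\le C(\ve)\,\|f_\ve(s)\|_{L^1_n}^{1/p}\|f_\ve(s)\|_{L^\infty_\ell}^{2-1/p}$. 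Since the singular kernel $\nu(t-s)^{-1/2-d/(2p)}$ is integrable (the exponent $\tfrac12+\tfrac{d}{2p}=\tfrac12+\tfrac{d}{2(d+1)}<1$), inserting the Step~1 bound on $\|f_\ve(s)\|_{L^1_n}$ yields a closed singular Gronwall-type inequality for $t\mapsto\|f_\ve(t)\|_{L^\infty_\ell}$. By the singular Gronwall inequality (\cite[Theorem~3.3.1]{Amann_1995}), this gives a finite bound $\|f_\ve(t)\|_{L^\infty_\ell}\le \Psi_\ve(T)$ on every $[0,T]$ with $T<T^*$, where $\Psi_\ve(T)<\infty$ for all finite $T$.

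\textbf{Step 3: continuation.} Combining Steps 1 and 2, $\sup_{[0,T]}\|f_\ve(t)\|_X<\infty$ for every $T<T^*$. If $T^*<\infty$, then since the local existence time $T(L)$ in Lemma~\ref{l:locex.mild} depends only on an upper bound $L$ for the $X$-norm of the datum, one could restart the solution from $f_\ve(T^*-\delta)$ for small $\delta$ and extend it past $T^*$, contradicting maximality; hence $T^*=\infty$. Uniqueness is already contained in Lemma~\ref{l:uniq.mild} (applied with $p=\ell$ or $p=n$, which exceed $d$). The main obstacle is purely bookkeeping: ensuring the interpolation exponents in Step~2 close up the Gronwall argument with an integrable singular kernel — but this is exactly the computation already performed for \eqref{eq:fpn} in Lemma~\ref{l:locex.mild}, and it is \emph{easier} here because $\vartheta_\ve$ is linearly bounded rather than of order $f^{\gamma+1}$, so no blow-up can occur; the $\ve$-dependence of the constants is immaterial since only global existence for fixed $\ve$ is claimed.
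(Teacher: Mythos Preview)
Your approach is essentially the paper's: control $L^1_n$ via Lemma~\ref{l:moments}, then close an $L^\infty_\ell$ bound through the Duhamel formula and singular Gronwall, using the linear growth of $\vartheta_\ve$. However, there is a bookkeeping slip in Step~2 that, taken at face value, breaks the argument.

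You correctly note $\vartheta_\ve(s)\le C(\ve)s$, so the term to estimate is $\||\cdot|f_\ve\|_{L^p_\ell}$, i.e.\ the nonlinearity is $f^{0+1}$, not $f^{1+1}$. Thus ``$\gamma$ effectively replaced by $1$'' should read ``$\gamma$ effectively replaced by $0$''. The corresponding exponent identity is $(\ell+1)p-\ell(p-1)=\ell+d+1\le n$ (this is exactly the paper's computation and is where~\eqref{eq:paramX.r} enters), and the interpolation yields
\[
\||\cdot|\vartheta_\ve(f_\ve)\|_{L^p_\ell}\le C(\ve)\,\|f_\ve\|_{L^1_n}^{1/p}\|f_\ve\|_{L^\infty_\ell}^{1-1/p},
\]
with exponent $1-1/p<1$, not $2-1/p>1$. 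Your stated exponent $2-1/p$ would give a \emph{superlinear} integral inequality for $\|f_\ve(t)\|_{L^\infty_\ell}$, and singular Gronwall would then only reproduce local existence, not preclude finite-time blow-up. With the correct sublinear exponent the argument closes; the paper additionally applies Young's inequality $ab\le\tfrac1pa^p+\tfrac{p-1}{p}b^{p/(p-1)}$ to turn the right-hand side into $C(\ve)\|f_\ve\|_{L^1_n}+C(\ve)\|f_\ve\|_{L^\infty_\ell}$, after which the \emph{linear} singular Gronwall of~\cite[Theorem~3.3.1]{Amann_1995} applies directly. A minor point: for uniqueness, $p=\ell$ need not exceed $d$ under~\eqref{eq:paramX.r}; but uniqueness is already contained in the contraction-mapping argument of Lemma~\ref{l:locex.mild}, so no separate appeal to Lemma~\ref{l:uniq.mild} is needed.
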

Note that, as a consequence of Lemma~\ref{l:locex.mild}, the function $f_\ve$ in the above proposition enjoys the additional properties~\ref{it:pos}--\ref{it:isotropy}. In particular, it is a classical solution of~\eqref{eq:fpnreg} in $(0,\infty)\times\mathbb{R}^d$.

\begin{proof}
Local-in-time wellposedness of~\eqref{eq:fpnreg} in $X:=\XX$ follows from Lemma~\ref{l:locex.mild}.
Thus, for proving global existence it suffices to show that, for $\ve>0$ fixed, $\|f_\ve(t)\|_{X}$ cannot blow up in finite time. For this purpose, let $T<\infty$ and suppose that 
$f_\ve\in C([0,T);X)$ is a mild solution of~\eqref{eq:fpnreg} on the interval $[0,T)$.
Since $n\ge2$, we may invoke Lemma~\ref{l:moments} to infer that $\|f_\ve(t)\|_{L^1_n}$ remains bounded uniformly in time:
\begin{align}\label{eq:L1n.uni}
	\sup_{t\in[0,T)}\|f_\ve(t)\|_{L^1_n}\le C\|\fin\|_{L^1_n}<\infty.
\end{align}
Next, we let $p:=d+1$ and estimate for $t\in[0,T)$
\begin{align}\label{eq:Linfty.unif}
	\|f_\ve(t)\|_{L^\infty_\ell}&\le C_T\|\fin\|_{L^\infty_\ell} 
	+ C_T\int_0^t\nu(t-s)^{-\frac{1}{2}-\frac{d}{2p}}
	\|f_\ve\eta_\ve(f_\ve)\|_{L^p_{\ell+1}}\,\dd s,
\end{align}
where we used the fact that $\||\cdot|\tilde f(\cdot)\|_{L^p_{\ell}}\le 2\|\tilde f\|_{L^p_{\ell+1}}$ for $\tilde f\in L^p_{\ell+1}(\mathbb{R}^d)$.
Since $$ (\ell+1)p-\ell(p-1)= \ell+d+1\le n$$
and hence $(1+|w|^{\ell+1})^p\lesssim (1+|w|^n)(1+|w|^\ell)^{p-1}$,
 we further have
\begin{align}
	\|f_\ve\eta_\ve(f_\ve)\|_{L^p_{\ell+1}}^p&\le
	C(\ve)^p\int_{\mathbb{R}^d}(1+|w|^{\ell+1})^{p}|f_\ve|^p\,\dd w
\\&\le C(\ve)^p\,\|f_\ve\|_{L^1_n}\| f_\ve\|_{L^\infty_\ell}^{p-1}.
\end{align}
Hence, using the Young inequality $ab\le \frac{1}{p}a^p+\frac{p-1}{p}b^\frac{p}{p-1}$, we deduce
\begin{align}
	\|f_\ve\eta_\ve(f_\ve)\|_{L^p_{\ell+1}}&\le
	C(\ve)\,\|f_\ve\|_{L^1_n}+ C(\ve)\| f_\ve\|_{L^\infty_\ell}.
\end{align}
Inserting this bound into~\eqref{eq:Linfty.unif}, using~\eqref{eq:L1n.uni}, and applying the generalised Gronwall inequality~\cite[Theorem~3.3.1]{Amann_1995} yields
\begin{align}\label{eq:Linfty}
	\sup_{t\in[0,T)}\|f_\ve(t)\|_{L^\infty_\ell}&\le C_{T,\ve}(\|\fin\|_{L^\infty_\ell} +\|\fin\|_{L^1_n}) 
	\exp\left(C_{T,\ve}\right),
\end{align}
where the constants $C_{T,\ve}$ depend on $\ve,T$ and fixed parameters.
This shows that the unique local-in-time mild solution can be extended beyond the time $T$, and since $T\in(0,\infty)$ was arbitrary, the function $f_\ve$ extends to a unique global-in-time mild solution $f_\ve\in C([0,\infty);X)$. 
\end{proof}

For later reference, let us note the following consequence of the above theory.

\begin{corollary}[Short-time consistency]\label{cor:stc}
	Assume the hypotheses of Proposition~\ref{prop:ex.reg}.
	Let $f\in C([0,T];\XX)$ be a local-in-time mild solution of~\eqref{eq:fpn}, let $0<\epsilon_*<(\|f\|_{C([0,T];L^\infty)})^{-1}$ and $\ve\in(0,\epsilon_*]$.
	Then, since $h_\ve(s)=h(s)$ for $s\le \ve^{-1}$, the function $f$ is also
	the unique mild solution $f=f_\ve\in C([0,T];\XX)$ of~\eqref{eq:fpnreg} in $[0,T]$. In particular, as long as the mild solution of~\eqref{eq:fpn} obtained in Lemma~\ref{l:locex.mild} exists, the scheme $\{f_\ve\}_\ve$ trivially converges to this solution as $\ve\downarrow0$.
\end{corollary}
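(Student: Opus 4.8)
The statement is essentially a bookkeeping fact about the two Duhamel formulations, and the plan is to show that, under the stated smallness of $\ve$, equation~\eqref{eq:115} for~\eqref{eq:fpnreg} reduces to equation~\eqref{eq:115ori} for~\eqref{eq:fpn}, and then to quote the uniqueness lemma. The starting observation is that the regularised mobility coincides with the original one below the threshold $\ve^{-1}$: since $\eta$ is even and equals $s\mapsto s^\gamma$ on $[0,1]$, one has $\eta_\ve(s)=\ve^{-\gamma}\eta(\ve s)=|s|^\gamma$ whenever $|\ve s|\le 1$, hence $\vartheta_\ve(s)=s\eta_\ve(s)=|s|^\gamma s$ and $h_\ve(s)=h(s)$ for all $|s|\le\ve^{-1}$.

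First I would record the pointwise bound: for every $(t,v)\in[0,T]\times\mathbb{R}^d$ and every $\ve\in(0,\epsilon_*]$ one has $|f(t,v)|\le\|f\|_{C([0,T];L^\infty)}<\epsilon_*^{-1}\le\ve^{-1}$. (The hypotheses of Proposition~\ref{prop:ex.reg} actually force $\fin\ge0$, so by Lemma~\ref{l:locex.mild}~\ref{it:pos} the solution is non-negative; but only the size bound is needed here.) Consequently $\vartheta_\ve(f(s,w))=|f(s,w)|^\gamma f(s,w)$ for a.e.\ $(s,w)\in[0,T]\times\mathbb{R}^d$, so the nonlinear term $\divv_w(w\,\vartheta_\ve(f))$ in~\eqref{eq:115} coincides with $\divv_w(w\,|f|^\gamma f)$ in~\eqref{eq:115ori} (the same identification of course applies to the integrated-by-parts versions such as~\eqref{eq:116}). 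Since $f$ solves~\eqref{eq:115ori} on $[0,T]$ by hypothesis, it therefore also solves~\eqref{eq:115}; that is, $f$ is a mild solution of~\eqref{eq:fpnreg} on $[0,T]$ lying in $C([0,T];\XX)$.

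Next I would invoke uniqueness for~\eqref{eq:fpnreg}. To apply Lemma~\ref{l:uniq.mild} it suffices to note the embedding $\XX=(L^\infty_\ell\cap L^1_n)(\mathbb{R}^d)\hookrightarrow(L^\infty\cap L^p_1)(\mathbb{R}^d)$ with $p:=d+1>d$: indeed $L^\infty_\ell\subset L^\infty$, while $\||\cdot|^p g^p\|_{L^1}\le\|g\|_{L^\infty}^{p-1}\||\cdot|^p g\|_{L^1}\le\|g\|_{L^\infty}^{p-1}\|g\|_{L^1_n}$ because $p=d+1\le n$ by~\eqref{eq:paramX.r}. Hence both $f$ (by the previous step) and the restriction to $[0,T]$ of the global mild solution $f_\ve\in C([0,\infty);\XX)$ furnished by Proposition~\ref{prop:ex.reg} belong to $C([0,T];(L^\infty\cap L^p_1)(\mathbb{R}^d))$ and both solve~\eqref{eq:fpnreg} in the mild sense on $[0,T]$; Lemma~\ref{l:uniq.mild} then yields $f=f_\ve$ on $[0,T]$.

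For the last assertion: let $f$ be the mild solution of~\eqref{eq:fpn} produced by Lemma~\ref{l:locex.mild} on its maximal existence interval $[0,T^*)$. For any fixed $T<T^*$ the constant $M_T:=\|f\|_{C([0,T];L^\infty)}$ is finite, and the foregoing shows $f_\ve\equiv f$ on $[0,T]$ for every sufficiently small $\ve>0$, namely $\ve<M_T^{-1}$; letting $\ve\downarrow0$, the scheme is eventually equal to $f$ and hence converges to it in $C([0,T];\XX)$, for every $T<T^*$. I do not expect any real obstacle in this argument; the only two points that warrant a line of justification are the pointwise identity $\vartheta_\ve(f)=|f|^\gamma f$ under the $L^\infty$-bound and the elementary weighted-space embedding needed to legitimise Lemma~\ref{l:uniq.mild}.
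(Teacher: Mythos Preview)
Your proposal is correct and follows exactly the reasoning the paper intends: the corollary is stated without a separate proof in the paper, the justification being already contained in the statement (``since $h_\ve(s)=h(s)$ for $s\le\ve^{-1}$'') together with the uniqueness of mild solutions. You have simply spelled out the two implicit steps---that the Duhamel formulations~\eqref{eq:115ori} and~\eqref{eq:115} coincide under the $L^\infty$-bound, and that Lemma~\ref{l:uniq.mild} applies via the embedding $\XX\hookrightarrow L^\infty\cap L^p_1$---and nothing more is needed.
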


\subsection{Uniform bounds}\label{ssec:unif.bd}
\subsubsection{Preliminaries}\label{sssec:prelim.ub}
From now on we assume hypothesis~\ref{it:HP.init}, which imposes a somewhat stronger decay condition on the initial data as compared to Proposition~\ref{prop:ex.reg}. 
In particular, $\fin\in X_{\ell,\ell+d+1}$ with $\ell=d$ (resp.\ $\ell>3d+1$) if $\fin$ is isotropic 
(resp.\ anisotropic).
Let us note that the specific regularity conditions in~\ref{it:HP.init} have been made for convenience, and we have not attempted to optimise them.

Let $f\in C([0,T];X_{\ell,\ell+d+1})$ denote the local mild solution of~\eqref{eq:fpn} obtained in Lemma~\ref{l:locex.mild}.
Then, replacing $f$ by the time-shifted solution $f(t_0+\cdot)$ emanating from $f(t_0)$ for some small $t_0\in(0,T/2)$,
we may henceforth assume, without loss of generality, the additional regularity 
$f\in C^{1,2}([0,T/2]\times \mathbb{R}^d)$ with $\nabla f\in C([0,T/2];L^\infty_d(\mathbb{R}^d))$
and moreover, that $f$ is strictly positive in $[0,T/2]\times\mathbb{R}^d$
(if $m>0$, the strict positivity of $f(t_0)$ follows from~\cite[Proposition~52.7]{QS_2019}).

Thus, from now on we may assume the following stronger version of hypothesis~\ref{it:HP.init}:
\begin{align}\label{eq:hpinit}\tag{(H2')}
	\text{(H2')}\begin{cases}
&\text{\ref{it:HP.init} and }
	\text{the local regular solution $f$}\text{ of~\eqref{eq:fpn} with }f(0)=\fin 
	\\&\text{ satisfies } f\in C^{1,2}([0,\tau_*]\times \mathbb{R}^d),\;\nabla f\in C([0,\tau_*];L^\infty_d(\mathbb{R}^d)),
	 f>0\text{ in }[0,\tau_*]\times\mathbb{R}^d
	 \\&\text{ for some fixed $\tau_*>0$.}
	\end{cases}
\end{align}
Furthermore, we henceforth denote by $f_\ve,$ $\ve\in(0,\epsilon_*],$ the global mild solution of the regularised equation~\eqref{eq:fpnreg} as obtained in Proposition~\ref{prop:ex.reg}, where $\epsilon_*\in(0,1]$ is chosen small enough such that 
\begin{align}\label{eq:stc}
	f_\ve\equiv f\text{ in }[0,\tau_*]\;\;\text{ for all }\ve\in(0,\epsilon_*]. 
\end{align}
Such $\epsilon_*$ exists in virtue of Corollary~\ref{cor:stc}.

\subsubsection{Isotropic solutions}\label{sssec:isotropic}

In this subsection we assume $\fin$ to be isotropic and write $\gin(r)=\fin(v)$, $|v|=r$.
By Proposition~\ref{prop:ex.reg}~\ref{it:isotropy}, the global mild solution $f_\ve$ of~\eqref{eq:fpnreg} is isotropic, allowing us to write $g_\ve(t,r):=f_\ve(t,v)$ for $r=|v|\ge0$.
Observe that $g_\ve$ satisfies the equation
\begin{equation}\label{eq:114}
	\begin{aligned}
	\partial_tg_\ve &= r^{-(d-1)}\partial_r\Big(r^{d-1}\partial_rg_\ve+r^dh_\ve(g_\ve)\Big) \text{ in }\mathbb{R}_+\times\mathbb{R}_+,
	\\ 0&=\lim_{r\to0}\Big(r^{d-1}\partial_rg_\ve+r^dh_\ve(g_\ve)\Big), 
\end{aligned}
\end{equation}
where the limit in the last line holds locally uniformly in $t\in[0,\infty)$.

Our fundamental a priori bound for~\eqref{eq:fpn} relies on the fact that, in the isotropic case, equation~\eqref{eq:fpnreg} can be expressed as an evolution equation for the partial mass function
\begin{align}\label{eq:208}
	M_\ve(t,r):=\int_0^rg_\ve(t,\rho)\rho^{d-1}\dd\rho=c_d^{-1}\int_{B_r}f_\ve(t,v)\,\dd v 
	\le c_d^{-1}\|\fin\|_{L^1(\mathbb{R}^d)},
\end{align}
where $c_d$ denotes the area of the unit sphere $\partial B_1$ in $\mathbb{R}^d$. The equation for $M_\ve$ is obtained by multiplying~\eqref{eq:114} by $r^{d-1}$ and integrating in $r$
\begin{align}\label{eq:120}
	\partial_tM_\ve=r^{d-1}\partial_rg_\ve+r^dh_\ve(g_\ve).
\end{align}
Using  the relations
\begin{align*}
	\partial_rM_\ve&=r^{d-1}g_\ve,
	\\\partial_r^2M_\ve&=r^{d-1}\partial_rg_\ve+\tfrac{(d-1)}{r}\partial_rM_\ve,
\end{align*}
one arrives at
\begin{equation}\label{eq:Mreg}
	\begin{cases}
	\begin{aligned}
	\partial_tM_\ve&=\partial_r^2M_\ve-\tfrac{(d-1)}{r}\partial_rM_\ve+r^dh_\ve(r^{1-d}\partial_rM_\ve),
	\quad &&t>0,\;r\in\mathbb{R}_+,\\	
	M_\ve(t,0)&=0, &&t>0,
	\\ M_\ve(0,r)&=\Min(r), && r\in\mathbb{R}_+.
\end{aligned}
\end{cases}
\end{equation}
We note that, as a consequence of~\eqref{eq:stc}, 
\begin{align}\label{eq:206}
	M_\ve\equiv M\text{ in }[0,\tau_*]\times[0,\infty)\;\text{ for all }\ve\in(0,\epsilon_*],
\end{align}
where $M(t,r)=c_d^{-1}\int_{B_r}f(t,v)\,\dd v$ with $f\in C([0,\tau_*];X_{d,2d+1})$ 
denoting the local-in-time mild solution of~\eqref{eq:fpn}. 
Hence, thanks to the regularity established in Lemma~\ref{l:locex.mild} and hypothesis~\ref{eq:hpinit} we can ensure that
 \begin{align}\label{eq:205}
 M\in C^{1,2}([0,\tau_*]\times[0,\infty))\;	\text{ with }\; \sup_{\tau\in [0,\tau_*]}\|\partial_tM(\tau,\cdot)\|_{L^\infty([0,\infty))}\le K<\infty,\qquad
 \end{align}
where the last estimate follows from~\eqref{eq:120} and the regularity 
$f\in C([0,\tau_*];L^\infty_d(\mathbb{R}^d))$, 
$\nabla f\in C([0,\tau_*];L^\infty_d(\mathbb{R}^d))$.

\begin{proposition}[Global Lipschitz regularity in time]\label{prop:cpbound} 
	Suppose that $\fin$ is isotropic and satisfies the hypotheses in~\ref{eq:hpinit}.
	Denote by $M_\ve$ the partial mass function~\eqref{eq:208} of the global solution $f_\ve$ of~\eqref{eq:fpnreg} obtained in Proposition~\ref{prop:ex.reg}. In particular, $M_\ve$ is a classical solution of equation~\eqref{eq:Mreg} satisfying~\eqref{eq:206},~\eqref{eq:205} and is such that $f_\ve$ enjoys the uniform moment bound~\eqref{eq:204} for $n=2d+1$. 
Then 
	\begin{align}\label{eq:apriori}
		\sup_{\ve\in(0,\epsilon_*]}\sup_{t,r>0}|\partial_tM_\ve(t,r)|\le \kcp,
	\end{align}
where
\begin{align}\label{eq:kcp}
	\kcp:=\max\{K,\tfrac{\tilde m}{\tau_*}\}<\infty
\end{align}
 with $K$ as in~\eqref{eq:205} and $\tilde m:=c_d^{-1}m=c_d^{-1}\|\fin\|_{L^1(\mathbb{R}^d)}$. 
\end{proposition}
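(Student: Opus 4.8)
The plan is to propagate the short‑time Lipschitz estimate \eqref{eq:205} to all later times by a comparison argument for the \emph{autonomous} quasilinear equation \eqref{eq:Mreg}, crucially exploiting that $M_\ve\equiv M$ on the slab $[0,\tau_*]\times[0,\infty)$ (cf.~\eqref{eq:206}). Fix $\ve\in(0,\epsilon_*]$ and a shift $h>0$, abbreviate $\mathcal{G}(r,p):=r^dh_\ve(r^{1-d}p)$ so that \eqref{eq:Mreg} reads $\partial_tM_\ve=\partial_r^2M_\ve-\tfrac{d-1}{r}\partial_rM_\ve+\mathcal{G}(r,\partial_rM_\ve)$, and set $P(t,r):=M_\ve(t+h,r)-M_\ve(t,r)$. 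Since $M_\ve(\cdot+h,\cdot)$ and $M_\ve$ are both classical solutions of \eqref{eq:Mreg} in $(0,\infty)\times(0,\infty)$, since $s\mapsto h_\ve(s)$ is $C^1$ on $[0,\infty)$, and since $\partial_rM_\ve=r^{d-1}g_\ve\ge0$ (so that below $h_\ve'$ is evaluated only on $[0,\infty)$, because $r^{1-d}[\partial_rM_\ve(t,r)+\sigma\partial_rP(t,r)]=(1-\sigma)g_\ve(t,r)+\sigma g_\ve(t+h,r)\ge0$), a fundamental‑theorem‑of‑calculus computation shows that $P$ solves a \emph{linear} parabolic equation with no zeroth‑order term,
\begin{gather*}
	\partial_tP=\partial_r^2P+b_\ve(t,r)\,\partial_rP\qquad\text{in }(0,\infty)\times(0,\infty),\\
	b_\ve(t,r):=-\tfrac{d-1}{r}+r\int_0^1 h_\ve'\!\big(r^{1-d}\big[\partial_rM_\ve(t,r)+\sigma\,\partial_rP(t,r)\big]\big)\,\dd\sigma,
\end{gather*}
where for fixed $\ve$ the coefficient $b_\ve$ is continuous, hence locally bounded, on $(0,\infty)\times(0,\infty)$. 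Consequently $\pm P-\kcp h$ obeys the parabolic maximum principle on bounded rectangles, and the task reduces to bounding $P$ (from both sides) by $\kcp h$ on the parabolic boundary.

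The decisive point is the estimate on the initial slice $\{t=0\}$, and it is here that the value of $\kcp$ in \eqref{eq:kcp} enters through a dichotomy in the size of $h$. If $0<h\le\tau_*$, then by \eqref{eq:206} we have $M_\ve(h,\cdot)=M(h,\cdot)$ and $M_\ve(0,\cdot)=M(0,\cdot)$, so \eqref{eq:205} gives $|P(0,r)|=\big|\int_0^h\partial_tM(s,r)\,\dd s\big|\le Kh\le\kcp h$ for every $r\ge0$. If instead $h>\tau_*$, I would use only the trivial a priori bound $0\le M_\ve(t,r)\le\tilde m$ from \eqref{eq:208} together with $\kcp\ge\tilde m/\tau_*$, which yields $|P(0,r)|\le\tilde m\le\kcp\tau_*\le\kcp h$. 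In either case $\sup_{r\ge0}|P(0,r)|\le\kcp h$; moreover $P(t,0)=0$ by the Dirichlet condition in \eqref{eq:Mreg}.

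To deal with the singular coefficient at $r=0$ and the unbounded $r$‑interval, I would fix $T<\infty$ and apply the maximum principle on the truncated rectangle $[\delta,R]\times[0,T]$, where $b_\ve$ is genuinely bounded, giving $\max_{[\delta,R]\times[0,T]}|P|\le\max\{\sup_r|P(0,r)|,\ \sup_{[0,T]}|P(\cdot,\delta)|,\ \sup_{[0,T]}|P(\cdot,R)|\}$. The first term is $\le\kcp h$ by the previous step. For the contribution at $r=\delta$ one uses $M_\ve(t,\delta)\le\frac{\delta^d}{d}\|f_\ve(t)\|_{L^\infty}$ together with $\sup_{t\in[0,T+h]}\|f_\ve(t)\|_{L^\infty}<\infty$ (Proposition~\ref{prop:ex.reg}), so $\sup_{[0,T]}|P(\cdot,\delta)|\le 2\sup_{[0,T+h]}M_\ve(\cdot,\delta)\to0$ as $\delta\downarrow0$. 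For the contribution at $r=R$ one invokes the \emph{uniform} moment bound \eqref{eq:204} with $n=2d+1$ (valid under the standing hypotheses of Proposition~\ref{prop:cpbound}): $\tilde m-M_\ve(t,R)=c_d^{-1}\int_{|v|>R}f_\ve(t,v)\,\dd v\le CR^{-(2d+1)}$ with $C$ independent of $t$ and $\ve$, whence $\sup_{[0,T]}|P(\cdot,R)|\le CR^{-(2d+1)}\to0$ as $R\uparrow\infty$. Letting $\delta\downarrow0$ and $R\uparrow\infty$ with $h$ fixed gives $\sup_{(0,\infty)\times[0,T]}|P|\le\kcp h$, and since $T<\infty$ was arbitrary, $|M_\ve(t+h,r)-M_\ve(t,r)|\le\kcp h$ for all $t,r\ge0$ and $h>0$. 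Dividing by $h$ and letting $h\downarrow0$ (using $M_\ve\in C^{1,2}$) yields $|\partial_tM_\ve(t,r)|\le\kcp$, and taking the supremum over $\ve\in(0,\epsilon_*]$ gives \eqref{eq:apriori}.

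The main obstacle is not any single estimate but the correct bookkeeping at the initial slice that produces exactly the constant $\kcp=\max\{K,\tilde m/\tau_*\}$ — in particular the observation that once the time shift $h$ exceeds the regularity horizon $\tau_*$, the bare mass bound already beats $\kcp h$, so no quantitative control of the flow on $[0,\tau_*]$ beyond \eqref{eq:205} is required. The remaining difficulties, the $1/r$‑singularity of the drift coefficient and the behaviour as $r\to\infty$, are handled routinely by the $[\delta,R]$‑truncation, using the pointwise bound $M_\ve(\cdot,\delta)\lesssim\delta^d$ near the origin and the uniform moment estimate \eqref{eq:204} at infinity; note in particular that the sign of the first‑order coefficient $b_\ve$ plays no role.
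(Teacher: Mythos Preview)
Your proof is correct and reaches the same constant $\kcp=\max\{K,\tilde m/\tau_*\}$, but the route differs from the paper's. The paper argues by contradiction in the doubled variables $(t,s,r)$: it supposes $M_\ve(t_1,r_1)-M_\ve(s_1,r_1)>\kcp|t_1-s_1|$, introduces the penalised function
\[
P(t,s,r)=M_\ve(t,r)-M_\ve(s,r)-\kcp|t-s|-\tfrac{\delta}{T-t}-\tfrac{\delta}{T-s},
\]
shows (using the moment bound at large $r$, continuity at $r=0$, and the relation $\kcp\tau_*\ge\tilde m$ at the lateral time boundaries) that $P$ attains a positive interior maximum, and derives a contradiction from the first- and second-order optimality conditions combined with~\eqref{eq:Mreg}. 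This is a viscosity-solution style comparison. You instead fix a single shift $h>0$, linearise the difference $P(t,r)=M_\ve(t+h,r)-M_\ve(t,r)$ via the mean-value theorem, and apply the classical maximum principle on truncated rectangles $[0,T]\times[\delta,R]$, sending $\delta\downarrow0$ and $R\uparrow\infty$.

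The two arguments use the same inputs (autonomy in time, the Dirichlet condition at $r=0$, the short-time bound~\eqref{eq:205} via~\eqref{eq:206}, the mass bound, and the moment estimate~\eqref{eq:204}), and your dichotomy $h\le\tau_*$ versus $h>\tau_*$ is exactly the mechanism that produces the constant $\kcp$ in the paper as well. Your approach is essentially the finite-difference version of the alternative the paper sketches in its closing remark (working with $N_\ve=\partial_tM_\ve$ directly); a pleasant feature of the finite-difference formulation is that the decay $|P(\cdot,R)|\to0$ follows from the uniform moment bound alone, whereas controlling $N_\ve(t,r)\to0$ as $r\to\infty$ would require the slightly stronger decay the paper mentions there. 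The paper's doubling-of-variables proof, on the other hand, avoids explicit linearisation and the truncation-in-$r$ limit procedure, trading them for a single interior-maximum contradiction.
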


\begin{proof}
	Let $\kcp$ be as in~\eqref{eq:kcp}. We will show by contradiction that 
	\begin{align*}
		\sup_{\ve\in(0,\epsilon_*]} \big(M_\ve(t,r)-M_\ve(s,r)\big)\le \kcp|t-s|
	\end{align*}
 for all $t,s,r>0$. 
 
	Suppose the last inequality is false for some $\ve>0$. Then there exist $t_1,s_1,r_1>0$ such that 
	\begin{align*}
		M_\ve(t_1,r_1)-M_\ve(s_1,r_1)-\kcp|t_1-s_1|>0.
	\end{align*}
	Pick some $ T\ge\max\{t_1,s_1\}\,{+}\,1$. Without loss of generality we further assume that $T>\tau_*$ with $\tau_*$ being as in~\eqref{eq:206},~\eqref{eq:205}.
	Then, for $\spi>0$ small enough, we have 
	\begin{align*}
		M_\ve(t_1,r_1)-\frac{\spi}{T-t_1}-\frac{\spi}{T-s_1}-M_\ve(s_1,r_1)-\kcp|t_1-s_1|>0
	\end{align*}
	and hence
	\begin{align*}
		\sup_{(t,s,r)\in Q} \Big(
			M_\ve(t,r)- M_\ve(s,r)-\kcp|t-s|-\frac{\spi}{T-t}-\frac{\spi}{T-s}
		\Big)>0,
	\end{align*}
	where $Q=(0,T)\times(0,T)\times(0,\infty)$.
	
	We assert that the function 
	\begin{align}
		 P(t,s,r):=M_\ve(t,r)- M_\ve(s,r)-\kcp|t-s|-\frac{\spi}{T-t}-\frac{\spi}{T-s}
	\end{align}
attains its (positive) supremum in the interior of $Q$. 
	This can be seen as follows: by the uniform continuity of $M_\ve$ on $[0,T]\times[0,1]$ and the fact that $M_\ve(\cdot,0)\equiv 0$, there exists $r'>0$ such that  $P<0$ in $[0,T]\times[0,T]\times[0,r']$.  Moreover, by~\eqref{eq:206} and~\eqref{eq:205}
	one has $P<0$ in $[0,\tau_*]\times[0,\tau_*]\times[0,\infty)$.  The bound $M_\ve\le \tilde m$ further shows that $P<0$ in $[0,T]\times[T-\epsilon,T]\times[0,\infty)$ and in 
	$[T-\epsilon,T]\times[0,T]\times[0,\infty)$ for some $\epsilon=\epsilon(\spi, \tilde m)>0$.
	Next, for all $\bar s\in[\tau_*,T]$ and $r\in[0,\infty)$, we have $P(0,\bar s,r)\le \tilde m-\kcp\tau_*-\frac{2\delta}{T}<0$ thanks to the choice of $\kcp$. Likewise, $P(\bar t,0,r)\le -\frac{2\delta}{T}$ for all $\bar t\in[t^*,T]$ and $r\in[0,\infty)$. 
		Hence, it remains to rule out the existence of a maximising sequence $(t_n,s_n,r_n)$ with $r_n\to\infty$. To this end, we take advantage of the bound~\eqref{eq:204} (for $n=2$) to estimate
	\begin{align*}
		P(t,s,r)&\le c_d^{-1}\int_{\mathbb{R}^d\setminus B_r}f_\ve(s,v)\,\dd v
		-\frac{2\spi}{T}
		\\&\le \frac{1}{c_d (1+r)}\int_{\mathbb{R}^d\setminus B_r}f_\ve(s,v)
		\,(1+|v|)\,\dd v-\frac{2\spi}{T}
		\\&\le  \frac{1}{c_d (1+r)}\|\fin\|_{L^1_2(\mathbb{R}^d)}-\frac{2\spi}{T}.
	\end{align*}
	Observe that the right-hand side is negative whenever $r\ge R_*$ for a finite radius $R_*=R_*(\|\fin\|_{L^1_2},T,\delta)$ large enough. Hence, the same is true for $P(t,s,r)$.

	 Thus, the supremum of $P$ must be attained at some interior point 
	 $p^*=(t,s,r)\in Q$. At the point $p^*$ we have the optimality conditions 
	\begin{align*}
		&\partial_tM_\ve(t,r)-\kcp\frac{t-s}{|t-s|}=\frac{\spi}{(T-t)^2},
		\\& -\partial_s M_\ve(s,r)+\kcp\frac{t-s}{|t-s|}=\frac{\spi}{(T-s)^2},
	\end{align*}
	and hence 
	\begin{align*}
		\partial_t M_\ve(t,r)-\partial_s M_\ve(s,r)=\frac{\spi}{(T-t)^2}+\frac{\spi}{(T-s)^2}.
	\end{align*}
	Moreover, 
	\begin{align*}
		\partial_rM_\ve(t,r)=\partial_r M_\ve(s,r)
	\end{align*}
	and thus
	\begin{align}\label{eq:296}
		h_\ve(r^{1-d}\partial_rM_\ve(t,r))-h_\ve(r^{1-d}\partial_rM_\ve(s,r))=0.
	\end{align}
Further note that $0\ge \partial_r^2P(t,s,r)=\partial_r^2M_\ve(t,r)-\partial_r^2M_\ve(s,r)$.

	In combination with equation~\eqref{eq:Mreg} we deduce at the point $(t,s,r)=p^*:$
	\begin{align*}
		0=   \partial_t M_\ve(t,r)-\partial_s M_\ve(s,r)-(\partial_r^2M_\ve(t,r)-\partial_r^2M_\ve(s,r))&
		\\ \ge \frac{\spi}{(T-t)^2}+\frac{\spi}{(T-s)^2}&>0,
	\end{align*}
	which is a contradiction. This completes the proof of Proposition~\ref{prop:cpbound}.
	
	Let us remark that, thanks to the smoothness of $M_\ve$, estimate~\eqref{eq:apriori} may alternatively be proved by directly considering the equation satisfied by $N_\ve:=\partial_tM_\ve$, at least if one assumes a slightly stronger decay hypothesis on $\fin$. Indeed, notice that positive constants 
	above $\sup N_\ve(0,\cdot)$
	of the problem for $N_\ve$ are supersolutions, while negative constants below $\inf N_\ve(0,\cdot)$ are subsolutions. And if $f_\ve\in C([0,T];L^\infty_\ell(\mathbb{R}^d))$ and $\nabla f_\ve\in C([0,T];L^\infty_{\ell-1}(\mathbb{R}^d))$ for some $\ell>d$, we may use~\eqref{eq:120} to find that 
	$N_\ve(t,r)\to0$ as $r\to\infty$, uniformly in $t\in[0,T]$.
\end{proof}

The comparison principle underlying the proof of Proposition~\ref{prop:cpbound} can further be used to deduce monotonicity in $\ve$ of $M_\ve(t,r)$.

\begin{proposition}[Monotonicity of the scheme]\label{prop:mon.scheme}
	Let the hypotheses of Proposition~\ref{prop:cpbound} hold. 
	For any $0<\ve'\le\ve\le\epsilon_*$
	\begin{align}
		M_{\ve'}(t,r)\ge M_\ve(t,r),\qquad t,r>0.
	\end{align}
\end{proposition}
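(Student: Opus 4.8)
The plan is to derive the monotonicity $M_{\ve'}\ge M_\ve$ from the comparison-principle structure of equation~\eqref{eq:Mreg}, exploiting the pointwise inequality $h_{\ve'}(s)\ge h_\ve(s)$ for $0<\ve'\le\ve$ and all $s\ge0$, which holds because of the stated monotonicity of $s\mapsto\eta(s)/s^\gamma$ (equivalently, $h_\ve(s)\le h_{\ve'}(s)\le h(s)$ as noted just below~\eqref{eq:555}). First I would observe that at time $t\le\tau_*$ one has equality $M_{\ve'}\equiv M_\ve\equiv M$ by~\eqref{eq:206}, so it suffices to work on $[\tau_*,\infty)$, and that both $M_\ve$ and $M_{\ve'}$ are classical solutions of~\eqref{eq:Mreg} with the \emph{same} boundary condition $M(t,0)=0$ and the same (vanishing) behaviour as $r\to\infty$. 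The idea is then to show $w:=M_\ve-M_{\ve'}\le0$ by a maximum-principle argument adapted to the nonlinearity.

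The key computation is to write down the equation satisfied by $w$. Subtracting the two copies of~\eqref{eq:Mreg} and using that $h_{\ve}(r^{1-d}\partial_rM_\ve)-h_{\ve'}(r^{1-d}\partial_rM_{\ve'})$ splits as
\begin{align*}
	h_{\ve}(r^{1-d}\partial_rM_\ve)-h_{\ve'}(r^{1-d}\partial_rM_\ve)
	+h_{\ve'}(r^{1-d}\partial_rM_\ve)-h_{\ve'}(r^{1-d}\partial_rM_{\ve'}),
\end{align*}
the first difference is $\le0$ pointwise (since $h_\ve\le h_{\ve'}$ and the argument $r^{1-d}\partial_rM_\ve=g_\ve\ge0$), while the second difference, by the mean value theorem, equals $b(t,r)\,r^{1-d}\partial_rw$ for some bounded non-negative coefficient $b$ (note $h_{\ve'}'\ge0$). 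Hence $w$ satisfies a linear parabolic \emph{inequality} of the form $\partial_tw\le\partial_r^2w-\tfrac{d-1}{r}\partial_rw+r\,b(t,r)\,\partial_rw$ on $\QQr$, with $w\le0$ at $t=\tau_*$, $w=0$ at $r=0$, and $w\to0$ as $r\to\infty$ locally uniformly in $t$. The plan is then to conclude $w\le0$ globally. To make the comparison rigorous despite the singular coefficient $\tfrac{d-1}{r}$ at $r=0$ and the unbounded domain, I would run essentially the same contradiction scheme as in the proof of Proposition~\ref{prop:cpbound}: assume $\sup w>0$, perturb by subtracting $\tfrac{\delta}{T-t}$ and using the moment bound~\eqref{eq:204} to push the supremum away from $r=0$, from $r=\infty$, from $t=\tau_*$ and from $t=T$, so that it is attained at an interior point where the optimality conditions ($\partial_tw\ge\delta/(T-t)^2>0$, $\partial_rw=0$, $\partial_r^2w\le0$) together with the differential inequality yield $0<\delta/(T-t)^2\le0$, a contradiction.

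I expect the main obstacle to be the careful handling of the nonlinear drift term and the singular coefficient near $r=0$ simultaneously: one must verify that the coefficient $b$ arising from $h_{\ve'}$ is indeed bounded (which uses that $h_{\ve'}$ is Lipschitz, being $C^{0,1}$ with a linearly growing—in fact eventually linear—profile) and, more delicately, that at the putative interior maximum the sign of the \emph{full} right-hand side is controlled; the term $r\,b\,\partial_rw$ vanishes there since $\partial_rw=0$, but one should double-check that the first (good-sign) difference $h_\ve-h_{\ve'}$ does not spoil the inequality—it only helps, since it is $\le0$. A secondary technical point is ensuring the decay $w(t,r)\to0$ as $r\to\infty$ uniformly on compact time intervals, which follows as in Proposition~\ref{prop:cpbound} from the uniform moment bound~\eqref{eq:204} applied to both $f_\ve$ and $f_{\ve'}$, exactly as in the estimate of $P$ there. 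Once these points are in place, the contradiction argument closes verbatim, giving $M_\ve\le M_{\ve'}$ on all of $\QQr$, hence the claimed monotonicity.
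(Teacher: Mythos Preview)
Your proposal is correct and follows essentially the same comparison-principle contradiction argument as the paper's proof. The paper proceeds slightly more directly: at the interior maximum of $\tilde P:=M_\ve-M_{\ve'}-\tfrac{\delta}{T-t}$ one has $\partial_rM_\ve=\partial_rM_{\ve'}$, so the arguments of $h_\ve$ and $h_{\ve'}$ coincide and the inequality $h_\ve\le h_{\ve'}$ applies immediately, making your mean-value-theorem decomposition unnecessary (though not incorrect).
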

\begin{proof}
To begin with, we recall that $h_\ve\le 	h_{\ve'}$ whenever $0<\ve'\le\ve$ because of the non-increase of the function $(0,\infty)\ni s\mapsto s^{-\gamma}\eta(s)$.

	The remaining reasoning is similar to the proof of Proposition~\ref{prop:cpbound}.
	By contradiction, one assumes that there exist $t_1,r_1>0$ such that $M_\ve(t_1,r_1)-M_{\ve'}(t_1,r_1)$ is positive.
	Next, one fixes a finite time horizon $T\ge t_1+1$ and picks $\delta>0$ small enough such that 
	the function \[\tilde P(t,r):=M_\ve(t,r)-M_{\ve'}(t,r)-\tfrac{\delta}{T-t}\]
	has a positive supremum on $(0,T)\times(0,\infty)$.
	At an interior maximum point, one uses elementary calculus as before, where 
	the main difference is that instead of line~\eqref{eq:296}, we have now an inequality
	\begin{align}
		h_\ve(r^{1-d}\partial_rM_\ve(t,r))-h_{\ve'}(r^{1-d}\partial_rM_{\ve'}(t,r))\le 0.
	\end{align}
	The conclusion is then obtained by conceptually following the proof of Proposition~\ref{prop:cpbound}.
\end{proof}

The bound in Proposition~\ref{prop:cpbound} combined with the conservation of mass allows us to infer a uniform pointwise bound of the family $\{f_\ve\}_\ve$ away from the origin. Let us emphasize that, at this stage,  we do not aim for optimal blow-up rates as $r\downarrow0$. Such sharp rates will be derived in Section~\ref{sec:profile}.
\begin{lemma}[Bound away from origin: isotropic case]\label{l:boundiso}
	Assume the hypotheses of Proposition~\ref{prop:cpbound} and let $\kcp$ be as in~\eqref{eq:kcp}.
	Then for all $\ve\in(0,\epsilon_*]$, all $t>0$ and all $r>0$
\begin{align}\label{eq:apri>0}
	g_\ve(t,r)\le 2\max\{K_*,d\tilde m\}r^{-d},
\end{align}
	where as before we let $g_\ve(t,|v|):=f_\ve(t,v)$ for $f_\ve(t,\cdot)$ isotropic.
\end{lemma}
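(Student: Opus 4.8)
The plan is to extract the pointwise bound on $g_\ve(t,r)$ directly from the uniform temporal Lipschitz estimate \eqref{eq:apriori} for the partial mass function $M_\ve$. The key observation is that $M_\ve$ controls a local average of $g_\ve$: for any $r>0$ and any $\rho\in(0,r)$ one has, using that $g_\ve(t,\cdot)$ is nonnegative,
\begin{align}\label{eq:boundiso.avg}
	M_\ve(t,r)-M_\ve(t,\rho)=\int_\rho^r g_\ve(t,\sigma)\sigma^{d-1}\,\dd\sigma.
\end{align}
So if $g_\ve(t,\cdot)$ were too large on an interval near $r$, the left-hand side would be large; but the left-hand side is controlled because $M_\ve(t,r)\le M_\ve(0,r)+K_*t\le M_\ve(0,r)+K_*t$ is not itself small. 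The right idea, rather, is to exploit monotonicity in time of a comparison function: from \eqref{eq:Mreg} one checks that $\overline M(t,r):=\max\{K_*,d\tilde m\}\,r^d$ (or a similar power) together with the temporal Lipschitz bound yields a useful one-sided estimate. Concretely, the plan is to first establish that
\begin{align}\label{eq:boundiso.key}
	M_\ve(t,r)-M_\ve(t,\rho)\le \text{(something}\lesssim r^d\text{)},
\end{align}
and then deduce the pointwise bound on $g_\ve$ from \eqref{eq:boundiso.avg} by a mean-value / monotonicity argument in the radial variable.

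More precisely, here is the route I expect to work. First, I note that $M_\ve(t,r)\le M(t,r)$ on $[0,\tau_*]$ and that at any time, $0\le M_\ve(t,r)\le c_d^{-1}m=\tilde m$. Using the temporal Lipschitz bound \eqref{eq:apriori}, $M_\ve(t,r)\le M_\ve(s,r)+K_*(t-s)$ for $t\ge s\ge0$; but one should instead run this for small times: $M_\ve(t,r)\le M_\ve(0,r)+K_*t=M(0,r)+K_*t$. Since $M(0,r)=c_d^{-1}\int_{B_r}\fin$ and $\fin\in L^\infty$, $M(0,r)\lesssim r^d$. That gives $M_\ve(t,r)\lesssim r^d+K_*t$, which is good only for small $t$. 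For large $t$ one instead combines with the bound $M_\ve(t,r)\le\tilde m$. The cleanest way to get a time-uniform bound of order $r^d$ is via the \emph{backward} Lipschitz estimate: I would consider a reference time $s$ with $0<s\le t$ chosen optimally. Actually the proof surely proceeds by noting that $M_\ve(t,r)-M_\ve(t,r/2)\le 2K_*\cdot\frac{?}{}$... no: the Lipschitz bound is in \emph{time}, not space. The correct mechanism is: fix $t>0$; by the temporal Lipschitz bound applied between time $t-\tau$ and $t$ (with $\tau\le t$, and using $M_\ve\equiv M$ up to $\tau_*$ if $t\le\tau_*$), one has $M_\ve(t,r)\le M_\ve(t-\tau,r)+K_*\tau$; meanwhile the regular solution $g_\ve$ is smooth for $r>0$ and positive, and one integrates the radial equation. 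I would then split into the case $r$ small versus $r$ comparable to $1$ and the case $t$ small versus $t$ large, using $M(0,\cdot)\lesssim r^d$, $M_\ve\le\tilde m$, and \eqref{eq:apriori}, to conclude in all cases
\begin{align}
	M_\ve(t,r)\le \max\{K_*,d\tilde m\}\,\min\{r^d,\tfrac{1}{d}\}\cdot\text{const},
\end{align}
or something of this shape scaled so the final constant comes out as claimed.

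Given a bound $M_\ve(t,r)\le C_* r^d$ for all $r\le r_0$ (some fixed $r_0$, with $C_*=\max\{K_*,d\tilde m\}$ up to a harmless factor) and $M_\ve(t,r)\le\tilde m$ for all $r$, the pointwise bound on $g_\ve$ follows by a standard monotonicity-in-$r$ trick. If $g_\ve(t,\cdot)$ were, pointwise at some $r$, larger than $2C_* r^{-d}$, then I would like to say $g_\ve(t,\cdot)$ stays large on a whole interval $[r/2,r]$ (e.g.\ if $r\mapsto g_\ve(t,r)$ were monotone decreasing, which is not available in general here) and derive $M_\ve(t,r)-M_\ve(t,r/2)>C_* r^d$, contradicting the bound. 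Since monotonicity of $g_\ve$ in $r$ is \emph{not} known, the actual argument must instead compare with the ODE/PDE structure: from \eqref{eq:114}, $r^{d-1}\partial_r g_\ve+r^d h_\ve(g_\ve)=\partial_t M_\ve$, and $|\partial_t M_\ve|\le K_*$ by \eqref{eq:apriori}, so $r^{d-1}\partial_r g_\ve\ge -K_*-r^d h_\ve(g_\ve)\ge -K_*-r^d g_\ve(1+g_\ve^\gamma)$ — this gives a differential inequality from which a pointwise upper bound on $g_\ve$ of the form $C r^{-d}$ can be bootstrapped, starting from the integral (averaged) bound $M_\ve(t,r)\lesssim r^d$.

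\textbf{Main obstacle.} The delicate point is the passage from the \emph{averaged} information $M_\ve(t,r)\lesssim r^d$ (uniformly in $t,\ve$) to the \emph{pointwise} bound $g_\ve(t,r)\lesssim r^{-d}$, because $g_\ve(t,\cdot)$ need not be radially monotone. One must use the radial flux identity \eqref{eq:120} together with the temporal Lipschitz bound to convert the differential relation $r^{d-1}\partial_r g_\ve = \partial_t M_\ve - r^d h_\ve(g_\ve)$ into a usable a priori pointwise estimate; I expect this to require a short contradiction argument (if $g_\ve(t,r_1)$ is huge, then $\partial_r g_\ve<0$ is very negative just to the right of $r_1$, forcing $g_\ve$ to remain comparably huge on a definite interval to the left of $r_1$, which then makes the integral $\int g_\ve \sigma^{d-1}\,\dd\sigma$ exceed the allowed $O(r^d)$). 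Getting the \emph{explicit} constant $2\max\{K_*,d\tilde m\}$ — rather than merely $Cr^{-d}$ — will require being somewhat careful about which of the two terms ($K_*$ from the time-Lipschitz bound, or $d\tilde m$ from the total mass) dominates in which regime of $r$, and choosing the split radius accordingly; this bookkeeping, though not deep, is where the stated form of the constant is pinned down.
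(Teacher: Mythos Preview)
Your proposal has the right raw ingredients but circles around the clean argument the paper uses, and in two places takes a wrong turn.

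First, the attempt to establish a uniform-in-time bound $M_\ve(t,r)\lesssim r^d$ is a detour that, as you yourself notice, does not close (the temporal Lipschitz bound gives $M_\ve(t,r)\le M_\ve(0,r)+K_*t$, which is useless for large $t$). The paper never proves such an estimate; it uses only the trivial mass bound $M_\ve(t,r)\le\tilde m$.

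Second, the differential inequality you write, $r^{d-1}\partial_r g_\ve\ge -K_*-r^d g_\ve(1+g_\ve^\gamma)$, goes in the wrong direction: the superlinear term $g_\ve^{\gamma+1}$ on the right makes any Gr\"onwall-type bootstrap blow up rather than close. The useful observation is the \emph{opposite} one: from \eqref{eq:120} and $h_\ve(s)\ge s$ one gets
\[
r^d g_\ve\le r^d h_\ve(g_\ve)=\partial_t M_\ve-r^{d-1}\partial_r g_\ve\le K_*-r^{d-1}\partial_r g_\ve,
\]
which \emph{immediately} yields $g_\ve(t,r)\le K_*r^{-d}$ whenever $\partial_r g_\ve(t,r)\ge 0$. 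This sign split is the organising idea you are missing.

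For the remaining case $\partial_r g_\ve(t,r)<0$, the paper looks \emph{left} on the interval $[2^{-1/d}r,r]$ (your sketch has the directions muddled): either $\partial_r g_\ve(t,\cdot)\le 0$ on the whole interval, in which case $g_\ve(t,\rho)\ge g_\ve(t,r)$ there and the mass bound gives $g_\ve(t,r)\cdot\tfrac{r^d}{2d}\le\int_{2^{-1/d}r}^r g_\ve(t,\rho)\rho^{d-1}\,\dd\rho\le\tilde m$; or there is a largest $r_0\in[2^{-1/d}r,r)$ with $\partial_r g_\ve(t,r_0)=0$, so the first case applied at $r_0$ gives $g_\ve(t,r)\le g_\ve(t,r_0)\le K_*r_0^{-d}\le 2K_*r^{-d}$. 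Combining the three subcases produces exactly $g_\ve(t,r)\le 2\max\{K_*,d\tilde m\}r^{-d}$.
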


\begin{proof}
The inequality $s\le h_\ve(s)$ and~\eqref{eq:120} imply the
bound $g_\ve(t,r)\le r^{-d}K_ *-r^{-1}\partial_rg_\ve$. Hence,
\begin{align}
	g_\ve(t,r)\le K_*r^{-d} \text{ whenever }\partial_rg_\ve(t,r)\ge0.
\end{align} 
Suppose now that $\partial_rg_\ve(t,r)<0$ for some $t,r>0$.
If $\partial_rg_\ve(t,\cdot)\le 0$ on $[2^{-\frac{1}{d}}r,r]$, then
$$g_\ve(t,r)\frac{r^d}{2d} =g_\ve(t,r) \int_{2^{-\frac{1}{d}}r}^r\rho^{d-1}\,\dd\rho\le \int_{2^{-\frac{1}{d}}r}^rg_\ve(t,\rho)\rho^{d-1}\,\dd\rho\le\tilde m,$$ 
where the second step uses the monotonicity of $g_\ve(t,\cdot)$ on $[2^{-\frac{1}{d}}r,r]$ and the third step follows from mass conservation. 
Otherwise, there exists $r_0\in[2^{-\frac{1}{d}}r,r]$ such that $\partial_rg_\ve(t,\rho)<0$ for all $\rho\in(r_0,r]$ and $\partial_rg_\ve(t,r_0)=0$.
In this case, we estimate
$$g_\ve(t,r)\le g_\ve(t,r_0)\le K_*r_0^{-d}\le 2K_*r^{-d}.$$
In combination, this shows the bound
$g_\ve(t,r)\le 2\max\{K_*,d\tilde m\}r^{-d}$ for all $r>0$ and every $t>0$.		
\end{proof}

\subsubsection{Anisotropic case}\label{sssec:ani.ex}
For non-isotropic initial data $\fin$ satisfying~\ref{eq:hpinit} and thus in particular 
 $\fin\in L^\infty_\ell(\mathbb{R}^d)$ for some $\ell>3d+1$, we consider as in~\cite{CCLR_2016} an isotropic 
envelope $\hat{f}_\mathrm{in}(v)\ge \fin(v)$ given by
\begin{align}
	\hat{f}_\mathrm{in}(v)=\frac{\|\fin\|_{L^\infty_\ell}}{(1+|v|^{\ell})}.
\end{align}
Since $\ell-(2d+1)>d$, the isotropic function $\hat{f}_\mathrm{in}$ satisfies~\ref{eq:hpinit}  and thus in particular the hypotheses of Proposition~\ref{prop:ex.reg}. 
Invoking this proposition, we obtain non-negative global-in-time (mild) solutions $f_\ve$ and $\hat f_\ve$ of~\eqref{eq:fpnreg} emanating from $\fin$ resp.\ $\hat{f}_\mathrm{in}$, where by the comparison property,
 Proposition~\ref{prop:ex.reg}~\ref{it:cp}, $f_\ve\le\hat f_\ve$ in $[0,\infty)\times \mathbb{R}^d$.
Thus, the uniform bound away from zero in the isotropic case (cf.\ Lemma~\ref{l:boundiso}) implies a similar result for anisotropic solutions:
\begin{corollary}[Bound away from origin: anisotropic case]\label{cor:boundaniso}
	Assume~\ref{eq:hpinit}, thus in particular $\fin\in L^\infty_\ell(\mathbb{R}^d)$ for some $\ell>3d+1$ if $\fin$ is non-isotropic.
There exists a finite (non-explicit) constant $\hat \kcp$ only depending on $\|\fin\|_{L^\infty_\ell}$, $\ell$ and fixed parameters such that for all $t>0$ and all $v\in\mathbb{R}^d\setminus\{0\}$
	\begin{align}\label{eq:bdAni}
		\qquad 	f_\ve(t,v)\le 2\max\big\{ \hat \kcp, d\hat m   \big\}|v|^{-d},
	\end{align}
where $\hat m=c_d^{-1}\|\hat\fin\|_{L^1}$.
\end{corollary}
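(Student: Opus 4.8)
The plan is to deduce the anisotropic estimate directly from the isotropic bound of Lemma~\ref{l:boundiso}, using the comparison structure of~\eqref{eq:fpnreg} to squeeze $f_\ve$ between $0$ and the solution issued from the radial envelope $\hat f_{\mathrm{in}}(v)=\|\fin\|_{L^\infty_\ell}(1+|v|^\ell)^{-1}\ge\fin(v)$. First I would check that $\hat f_{\mathrm{in}}$ is an admissible initial datum in the sense of~\ref{eq:hpinit}: since $\ell\ge d$ one has $\hat f_{\mathrm{in}}\in L^\infty_d(\mathbb{R}^d)$, and since $\ell-(2d+1)>d$ --- which is precisely where the hypothesis $\ell>3d+1$ from~\ref{it:HP.init} enters --- the integral $\int_\dom(1+|v|^{2d+1})\hat f_{\mathrm{in}}(v)\,\dd v$ converges, so $\hat f_{\mathrm{in}}\in L^1_{2d+1}(\mathbb{R}^d)$. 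Being smooth and isotropic, $\hat f_{\mathrm{in}}$ generates a local regular solution enjoying the regularity demanded in~\ref{eq:hpinit} after a harmless time shift, exactly as explained in Section~\ref{sssec:prelim.ub}; in particular $\hat f_{\mathrm{in}}$ satisfies the hypotheses of Propositions~\ref{prop:ex.reg} and~\ref{prop:cpbound}.

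Next I would invoke Proposition~\ref{prop:ex.reg} to obtain the global mild solutions $f_\ve$ and $\hat f_\ve$ of~\eqref{eq:fpnreg} emanating from $\fin$ and $\hat f_{\mathrm{in}}$ respectively, and then combine the comparison property (Lemma~\ref{l:locex.mild}~\ref{it:cp}, which holds verbatim for~\eqref{eq:fpnreg}) with positivity (Lemma~\ref{l:locex.mild}~\ref{it:pos}) to get $0\le f_\ve\le\hat f_\ve$ on $[0,\infty)\times\mathbb{R}^d$ for every $\ve\in(0,\epsilon_*]$. Writing $\hat g_\ve(t,|v|):=\hat f_\ve(t,v)$ for the isotropic solution $\hat f_\ve$ and applying Lemma~\ref{l:boundiso} to it yields $\hat g_\ve(t,r)\le 2\max\{\hat\kcp,d\hat m\}\,r^{-d}$ for all $t,r>0$, where $\hat\kcp$ is the constant~\eqref{eq:kcp} formed with $\hat f_{\mathrm{in}}$ in place of $\fin$ and $\hat m=c_d^{-1}\|\hat f_{\mathrm{in}}\|_{L^1}$. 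Composing the two inequalities gives, for all $t>0$ and $v\in\mathbb{R}^d\setminus\{0\}$,
\[
 f_\ve(t,v)\le\hat f_\ve(t,v)=\hat g_\ve(t,|v|)\le 2\max\{\hat\kcp,d\hat m\}\,|v|^{-d},
\]
which is the claimed bound~\eqref{eq:bdAni}.

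Finally I would record the dependence of the constant: $\hat m=c_d^{-1}\|\hat f_{\mathrm{in}}\|_{L^1}$ is an explicit function of $\|\fin\|_{L^\infty_\ell}$ and $\ell$, and $\hat\kcp=\max\{K,\hat m/\tau_*\}$ with $K$, $\tau_*$ the quantities in~\ref{eq:hpinit} and~\eqref{eq:205} attached to $\hat f_{\mathrm{in}}$; hence $\hat\kcp$ depends only on $\|\fin\|_{L^\infty_\ell}$, $\ell$ and fixed parameters, the dependence being non-explicit precisely because $K$ and $\tau_*$ come from the qualitative local existence theory of Lemma~\ref{l:locex.mild} applied to $\hat f_{\mathrm{in}}$. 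No genuine obstacle arises in this argument; the only point requiring care is the admissibility check of the radial envelope --- in particular its $L^1_{2d+1}$ moment bound --- which is the sole reason the stronger decay exponent $\ell>3d+1$ is imposed in the anisotropic part of~\ref{it:HP.init}.
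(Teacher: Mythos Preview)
Your proposal is correct and follows essentially the same approach as the paper: define the isotropic envelope $\hat f_{\mathrm{in}}(v)=\|\fin\|_{L^\infty_\ell}(1+|v|^\ell)^{-1}$, verify its admissibility via $\ell-(2d+1)>d$, apply the comparison principle to obtain $f_\ve\le\hat f_\ve$, and transfer the isotropic bound from Lemma~\ref{l:boundiso}. Your write-up is in fact slightly more detailed than the paper's, which gives the argument in the short paragraph immediately preceding the corollary.
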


\subsection{Passage to the limit}\label{ssec:passlim}

\begin{proof}[Proof of Proposition~\ref{prop:lim}]
	For $\ve\in(0,\epsilon_*]$ let $f_\ve$ be the global-in-time mild solution of~\eqref{eq:fpnreg}
	emanating from $\fin$ as constructed in Proposition~\ref{prop:ex.reg}.
	In the rest of this proof we abbreviate $\RdnoO:=\mathbb{R}^d\setminus\{0\}$.

We first show assertions~\ref{it:curve}--\ref{it:approxProp}.
	
		\paragraph{- \em{Approximation property~\ref{it:approxProp} and regularity of $f$}}\label{page:class}
We	assert that for every compact set $G\subset\subset(0,\infty)\times \RdnoO$, we have an $\ve$-uniform bound of the form
	\begin{align}\label{eq:597}
		\|f_\ve\|_{H^{1+\frac\alpha2,2+\alpha}(G)}\le C_G 
	\end{align}
for some $\alpha\in(0,1)$, where $H^{1+\frac\alpha2,2+\alpha}(G)$ denotes the parabolic H\"older space with $\tfrac{\alpha}{2}$-H\"older continuous first order temporal and $\alpha$-H\"older continuous second order spatial derivatives. 
	Inequality~\eqref{eq:597} can be shown using standard results on parabolic 
	regularity~\cite{LSU_1968,Lieberman_1996}. 
	To sketch the main points, we first observe that each $f_\ve$
	is strictly positive unless $\fin\equiv0$ (cf.~\cite[Prop.~52.7]{QS_2019}) and smooth in $(0,\infty)\times\mathbb{R}^d$. Moreover, as a consequence of Lemma~\ref{l:boundiso} resp.\ Corollary~\ref{cor:boundaniso}, the family 
	$\{f_\ve\}_\ve$ is $\ve$-uniformly bounded in $L^\infty(G)$.
	Hence, rewriting~\eqref{eq:fpnreg} as
	$\partial_tf_\ve=\Delta f_\ve+h_\ve'(f_\ve)v\cdot\nabla f_\ve+dh_\ve(f_\ve),$
	Theorem~11.1 in~\cite[Chapter~III]{LSU_1968} on linear parabolic equations provides us with an $\ve$-uniform gradient bound $\|\nabla f_\ve\|_{C^0(G)}\le C_G$. For higher-order spatial derivatives, $\ve$-uniform bounds on $G$ are obtained by applying a similar reasoning to the equation satisfied by $\partial_{v_i}f_\ve$ etc., and time regularity follows from the equation itself. 
	
	Hence, by the Arzel\`a--Ascoli theorem,
	 there exists a function $f\in C^{1,2}((0,\infty)\times U)$, $f\ge0$, such that,
	upon passing to a subsequence $\ve\downarrow0$ (not relabelled), 
	\begin{align}\label{eq:295}
	\hspace{2cm}	f_\ve\to f\quad \text{ in  }\;C^{1,2}(G)\;
		\text{ for every }G\subset\subset(0,\infty)\times U,
	\end{align}
and $f$ is a classical solution of~\eqref{eq:fpn} in $(0,\infty)\times U$.

Combining~\eqref{eq:295} with the  moment bound in Lemma~\ref{l:moments} yields, for all $\rho>0$, 
\begin{align}\label{eq:295.}
\lim_{\ve\to0}\|f_\ve(t)-f(t)\|_{L^1(\mathbb{R}^d\setminus B_\rho(0))}=0
\end{align}
 locally uniformly in $t\in[0,\infty)$. 
 Moreover, Fatou's lemma implies $\int_{\mathbb{R}^d}f(t)\le m$ for all $t$.
 
 Let us now show that $f$ is strictly positive for non-trivial initial data $\fin$, i.e.\ whenever $m>0$.
 For this purpose, we pick some $\theta>0$, define $\fin^\#:=\min\{f_{\infty,\theta},\fin\}$, and let $\{f_\ve^\#\}_{\ve\in(0,\epsilon_\theta]}$ with $\epsilon_\theta:=(\|f_{\infty,\theta}\|_{L^\infty(\mathbb{R}^d)})^{-1}>0$ denote the family of global-in-time mild solutions of \eqref{eq:fpnreg} starting from 
 $\fin^\#$. For $\ve\in(0,\epsilon_\theta]$ the steady state $f_{\infty,\theta}$ is a classical solution of \eqref{eq:fpnreg} with rapid decay as $|v|\to\infty$, and thus in particular a mild solution. Hence, the comparison principle in Lemma~\ref{l:locex.mild}~\ref{it:cp} implies that 
 \begin{align}\label{eq:cp.pos}
 	f_\ve^\#\le \min\{f_{\infty,\theta},f_\ve\}
 \end{align}
 showing in particular that $f^\#:=f_\ve^\#$ is independent of $\ve$ for $\ve\in(0,\epsilon_\theta]$ (cf.\ the argument in Cor.~\ref{cor:stc}). By Lemma~\ref{l:locex.mild}~\ref{it:Cinfty}, $f^\#$ is a non-negative classical solution of \eqref{eq:fpnreg} (and \eqref{eq:fpn}) with initial datum $\fin^\#\not\equiv 0$. From a classical strong comparison principle (see e.g.\ \cite[Prop.~52.7]{QS_2019}), comparing $f^\#$ with the zero solution, we deduce that $f^\#$ is strictly positive in $(0,\infty)\times\mathbb{R}^d$. 
 Taking the limit $\ve\to0$ in~\eqref{eq:cp.pos} along the subsequence obtained in~\eqref{eq:295}
 yields $f^\#\le f$, and thus provides us with a locally uniform positive lower bound for $f$ away from zero. 

The family of measures $\{\mu^{(\ve)}\}_\ve$, $\mu^{(\ve)}:=f_\ve\tvmeas_+$, is tight on any finite time horizon as ensured by Lemma~\ref{l:moments}. Hence, by Prokhorov's theorem, there exists a non-negative Radon measure $\mu$ on $[0,\infty)\times\mathbb{R}^d$ 
such that after possibly passing to another subsequence $\ve\downarrow0$
\begin{align}\label{eq:294}
	\mu^{(\ve)}\overset{\ast}{\rightharpoonup} \mu\text{ in }\mathcal{M}_+([0,T]\times\mathbb{R}^d) 
\end{align}
for any $T<\infty$. In fact, due to~\eqref{eq:295},~\eqref{eq:295.} and 
  $\int_{\mathbb{R}^d} f_\ve(t)\equiv m$, 
the passage to a subsequence $\ve\downarrow0$ would not have been necessary at this point (see also the next paragraph).

\paragraph{- \em{Mass-conserving curve and decomposition}}
By~\eqref{eq:295.} the family $\mu^{(\ve)}_t:=f_\ve(t)\mathcal{L}^d$ satisfies
\begin{align}\label{eq:mut}
	\lim_{\ve\to0}\int_{\mathbb{R}^d}\psi\,\dd\mu^{(\ve)}_t=\int_{\mathbb{R}^d}\psi(v)\,f(t,v)\dd v
\end{align}
 for all $\psi\in C_b(\mathbb{R}^d)$ with $\supp\psi\subset\mathbb{R}^d\setminus\{0\}$, where the limit is taken along the same sequence $\ve\downarrow0$ as in~\eqref{eq:295.}.
At the same time, the tightness of the family $\{\mu^{(\ve)}_t\}_{\ve}$ ensures, upon passing to a subsequence $\ve_j\downarrow0$ which may (initially) depend on $t$, that 
$\mu^{(\ve_j)}_t\weakstar \mu_t$ in $\mathcal{M}_+(\mathbb{R}^d)$ for some $\mu_t\in\mathcal{M}_+(\mathbb{R}^d)$ with $\mu_t(\mathbb{R}^d)=m$. As a consequence of~\eqref{eq:mut}, we have 
$\supp(\mu_t-f(t)\mathcal{L}^d)\subset\{0\}$, independent of the chosen subsequence $\ve_j\downarrow0$. 
Since $\mu_t(\mathbb{R}^d)=m$, this entails that
 $\mu_t(\{0\})=m-\|f(t)\|_{L^1(\mathbb{R}^d)}:=a(t)$. Hence, 
\begin{align}\label{eq:295..}
	\mu_t=\ptm(t)\delta_0+f(t)\mathcal{L}^d,\quad t\ge0,
\end{align}
and the convergence
\begin{align}\label{eq:300}
	\mu^{(\ve)}_t\weakstar \mu_t\quad \text{ in }\mathcal{M}_+(\mathbb{R}^d)
\end{align}
holds for the entire sequence $\ve\downarrow0$ as obtained in~\eqref{eq:295}--\eqref{eq:294}. 
Let now $\vp\in C_c([0,\infty)\times\mathbb{R}^d)$.
 On the one hand, identity~\eqref{eq:294} implies that
$\lim_{\ve\downarrow0}\int_{[0,\infty)\times\mathbb{R}^d}\vp\,\dd\mu^{(\ve)}=\int_{[0,\infty)\times\mathbb{R}^d}\vp\,\dd\mu$. On the other hand, the function $\iota_\ve(t):=\int_{\mathbb{R}^d}\vp(t,v)\,\dd\mu_t^{(\ve)}(v)$ admits the uniform bound $|\iota_\ve(t)|\le m\|\vp\|_{L^\infty}$ for all $t\ge0$ and converges pointwise 
to $\int_{\mathbb{R}^d}\vp(t,v)\,\dd\mu_t(v)$ as $\ve\downarrow0$.
Hence, using dominated convergence for the right-hand side (in conjunction with the compact support of $\vp$ in time), we may pass to the limit $\ve\downarrow0$ in the identity $$\int_{[0,\infty)\times\mathbb{R}^d}\vp\,\dd\mu^{(\ve)}=\int_{[0,\infty)}\int_{\mathbb{R}^d}\vp(t,v)\,\dd\mu_t^{(\ve)}(v)\dd t$$
to deduce the representation $\dd\mu=\dd\mu_t\dd t$.

To prove the asserted weak-$\ast$ continuity of the mapping $[0,\infty)\ni t\mapsto\mu_t\in\mathcal{M}_+(\mathbb{R}^d)$, 
let us first recall that $\mathcal{M}_+(\mathbb{R}^d)$ endowed with the weak-$*$ topology is metrizable 
(see e.g.~\cite{AGS_2008,Klenke_2014}), so that it suffices to show sequential continuity:
 given $\hat t\ge0$ and a sequence $(t_j)$ satisfying $\lim_{j\to\infty}t_j=\hat t$, we need to prove that $\mu_{t_j}\weakstar\mu_{\hat t}$ in $\mathcal{M}_+(\mathbb{R}^d)$.
By the Portmanteau theorem (see e.g.~\cite[Theorem 13.16]{Klenke_2014}), it suffices 
to show for every open subset $O\subset\mathbb{R}^d$ the estimate
\begin{align}\label{eq:wstar}
	\int_O\dd\mu_{\hat t}\le\liminf_{j\to\infty}\int_O\dd\mu_{t_j}.
\end{align}
If $0\in O$, then $B_\rho(0)\subset O$ for $\rho>0$ small enough, and~\eqref{eq:wstar} holds
with an equality. Indeed, the smoothness of $f$ away from $v=0$ and the moment bound in Lemma~\ref{l:moments} (which implies an analogous bound for the pointwise limit $f$ of $f_\ve$) ensure that $f(t_j)\to f(\hat t)$ in $L^1(\mathbb{R}^d\setminus O)$, and hence 
\begin{align}
	\int_O\dd\mu_{\hat t} = m-\int_{\mathbb{R}^d\setminus O}f(\hat t) 
	= \lim_{j\to\infty}\bigg(m-\int_{\mathbb{R}^d\setminus O}f(t_j) \bigg) 
	= \lim_{j\to\infty} \int_O\dd\mu_{t_j}.
\end{align}
 If $0\not\in O$,  inequality~\eqref{eq:wstar} is equivalent to 
$\int_Of(\hat t,v)\,\dd v\le\liminf_{j\to\infty}\int_Of(t_j,v)\,\dd v$ (in virtue of~\eqref{eq:295..}), and this bound is a consequence of Fatou's lemma since $f(t_j)\to f(\hat t)$ a.e.~in $\mathbb{R}^d$. This establishes~\eqref{eq:wstar}.

\smallskip
It remains to prove assertions~\ref{it:uniq.sola} and~\ref{it:lip.ptm}. 
\smallskip
	\paragraph{- \em{Unique limit}} We now show~\ref{it:uniq.sola}.
In the isotropic case,  Proposition~\ref{prop:mon.scheme} ensures that the limit $M(t,r):=\lim_{\ve\to0}M_\ve(t,r)=c_d^{-1}\lim_{\ve\to0}\mu_t^{(\ve)}(B_r)$ is well-defined for all $t,r>0$. Thus, in this case, the limiting density $f$ in~\eqref{eq:295} and hence $\mu$ can be uniquely recovered from $M$, which is independent of the choice of the sequence $\ve\downarrow0$.
In view of the above compactness properties, this implies assertion~\ref{it:uniq.sola}.
	
	\paragraph{- \em{Lipschitz continuity of point mass}}
	Restricting to isotropic data, we have for $r>0$,
	\begin{align}
		c_dM_\ve(t,r)=\mu^{(\ve)}_t(B_r)&\to\mu_{t}(B_r)&&\text{ as }\ve\to0,
		\\ \mu_t(B_r)&\to a(t)&&\text{ as }r\to0,
	\end{align}
	where the first line follows from~\eqref{eq:300} and the fact that 
	$\supp\mu_t^\mathrm{sing}\subseteq\{0\}$.
	Thus, the Lipschitz bound~\eqref{eq:apriori} implies that $|\ptm(t)-\ptm(s)|\le c_d\kcp|t-s|$, hence part~\ref{it:lip.ptm}.
\end{proof}

\section{Universal space profile}\label{sec:profile}

Equipped with the uniform control~\eqref{eq:apriori}, we will now combine ODE and bootstrap arguments 
with localised semi-group estimates to study the regularity and the space profile of the density $f$ near the origin.  A rigorous analysis is achieved by working with the family of
approximate solutions $f_\ve$ constructed in Section~\ref{ssec:mild}.
We will show that for isotropic data the solution at any fixed positive time is either regular and smooth, or the density of the regular part follows, up to a lower order term with explicit rate, a universal profile at the origin that is uniquely determined by the limiting steady state $f_c$. 
This even slightly improves the profile obtained in~\cite{CHR_2020} for $d=1$.

Throughout this section we assume~\ref{eq:hpinit} and let $\kcp$ denote the least upper bound such that inequality~\eqref{eq:apriori} holds true, that is
		\begin{align}\label{eq:defK}
			\kcp:=\sup_{\ve\in(0,\epsilon_*]}\sup_{t>0,r>0}|\partial_tM_\ve(t,r)|.
		\end{align}
In virtue of Section~\ref{sssec:prelim.ub}, it is clear that the main conclusion of the present section,   Theorem~\ref{thm:profile.r}, only requires hypothesis \ref{it:HP.init} and not its stronger version~\ref{eq:hpinit}.
	
\subsection{Lower and upper bounds}
	
The analysis in this subsection mostly concerns isotropic solutions, for which the uniform bound~\eqref{eq:apriori} is available.
As introduced in Section~\ref{sssec:isotropic}, in the isotropic case we write $g_\ve(t,r):=f_\ve(t,v)$ whenever $r=|v|>0$, and likewise $g(t,r):=f(t,v)$ for the pointwise limit obtained upon sending $\ve\downarrow0$.

		\begin{proposition}[Lower bound]\label{prop:profile.lb}
			Abbreviate $\alpha_c=\frac{2}{\gamma}$.
			In addition to~\ref{it:HPspc}, \ref{eq:hpinit} suppose that 
			\begin{align}
				\alpha_c+2-d>0.
			\end{align}
			Further assume that the initial value $f_\mathrm{in}$ is isotropic.
			Pick any $\ul\alpha\in((d-2)_+,\alpha_c)$. 
			 If $d=1$, assume in addition that\footnote{In dimension $d=1$ condition~\ref{it:HPspc} reduces to $\gamma>2$, which is implies that $\alpha_c>\frac{1}{\gamma-1}$.} $\ul\alpha>\frac{1}{\gamma-1}$.
		For $\alpha\in[\ul\alpha,\alpha_c]$ let $\tilde g(r)=c_\gamma r^{-\alpha}$,
		 where $c_\gamma=\big(2/\gamma\big)^{1/\gamma}$. For $t>0$ and $\ve\in(0,\epsilon_*]$ define
		\begin{align*}
			\tilde r_\ve=\tilde r_\ve(t)=\sup\{r>0: g_\ve(t,\rho)<\tilde g(\rho)\text{ for all }\rho\in(0,r)\}.
		\end{align*}	
		There exists a constant $B<\infty$ and
		a radius $r_*\in(0,1]$ only depending on $\kcp$ (cf.~\eqref{eq:defK}) and on $\gamma,d,\ul\alpha$ (but not on $\alpha$)
		such that for all $t>0$ and all $\ve\in(0,\epsilon_*]$ the following holds: whenever $\tilde r_\ve(t)\in(0, r_*)$, then
		\begin{align*}
			\qquad g_\ve(t,r)\ge \tilde g(r) -Br^{2-d} \qquad \text{ for } \; r\in(\tilde r_\ve(t),r_*).
		\end{align*}
	\end{proposition}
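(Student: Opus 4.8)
The plan is to freeze $t>0$ and $\ve\in(0,\epsilon_*]$, regard the radial profile $G(r):=g_\ve(t,r)$ as the solution of a scalar ordinary differential inequality in $r$, and compare it with an explicit barrier. From the identity $\partial_tM_\ve=r^{d-1}\partial_rg_\ve+r^dh_\ve(g_\ve)$ in~\eqref{eq:120}, the uniform bound $|\partial_tM_\ve|\le\kcp$ (Proposition~\ref{prop:cpbound}, cf.~\eqref{eq:defK}), and the elementary inequality $h_\ve\le h$, one obtains
\[
G'(r)=r^{1-d}\partial_tM_\ve(t,r)-rh_\ve(G(r))\;\ge\;-rh(G(r))-\kcp\,r^{1-d}=:F(r,G(r)),\qquad r>0.
\]
Two further elementary remarks: since $g_\ve(t,\cdot)$ is bounded near the origin while $\tilde g(r)=c_\gamma r^{-\alpha}$ blows up there, $\tilde r_\ve(t)>0$ holds automatically, and by continuity together with the definition of $\tilde r_\ve(t)$ as a supremum one has $G(\tilde r_\ve(t))=\tilde g(\tilde r_\ve(t))$; and $F(r,\cdot)$ is \emph{strictly decreasing} on $[0,\infty)$, because $\partial_uF(r,u)=-r\bigl(1+(1+\gamma)u^\gamma\bigr)<0$.

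The core of the argument is to construct, \emph{uniformly in} $\alpha\in[\ul\alpha,\alpha_c]$, a strict subsolution of the ODE $w'=F(r,w)$ of the prescribed form $\psi_\alpha(r):=\tilde g(r)-Br^{2-d}=c_\gamma r^{-\alpha}-Br^{2-d}$ on $(0,r_*)$, with $B<\infty$ and $r_*\in(0,1]$ depending only on $\kcp,\gamma,d,\ul\alpha$. One must check (a) $\psi_\alpha>0$ on $(0,r_*)$, and (b) $\psi_\alpha'(r)<-r\psi_\alpha(r)\bigl(1+\psi_\alpha(r)^\gamma\bigr)-\kcp r^{1-d}$ on $(0,r_*)$. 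Property~(a) holds because $\alpha\ge\ul\alpha>(d-2)_+\ge d-2$ makes $c_\gamma r^{-\alpha}$ dominate $Br^{2-d}$ as $r\downarrow0$, uniformly in $\alpha$ after estimating $r^{2-d+\alpha}\le r^{2-d+\ul\alpha}$ for $r\le1$. For~(b) one substitutes $\psi_\alpha$ and expands $\psi_\alpha'+r\psi_\alpha+r\psi_\alpha^{1+\gamma}+\kcp r^{1-d}$ in powers of $r$, bounding $\psi_\alpha^{1+\gamma}$ from above via $(1-y)^{1+\gamma}\le1-(1+\gamma)y+Cy^2$ with $y=\tfrac{B}{c_\gamma}r^{2-d+\alpha}$. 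The leading pair $-\alpha c_\gamma r^{-\alpha-1}+c_\gamma^{1+\gamma}r^{1-\alpha(1+\gamma)}=c_\gamma r^{-\alpha-1}\bigl(\alpha_c r^{\,2-\alpha\gamma}-\alpha\bigr)$ carries a negative sign for $r$ small whenever $\alpha<\alpha_c$, and \emph{vanishes identically} when $\alpha=\alpha_c$ — this is exactly where $c_\gamma=(2/\gamma)^{1/\gamma}$ (equivalently $c_\gamma^\gamma=\alpha_c=2/\gamma$, $\alpha_c\gamma=2$) is used. The sign of the remaining terms is then dictated at order $r^{1-d}$, whose coefficient is of the form $\kcp-B\,(\alpha_c+4-d)$, the higher corrections (the $Cy^2$ term, etc.) sitting at order strictly above $r^{1-d}$ precisely because $\alpha_c+2-d>0$; since $\alpha_c+4-d=(\alpha_c+2-d)+2>2$ by~\eqref{eq:regime1}, choosing $B$ large makes this coefficient negative, and then $r_*$ small makes $\psi_\alpha$ a strict subsolution on all of $(0,r_*)$. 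The delicate point is uniformity in $\alpha$: for $\alpha$ in a compact subset of $[\ul\alpha,\alpha_c)$ the lowest-order term $-\alpha c_\gamma r^{-\alpha-1}$ dominates with a gap bounded below, while for $\alpha$ near $\alpha_c$ one must keep the near-cancellation of the leading pair and use that $\alpha_c r^{2-\alpha\gamma}-\alpha<0$ exactly on the range of $r$ where the $r^{1-d}$ terms are not yet sign-definite; bookkeeping these crossovers (in dimension $d=1$ additionally invoking $\ul\alpha>\tfrac1{\gamma-1}$) yields a single admissible pair $(B,r_*)$.

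With the barrier in hand the conclusion is a routine ODE comparison on $[\tilde r_\ve(t),r_*]$. There $G'\ge F(\cdot,G)$, $\psi_\alpha'<F(\cdot,\psi_\alpha)$, $\psi_\alpha>0$ (so $F(r,\cdot)$ is strictly decreasing where relevant), and $\psi_\alpha(\tilde r_\ve(t))=\tilde g(\tilde r_\ve(t))-B\tilde r_\ve(t)^{2-d}<G(\tilde r_\ve(t))$. If $G-\psi_\alpha$ reached $0$ at a first point $\bar r\in(\tilde r_\ve(t),r_*]$ then $(G-\psi_\alpha)'(\bar r)\le0$, while $(G-\psi_\alpha)'(\bar r)\ge F(\bar r,G(\bar r))-\psi_\alpha'(\bar r)=F(\bar r,\psi_\alpha(\bar r))-\psi_\alpha'(\bar r)>0$, a contradiction. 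Hence $g_\ve(t,r)=G(r)\ge\psi_\alpha(r)=\tilde g(r)-Br^{2-d}$ for all $r\in(\tilde r_\ve(t),r_*)$, with all constants manifestly independent of $\ve$ and of $t$. The main obstacle is thus the construction of the subsolution of the very rigid form $c_\gamma r^{-\alpha}-Br^{2-d}$ with $B,r_*$ uniform in $\alpha$ up to the critical exponent: the mechanism that makes it work is the exact cancellation at $\alpha=\alpha_c$ built into $c_\gamma$, together with the fact that the first non-cancelling correction appears at order $r^{1-d}$ with a coefficient whose sign is governed by $\alpha_c+2-d>0$; the rest is exponent bookkeeping, which must be carried out carefully enough to survive the limit $\alpha\uparrow\alpha_c$.
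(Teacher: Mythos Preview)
Your approach is correct and genuinely different from the paper's. Both arguments start from the same first-order ODE inequality $G'\ge F(r,G)$ with $F(r,u)=-ru(1+u^\gamma)-K_*r^{1-d}$ coming from~\eqref{eq:120} and the bound $|\partial_tM_\ve|\le K_*$, but they diverge from there.

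You construct the barrier $\psi_\alpha=\tilde g-Br^{2-d}$ \emph{directly} and verify the strict subsolution inequality by Taylor-expanding $\psi_\alpha^{1+\gamma}$; the crux is the uniform-in-$\alpha$ sign control, which you correctly identify as the delicate step. (This step can indeed be closed: writing $\beta=2-\alpha\gamma$ and $x=\beta|\ln r|$, one checks the elementary bound $\tfrac{\beta r^\beta}{1-r^\beta}=\tfrac{x e^{-x}}{|\ln r|(1-e^{-x})}\le\tfrac{1}{|\ln r|}$, so the ``leading pair'' plus the linear-in-$B$ cross term together satisfy $c_\gamma r^{-\alpha-1}(\alpha_c r^\beta-\alpha)-(1{+}\gamma)\alpha_c B\,r^{1-d+\beta}\le-(1{+}\gamma)\alpha_c B\,r^{1-d}$ uniformly for $r$ small; after this the coefficient at order $r^{1-d}$ is exactly $K_*-B(\alpha_c+4-d)$ for \emph{every} $\alpha$, and the remaining terms are higher order with a uniform gap $\ge\min\{d-\alpha_c,\,2,\,\ul\alpha+2-d\}$.) The paper instead passes to the transformed variable $\Phi'(g_\ve)$, which absorbs the nonlinearity: from $\tfrac{\dd}{\dd r}\Phi'(g_\ve)\ge -r-K_*r^{1-d}g_\ve^{-(\gamma+1)}$ one integrates from $\tilde r_\ve$ to $r$ and runs a continuity/bootstrap argument (assume $g_\ve\ge\tfrac12\tilde g$ on an interval, use this to bound the integral, deduce $g_\ve\ge\tfrac34\tilde g$, hence the interval extends to $r_*$). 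The uniformity in $\alpha$ is handled in the paper not by barrier calculus but by the elementary inequality $r^2-\rho^2\le r^{\alpha\gamma}-\rho^{\alpha\gamma}$ for small $r$, which replaces the $-\tfrac12 r^2$ coming from the integration by $-\tfrac12 r^{\alpha\gamma}$.

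What each buys: the paper's $\Phi'$-route avoids expanding $(1-y)^{1+\gamma}$ and the attendant crossover analysis, at the price of a bootstrap and the auxiliary inequality above (this is also where the extra $d=1$ hypothesis $\ul\alpha>\tfrac{1}{\gamma-1}$ enters, to ensure $2-d+\alpha<\alpha\gamma$). Your route is more elementary and makes the role of the specific remainder order $r^{2-d}$ transparent, but shifts all the work into the uniform subsolution verification; once that bookkeeping is written out (as sketched above), the comparison step is a one-line first-touching argument.
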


\begin{remark}\label{rem:lb}
Let us note that for $\alpha=\alpha_c$ it is (a priori) not clear whether
the unboundedness of the limiting function $f$ at some time $t$, i.e.\ $\|f(t)\|_{L^\infty(\mathbb{R}^d)}=\infty$, implies that $\liminf_{\ve\downarrow0}\tilde r_\ve(t)=0$. This is the main reason why the derivation of the universal lower bound 
 on the spatial singularity profile in Theorem~\ref{thm:profile.r} requires several further steps (cf.\ Sec.~\ref{ssec:inst.reg} and Sec.~\ref{ssec:profile.proof}). 
 For ruling out fine spike-like singularities in $f(t)$ near the origin that are dominated by a subcritical power law, i.e.\ by $Cr^{-\alpha}$ for some $\alpha<\alpha_c$, 
 we exploit the fact that such subcritical singularities are smoothed out instantaneously 
 (cf.\ Prop.~\ref{prop:ins.subcr}) and so cannot form at a positive time. 
 For dealing with potential intermediate situations (e.g.\ oscillatory power laws), it is crucial that the stability result in Proposition~\ref{prop:profile.lb} is valid not only for $\alpha=\alpha_c$ but also for a small range of subcritical exponents $\alpha$ near $\alpha_c$, see Section~\ref{ssec:profile.proof} for details.
\end{remark}

	\begin{proof}[Proof of Proposition~\ref{prop:profile.lb}]
			To begin with, we note that for any $\alpha\in[\ul\alpha,\alpha_c]$
		\begin{align}\label{eq:121}
			-\alpha\le-\ul\alpha<2-d\le4-d-\alpha\gamma.
		\end{align}		
				Let now $t>0$ and  $\ve\in(0,\epsilon_*]$.  Observe that the radius $\tilde r_\ve(t)$ may be infinite and that the  assertion of Proposition~\ref{prop:profile.lb} only concerns the case where $\tilde r_\ve(t)>0$ is small. (If $\tilde r_\ve(t)\ge1$ for all $\ve\in(0,\epsilon_*]$ and $t>0$,  the assertion is trivially satisfied for $r_*=1$.) Hence, in the following we may assume that $\tilde r_\ve(t)\in(0,1)$.
Then, by continuity, $g_\ve(t,\tilde r_\ve(t))=\tilde g(\tilde r_\ve(t))$, and we may define a radius $\tilde r_{1,\ve}>\tilde r_\ve$ via
\begin{align*}
	\tilde r_{1,\ve}(t):=\sup\{r\in(\tilde r_\ve(t),1):g_\ve(t,\rho)\ge \tfrac{1}{2}\tilde g(\rho)\text{ for all }\rho\in(\tilde r_\ve(t),r)\}.
\end{align*}

To proceed, we abbreviate $b_\ve(t,r):=\partial_tM_\ve(t,r)$ and note that (cf.~\eqref{eq:120})
	\begin{align*}
		r^{d-1}\partial_rg_\ve+r^dh_\ve(g_\ve)=b_\ve.
	\end{align*}
In the rest of the proof we are concerned with suitably estimating an integrated version
of this differential equation.
The following calculations being of a purely spatial type, we henceforth omit the fixed time argument $t$.
Recall that $\Phi'$ is a primitive of $\frac{1}{h}$, i.e.\ $\Phi''=\frac{1}{h}$ (cf.\ page~\pageref{eq:gradflow}), whence
\begin{align}
	\frac{\dd}{\dd r}\Phi'(g_\ve) =\frac{\partial_rg_\ve}{h(g_\ve)} 
	&= -r\frac{h_\ve(g_\ve)}{h(g_\ve)} + b_\ve r^{1-d}\frac{1}{h(g_\ve)}
	\ge -r -  K_* r^{1-d}g_\ve^{-(\gamma+1)}, 
\end{align}
where we used the fact that $\frac{h_\ve(s)}{h(s)}\le 1$ and $h(s)\ge s^{\gamma+1}$ for all $s>0$.
Renaming $r$ by $\rho$ and integrating the above inequality in space over $\rho\in(\tilde r_\ve,r)$ for $r\in(\tilde r_\ve,1]$ yields
\begin{align}\label{eq:444}
	\Phi'(g_\ve(r))-\Phi'(g_\ve(\tilde r_\ve))\ge -\frac{1}{2}r^2+\frac{1}{2}\tilde r_\ve^2
	-K_*\int_{\tilde r_\ve}^r\rho^{1-d}g_\ve(\rho)^{-(\gamma+1)}\,\dd\rho.
\end{align}
We next expand for $s\gg1$
\begin{align}\label{eq:Phi'}
	\Phi'(s)=-\frac{1}{\gamma}\log(s^{-\gamma}+1)
	=-\frac{1}{\gamma}s^{-\gamma}+O(s^{-2\gamma})
	=-\frac{1}{\gamma}s^{-\gamma}\big(1+O(s^{-\gamma})\big).
\end{align}
Note that the increasing function $\Phi':(0,\infty)\to(-\infty,0)$ is bijective and
 for $-1\ll\hat s<0$ 
\begin{align}\label{eq:Phi'.inv}
	(\Phi')^{-1}(\hat s) = \big(\exp(-\gamma\hat s)-1\big)^{-\frac{1}{\gamma}}= (-\gamma\hat s)^{-\frac{1}{\gamma}}\big(1+O(\hat s)\big).
\end{align}
Furthermore, we assert that there exists $r_\circ=r_\circ(\ul\alpha,\gamma)\in(0,\mathrm{e}^{-1}]$ such that 
\begin{align}\label{eq:446}
	r^2-\rho^2\le r^{\alpha\gamma}-\rho^{\alpha\gamma}  \qquad \text{for all }\; 0<\rho<r\le r_\circ.
\end{align}
 Inequality~\eqref{eq:446} can be shown as follows: since
$\beta:=\frac{\alpha\gamma}{2}\le 1$, concavity yields
\begin{align}\label{eq:447}
	\beta r^{\beta-1}(r-\rho) \le r^\beta-\rho^\beta \qquad \text{ for all}\;0<\rho<r<1.
\end{align}
Upon multiplication by $r+\rho\le r^\beta+\rho^\beta$, we deduce 
	$\beta r^{\beta-1}(r^2-\rho^2) \le r^{2\beta}-\rho^{2\beta}$. 
This implies~\eqref{eq:446}, since 
	 $\beta r^{\beta-1}\ge 1$ for all $\beta\in[\frac{\ul\alpha\gamma}{2},1]$ and $r\in(0,r_\circ]$ if  $r_\circ>0$ is small enough as above.
(To see the latter, note that $\beta r^{\beta-1}\ge 1$ is equivalent to $r\le \beta^{\frac{1}{1-\beta}}$, where $\beta^{\frac{1}{1-\beta}}\uparrow\mathrm{e}^{-1}$ as $\beta\uparrow1$.)

Letting $\rho=\tilde r_\ve$ in~\eqref{eq:446} we infer from~\eqref{eq:444}, using also~\eqref{eq:Phi'} and the identity
$g_\ve(\tilde r_\ve)=c_\gamma \tilde r_\ve^{-\alpha}$, 
\begin{align}
	\Phi'(g_\ve(r))&\ge 
	-\frac{1}{2}r^{\alpha\gamma}+O(r^{2\alpha\gamma}) -K_*\int_{\tilde r_\ve}^r\rho^{1-d}g_\ve(\rho)^{-(\gamma+1)}\,\dd\rho
\end{align}
whenever $\tilde r_\ve<r<r_\circ$.
For $\rho\in (\tilde r_\ve,\tilde r_{1,\ve})$ we have $g_\ve(\rho)^{-(\gamma+1)}\le 2^{\gamma+1}\tilde g(\rho)^{-(\gamma+1)}=: C_1(\gamma)\rho^{\alpha\gamma+\alpha}$. 
We will now show that there exists $r_*\in(0,r_\circ]$ only depending on $K_*$, $\underline\alpha$ and fixed parameters such that $\tilde r_{1,\ve}(t)\ge r_*$ for all $t>0$ and $\ve\in(0,\epsilon_*]$ for which $\tilde r_\ve(t)\in(0,1)$.
To this end, we let $\tilde r_{2,\ve}:=\min\{\tilde r_{1,\ve},r_*\}$ for some $r_*\in(0,r_\circ]$ to be fixed later.
For $r\in (\tilde r_\ve,\tilde r_{2,\ve}]$ we have 
\begin{align}
	\Phi'(g_\ve(r))&\ge 
	-\frac{1}{2}r^{\alpha\gamma}\big(1+K_*C_1(\gamma)r^{2-d+\alpha}+O(r^{\alpha\gamma}) \big).
\end{align}
The last two terms in the brackets on the right-hand side behave like $O(r^{2-d+\alpha})$ for $0<r\ll1$, because $2-d+\alpha< \alpha\gamma$ for all $\alpha\in[\ul\alpha,\alpha_c]$ (if $d=1$ it follows from the choice $\ul\alpha>\frac{1}{\gamma-1}$; if $d\ge2$ this follows from the condition $\gamma>\frac{2}{d}$ in~\ref{it:HPspc}, which implies that 
$\alpha(\gamma-1)>\alpha(\frac{2}{d}-1)=\frac{\alpha}{d}(2-d)\ge(2-d)$ since $2-d\le0$ and $\alpha\le\frac{2}{\gamma}<d$), 
where the hidden constants in $O(\cdot)$ only depend on $K_*$ and fixed parameters. 
Hence, using the fact that $\Phi'$ is increasing and recalling the expansion~\eqref{eq:Phi'.inv}, we infer for  $r\in(\tilde r_\ve,\tilde r_{2,\ve}]$ 
\begin{align}\label{eq:448}
	\begin{aligned}
g_\ve(r)&\ge 
	(\Phi')^{-1}(-\tfrac{1}{2}r^{\alpha\gamma}\big(1 +O(r^{2-d+\alpha})\big))
	\\&=c_\gamma r^{-\alpha}\big(1+O(r^{2-d+\alpha})\big)
	=c_\gamma r^{-\alpha}+O(r^{2-d}).
\end{aligned}
\end{align}
Since $2-d+\underline\alpha>0$, this shows that after possibly decreasing $r_*\in(0,r_\circ]$ 
(only depending on $K_*$, $\underline\alpha$ and fixed parameters) we can ensure that 
$g_\ve(t,r)\ge \frac{3}{4}\tilde g(r)$ for all $r\in(\tilde r_\ve,\tilde r_{2,\ve}]$, $t>0$ and $\ve\in(0,\epsilon_*]$. As a consequence, $\tilde r_{1,\ve}(t)> \tilde r_{2,\ve}(t)$
and $\tilde r_{2,\ve}(t)=r_*$.
This, in turn, means that inequality~\eqref{eq:448} is valid for all $r\in(\tilde r_\ve,r_*]$ whenever $\tilde r_\ve(t)< 1$, completing the proof of Proposition~\ref{prop:profile.lb}. 
\end{proof}

	\begin{proposition}[Upper bound]\label{prop:upperBd}
		Use the notations and assume the hypotheses of Proposition~\ref{prop:profile.lb}.
		There exists a finite constant $B$ and a radius $r_*$ only depending on $\kcp,\gamma,d$
		such that for all $t>0$ and all $r\in(0,r_*)$
			\begin{align}\label{eq:upb}
						g(t,r) \le g_c(r) +Br^{2-d},
			\end{align}
		where $g_c(r)=(\Phi')^{-1}(-\frac{1}{2}r^2)$.
	\end{proposition}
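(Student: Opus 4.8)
The plan is to mimic the structure of the proof of the lower bound (Proposition~\ref{prop:profile.lb}), but now running the estimates in the reverse direction: instead of integrating the differential relation $r^{d-1}\partial_rg_\ve+r^dh_\ve(g_\ve)=b_\ve$ outward from a crossing radius, I would integrate it \emph{inward} from the fixed radius $r_*$ (chosen small but uniform), using the crude pointwise control $|b_\ve|=|\partial_tM_\ve|\le\kcp$ from Proposition~\ref{prop:cpbound} together with the fact that $h_\ve\le h$. First I would establish the bound at the level of the approximate solutions $g_\ve$, uniformly in $\ve$ and $t$, and then pass to the limit $\ve\downarrow0$ using the locally uniform convergence $g_\ve\to g$ away from the origin (Proposition~\ref{prop:lim}~\ref{it:approxProp}) — the inequality $g\le g_c+Br^{2-d}$ for fixed $r\in(0,r_*)$ survives the limit since it is pointwise on $(0,\infty)$.

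The key computation: starting from $\frac{\dd}{\dd r}\Phi'(g_\ve)=\frac{\partial_rg_\ve}{h(g_\ve)}=-r\frac{h_\ve(g_\ve)}{h(g_\ve)}+b_\ve r^{1-d}\frac1{h(g_\ve)}$ and now using $h_\ve\le h$ so that $\frac{h_\ve(g_\ve)}{h(g_\ve)}\le1$, together with $h(s)\ge s^{\gamma+1}$ when $s$ is large, I obtain the upper differential estimate $\frac{\dd}{\dd r}\Phi'(g_\ve)\le -r\frac{h_\ve(g_\ve)}{h(g_\ve)}+\kcp r^{1-d}g_\ve^{-(\gamma+1)}$. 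The first term is nonnegative times $-r$ only where I have a lower bound on $\frac{h_\ve}{h}$; the cleaner route is to observe that since we already know (from Lemma~\ref{l:boundiso}, or more simply from the lower bound in Prop.~\ref{prop:profile.lb}) that $g_\ve(r)\gtrsim r^{-\underline\alpha}$ is bounded below near $r_*$ and hence $\frac{h_\ve(g_\ve)}{h(g_\ve)}$ is bounded away from $0$ only where $g_\ve$ is already large — so I should instead argue by a bootstrap/continuity argument analogous to the one in Proposition~\ref{prop:profile.lb}. Concretely, define $\hat r_\ve(t)=\inf\{r\in(0,r_*): g_\ve(t,\rho)\le 2g_c(\rho)\text{ for all }\rho\in(r,r_*)\}$, show via the integrated inequality $\Phi'(g_\ve(r))\le\Phi'(g_\ve(r_*))+\frac12(r_*^2-r^2)+\kcp\int_r^{r_*}\rho^{1-d}g_\ve(\rho)^{-(\gamma+1)}\,\dd\rho$ that on $(\hat r_\ve,r_*)$ one has $g_\ve(r)^{-(\gamma+1)}\le C\,\rho^{2(\gamma+1)/\gamma}$ (from $g_c(\rho)\simeq c_\gamma\rho^{-2/\gamma}$), so the remainder integral is $O(r^{2-d})$ under~\eqref{eq:regime1} since $\frac{2}{\gamma}+2-d>0$; invert $\Phi'$ via the expansion~\eqref{eq:Phi'.inv} to get $g_\ve(r)\le g_c(r)+O(r^{2-d})$, and then choose $r_*$ small (depending only on $\kcp,\gamma,d$) so that the implied constant forces $g_\ve\le\frac32g_c$ on the whole strip, closing the bootstrap and yielding $\hat r_\ve=0$.

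I also need to initialise the argument at $r=r_*$: I must know $g_\ve(t,r_*)\le 2g_c(r_*)$ (say), uniformly in $t,\ve$, to start the continuity argument. This is \emph{not} automatic and is the place where the crude bound $g_\ve(t,r)\le 2\max\{\kcp,d\tilde m\}r^{-d}$ from Lemma~\ref{l:boundiso} would be too weak if $d>2/\gamma$. The cleanest fix is to run the integrated inequality from a slightly larger radius where one can use mass conservation to extract smallness, or — more robustly — to combine the lower bound from Proposition~\ref{prop:profile.lb} with the same inward integration but now over $(\hat r_\ve, r_\#)$ for a radius $r_\#$ slightly larger than the eventual $r_*$, absorbing the value $g_\ve(r_\#)$ into the constant $B$. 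In fact, a slicker approach avoids the initialisation issue entirely: integrate from $r$ outward to $1$ using $g_\ve\le Cr^{-d}$ (Lemma~\ref{l:boundiso}) to bound the tail integral $\int_r^1\rho^{1-d}g_\ve^{-(\gamma+1)}\dd\rho$, which converges and is $O(r^{2-d})$ precisely when $2-d+d(\gamma+1)>0$ — always true — but one must be careful that this gives an upper bound on $\Phi'(g_\ve(r))-\Phi'(g_\ve(1))$ with the right sign; since $\Phi'(g_\ve(1))\le 0$ and $\Phi'$ is increasing this is consistent. \textbf{The main obstacle} I anticipate is exactly this bootstrap closure: ensuring that the constant $B$ in $g_\ve(r)\le g_c(r)+Br^{2-d}$ can be taken \emph{independent of $t$ and $\ve$ and of the unknown crossing radius}, which requires that the ``bad'' integral term be genuinely of lower order $O(r^{2-d})$ relative to $g_c(r)\simeq r^{-2/\gamma}$ — this is where hypothesis~\eqref{eq:regime1}, $\frac{2}{\gamma}+2-d>0$, is essential and cannot be dispensed with (cf.\ Remark~\ref{rem:regime1}).
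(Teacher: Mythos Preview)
Your inward-integration strategy has a genuine gap at the boundary value. Integrating $\frac{\dd}{\dd r}\Phi'(g_\ve)=-r\frac{h_\ve(g_\ve)}{h(g_\ve)}+b_\ve r^{1-d}\frac{1}{h(g_\ve)}$ from $r$ to $r_*$ and using $\frac{h_\ve}{h}\le1$, $|b_\ve|\le\kcp$ yields
\[
\Phi'(g_\ve(r))+\tfrac12r^2\;\le\;\big[\Phi'(g_\ve(r_*))+\tfrac12r_*^2\big]+\kcp\!\int_r^{r_*}\!\rho^{1-d}g_\ve^{-(\gamma+1)}\,\dd\rho.
\]
For this to give $\Phi'(g_\ve(r))\le-\frac12r^2(1+O(r^{2-d+2/\gamma}))$ you need the bracket to be $o(r^2)$ as $r\downarrow0$. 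But the bracket is a \emph{constant} in $r$: it vanishes only if $g_\ve(r_*)=g_c(r_*)$ exactly, and even under your bootstrap hypothesis $g_\ve(r_*)\le 2g_c(r_*)$ one computes $\Phi'(2g_c(r_*))+\frac12r_*^2\approx(1-2^{-\gamma})\frac12r_*^2>0$. Hence for small $r$ the right-hand side becomes positive, falls outside the range of $\Phi'$, and the inequality is vacuous. None of your proposed fixes resolves this: moving to a larger $r_\#$ reproduces the same constant error, and your ``slicker'' route bounds the tail integral using the \emph{upper} bound $g_\ve\le Cr^{-d}$ from Lemma~\ref{l:boundiso}, which yields a \emph{lower} bound on $g_\ve^{-(\gamma+1)}$ --- the wrong direction.

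The paper's proof avoids this by integrating \emph{outward from the crossing radius} $r_\ve(t):=\sup\{r>0:g_\ve(t,\rho)<c_\gamma\rho^{-2/\gamma}\text{ for all }\rho\in(0,r)\}$, at which $g_\ve(r_\ve)=\tilde g(r_\ve)$ holds \emph{exactly}, so $\Phi'(g_\ve(r_\ve))+\frac12r_\ve^2=O(r_\ve^4)$ and no boundary error survives. On $(0,r_\ve)$ the bound $g_\ve<\tilde g$ is trivially sharp, and on $(r_\ve,r_*)$ the lower bound from Proposition~\ref{prop:profile.lb} (with $\alpha=\alpha_c$) supplies $g_\ve(\rho)\gtrsim\rho^{-2/\gamma}$, controlling the $b_\ve$-integral. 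The price of integrating outward is that $\frac{h_\ve}{h}\le1$ now has the wrong sign: the paper handles this by isolating the set $J_\ve=\{\rho:g_\ve(\rho)\ge\ve^{-1}\}$ where $h_\ve\ne h$, observing that on its complement $\frac{h_\ve}{h}\equiv1$, and that $\mathcal L^1(J_\ve)\to0$ as $\ve\downarrow0$ by mass conservation. The resulting bound therefore holds only \emph{after} passing to the limit $g=\lim g_\ve$, not uniformly at the $\ve$-level --- and this is precisely why Proposition~\ref{prop:upperBd} is stated for $g$ rather than $g_\ve$.
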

Note that, in contrast to the lower bound in Proposition~\ref{prop:profile.lb}, the upper bound~\eqref{eq:upb} is formulated only for the limiting function $g$ obtained after sending $\ve\downarrow0$.
	\begin{proof}
			We adopt the notations of Proposition~\ref{prop:profile.lb} and its proof, where here it will suffice to consider the choice $\alpha=\frac{2}{\gamma}$. Thus, we let $\tilde g(r)=c_\gamma r^{-\frac{2}{\gamma}}$ and set
		\begin{align*}
			r_\ve= r_\ve(t)=\sup\{r>0: g_\ve(t,\rho)<\tilde g(\rho)\text{ for all }\rho\in(0,r)\}.
		\end{align*}
	Let $r_*$ be the radius obtained in Proposition~\ref{prop:profile.lb}.
For $r\in(0,r_\ve(t))$ we trivially have $g_\ve(t,r) \le \tilde g(r) = \big(\frac{\gamma}{2}r^2\big)^{-\frac{1}{\gamma}}$, or equivalently (cf.~\eqref{eq:Phi'})
\begin{align}\label{eq:449}
	\Phi'(g_\ve(r)) \le \Phi'(\tilde g(r)) = -\frac{1}{2}r^2(1+O(r^2)), \qquad r\in(0,r_\ve(t)).
\end{align}
The main step in the proof of the upper bound~\eqref{eq:upb} is to establish a bound similar to~\eqref{eq:449}  on the interval $r\in[r_\ve(t),r_*)$ in the case where
 $r_\ve(t)< r_*$. Of course, due to the possible formation of a point mass at the origin, 
 such a bound can in general only be expected to hold true up to some error term that tends to zero as $\ve\downarrow0$. 

If $r_\ve(t)< r_*$, we note that as in the proof of Proposition~\ref{prop:profile.lb} we have the formula 
		\begin{align}
			\frac{\dd}{\dd r}\Phi'(g_\ve) =\frac{\partial_rg_\ve}{h(g_\ve)} 
			&= -r\frac{h_\ve(g_\ve)}{h(g_\ve)} + b_\ve r^{1-d}\frac{1}{h(g_\ve)}.
		\end{align}
		Hence, for all $r\in[r_\ve,r_*)$,
		\begin{align}
		\Phi'(g_\ve(r)) &=\Phi'(\tilde g(r_\ve)) 
			-\int_{(r_\ve,r)}\rho\frac{h_\ve(g_\ve)}{h(g_\ve)}\,\dd\rho
			+ \int_{(r_\ve,r)}b_\ve \rho^{1-d}\frac{1}{h(g_\ve)}\,\dd\rho,
		\end{align}
where we omitted the (fixed) time argument $t$.
		To proceed, we define the set 
		\begin{align}
			J_\ve=J_\ve(t)=\{\rho\in[r_\ve(t),r_*):g_\ve(t,\rho)\ge {\ve}^{-1}\}.
		\end{align}
		On $[r_\ve,r_*)\setminus J_\ve$ we have $\frac{h_\ve(g_\ve)}{h(g_\ve)}\equiv 1$, while on $J_\ve$ we only know that $0\le\frac{h_\ve(g_\ve)}{h(g_\ve)}\le 1$.
		Hence, we may estimate for $r\in[r_\ve,r_*)$, using also the bound $g_\ve(\rho)\gtrsim \rho^{-\frac{2}{\gamma}}$ for $\rho\in(r_\ve,r_*)$ from Proposition~\ref{prop:profile.lb},
			\begin{align}\label{eq:450}
				\begin{aligned}
			\Phi'(g_\ve(r)) &\le\Phi'\big(c_\gamma r_\ve^{-\frac{2}{\gamma}}\big) 
			-\int_{(r_\ve,r)\setminus J_\ve}\rho\,\dd\rho
			+CK_*r^{2-d+(\gamma+1)\frac{2}{\gamma}}
			\\&\le -\frac{1}{2}r_\ve^2 + O(r^4)
			-\frac{1}{2}(r^2-r_\ve^2)+r_*\mathcal{L}^1(J_\ve)
			+CK_*r^{4-d+\frac{2}{\gamma}}
			\\&\le  			-\frac{1}{2}r^2
			+C(K_*)r^{4-d+\frac{2}{\gamma}}
			+r_*\mathcal{L}^1(J_\ve).			
		\end{aligned}
		\end{align}
	Here, we further used~\eqref{eq:Phi'} in the second step and $d>\frac{2}{\gamma}$ in the third step.

Combining~\eqref{eq:450} with~\eqref{eq:449}, we deduce 
(independently of whether $r_\ve<r_*$ or $r_\ve\ge r_*$)
	\begin{align}
	\Phi'(g_\ve(r)) \le	-\frac{1}{2}r^2\big(1+O(r^{2-d+\frac{2}{\gamma}})\big)
	+r_*\mathcal{L}^1(J_\ve)\qquad\text{ for all }r\in (0,r_*),
\end{align}
where $O=O(\cdot)$ only depends on $K_*$ and $\gamma$.
Mass conservation, i.e.\ $\int_{\mathbb{R}^d} f_\ve(t)\equiv \int_{\mathbb{R}^d}\fin$, implies that $\lim_{\ve\to0}\mathcal{L}^1(J_\ve(t))=0$. 
	Thus, sending $\ve\to0$, we infer the bound
	$\Phi'(g(r)) \le -\frac{1}{2}r^2(1+O(r^{2-d+\frac{2}{\gamma}}))$ for all $r\in(0,r_*)$.
	Finally, we invoke~\eqref{eq:Phi'.inv} and arrive at
		\begin{align}
		g(r) &\le c_\gamma r^{-\frac{2}{\gamma}}(1+O(r^{2-d+\frac{2}{\gamma}}))(1+O(r^2))
		=g_c(r)+O(r^{2-d}),\quad r\in (0,r_*).
	\end{align}
	\end{proof}
	
 For anisotropic data, the approximate solutions $\{f_\ve\}$ are dominated by an isotropic  scheme $\{\hat f_\ve\}$ (cf.\ Section~\ref{sssec:ani.ex}). Hence, the density $f(t,v)$ of the regular part of the limiting measure in Proposition~\ref{prop:lim} inherits the upper bound obtained above for the isotropic case.
	\begin{corollary}[Upper bound on space profile: anisotropic case]
		\label{cor:upper.ani}
		In addition to~\ref{it:HPspc}, \ref{it:HP.init} suppose that $\tfrac{2}{\gamma}+2-d>0$.
		There exists a finite constant $\hat B$ and a radius $\hat r_*$ only depending on $\fin$ (non-explicitly) and on $\gamma,d$ such that for all $t>0$ and all $v$ with $|v|\in(0,\hat r_*)$
		\begin{align*}
			f(t,v) \le f_c(v) +\hat B|v|^{2-d}.
		\end{align*}
	In particular, the point mass at the origin $t\mapsto \mu_t(\{0\})=m-\int f(t,\cdot)$ is continuous (as a consequence of Lebesgue's dominated convergence theorem).
	\end{corollary}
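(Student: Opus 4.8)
If $\fin$ is isotropic, the asserted pointwise bound is precisely Proposition~\ref{prop:upperBd} (recall $f_c(v)=g_c(|v|)$), so I may assume $\fin$ is anisotropic, hence $\fin\in L^\infty_\ell(\mathbb{R}^d)$ for some $\ell>3d+1$. The plan is then to compare with the isotropic envelope $\hat f_{\mathrm{in}}(v)=\|\fin\|_{L^\infty_\ell}(1+|v|^\ell)^{-1}\ge\fin(v)$ of Section~\ref{sssec:ani.ex}. Since $\ell>3d+1$, the function $\hat f_{\mathrm{in}}$ is non-negative, isotropic, smooth and decays polynomially, so it satisfies the isotropic version of~\ref{it:HP.init} --- in fact $\hat f_{\mathrm{in}}\in(L^\infty_d\cap L^1_{2d+1})(\mathbb{R}^d)$, cf.\ Section~\ref{sssec:ani.ex} --- as well as~\ref{eq:hpinit}; consequently the results of Section~\ref{sec:profile} all apply to the (isotropic) flow it generates. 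Write $\hat f_\ve$ for the global mild solution of~\eqref{eq:fpnreg} with datum $\hat f_{\mathrm{in}}$, and $\hat f$ for the corresponding limiting density, which is uniquely determined by Proposition~\ref{prop:lim}~\ref{it:uniq.sola}. By Proposition~\ref{prop:upperBd} (written radially, $\hat g(t,|v|):=\hat f(t,v)$) there are a radius $\hat r_*\in(0,1]$ and a constant $\hat B<\infty$, depending only on $\gamma,d$ and on the Lipschitz constant~\eqref{eq:defK} of the envelope flow --- which in turn is controlled solely by $\|\fin\|_{L^\infty_\ell}$, $\ell$ and fixed parameters --- such that $\hat f(t,v)\le f_c(v)+\hat B|v|^{2-d}$ for all $t>0$ and $|v|\in(0,\hat r_*)$.

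It remains to transfer this bound to $f$. By the comparison principle for mild solutions of~\eqref{eq:fpnreg} (Lemma~\ref{l:locex.mild}~\ref{it:cp}) together with $\fin\le\hat f_{\mathrm{in}}$ we have $f_\ve\le\hat f_\ve$ on $[0,\infty)\times\mathbb{R}^d$ for every $\ve$ (cf.\ Section~\ref{sssec:ani.ex}). Choosing the subsequence $\ve\downarrow0$ along which $f_\ve\to f$ in $C^{1,2}_{\loc}((0,\infty)\times(\mathbb{R}^d\setminus\{0\}))$ (Proposition~\ref{prop:lim}~\ref{it:approxProp}), we also have $\hat f_\ve\to\hat f$ in $C^{1,2}_{\loc}$ along that same sequence, by uniqueness of the isotropic limit. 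Passing to the limit in $f_\ve(t,v)\le\hat f_\ve(t,v)$ at a fixed $t>0$ and $v\neq0$ gives $f(t,v)\le\hat f(t,v)\le f_c(v)+\hat B|v|^{2-d}$ whenever $|v|\in(0,\hat r_*)$, which is the claim.

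For the continuity statement, note first that $\mu_t(\{0\})=m-\int_{\mathbb{R}^d}f(t,\cdot)\,\dd v$ by Proposition~\ref{prop:lim}~\ref{it:dec}, so it suffices to prove $t\mapsto\int_{\mathbb{R}^d}f(t,v)\,\dd v$ continuous on $(0,\infty)$. Fix $t_0>0$ and $\delta\in(0,t_0)$. On $B_{\hat r_*}$ the bound just proved dominates $\{f(t,\cdot)\}_{t\in[t_0-\delta,t_0+\delta]}$ by $v\mapsto f_c(v)+\hat B|v|^{2-d}\in L^1(B_{\hat r_*})$ (integrability near $0$ holds since $d>\tfrac2\gamma$ and $2-d>-d$); on the complement, the moment bound of Lemma~\ref{l:moments}, which passes to $f$ by Fatou's lemma, gives $\int_{|v|>R}f(t,\cdot)\,\dd v\lesssim R^{-n}$ uniformly in $t$ for $n$ large, while $f$ is continuous --- hence bounded --- on the compact sets $\{\hat r_*\le|v|\le R\}\times[t_0-\delta,t_0+\delta]$. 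Since $f\in C^{1,2}((0,\infty)\times(\mathbb{R}^d\setminus\{0\}))$ implies $f(t,v)\to f(t_0,v)$ for every $v\neq0$ as $t\to t_0$, Lebesgue's dominated convergence theorem (on $B_{\hat r_*}$ and on each annulus, combined with the uniform tail smallness) yields $\int_{\mathbb{R}^d}f(t,\cdot)\,\dd v\to\int_{\mathbb{R}^d}f(t_0,\cdot)\,\dd v$.

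The argument is essentially mechanical. The one point deserving attention is this last step: the crude pointwise bound $f(t,v)\lesssim|v|^{-d}$ from Corollary~\ref{cor:boundaniso} fails to be integrable near the origin, so it is precisely the sharp profile bound established above --- an $L^1$ majorant near $\{v=0\}$ that is uniform in $t$ --- which makes dominated convergence applicable there.
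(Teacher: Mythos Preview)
Your proof is correct and follows essentially the same approach as the paper: comparison with the isotropic envelope $\hat f_{\mathrm{in}}$ at the level of the approximate solutions, passage to the limit, and then dominated convergence for the continuity of the point mass. The paper's own argument is stated very tersely (one sentence before the corollary plus the parenthetical hint in the statement), and you have filled in the details accurately; your closing remark that the crude $|v|^{-d}$ bound from Corollary~\ref{cor:boundaniso} would not suffice, and that the sharp profile bound is what furnishes an integrable majorant near the origin, is exactly the point.
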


\subsection{Instantaneous regularisation}\label{ssec:inst.reg}

For the nonlinear problem~\eqref{eq:fpn} the Lebesgue space $L^{p_c}(\mathbb{R}^d)$, $p_c:=\frac{\gamma d}{2}$ is critical (as regards high values of the density). Thus, for $p>p_c$ one would expect equation~\eqref{eq:fpn} to enjoy a smoothing property in $L^{p}$. The following result formalises these heuristics. 

\begin{proposition}[Smoothing out subcritical singularities]\label{prop:ins.subcr}
	Let $\{f_\ve\}_{\ve\in(0,\ve_0]}$ be a family of (suitably regular) non-negative mild solutions of the $\ve$-regularised problems~\eqref{eq:fpnreg}\footnote{The family $\{f_\ve\}$ does not have to take the same initial data.} with uniformly controlled mass $\|f_\ve(t)\|_{L^1}\le m$.
	Let $p>p_c:=\frac{\gamma d}{2}$, let $t_0\ge0$, and suppose the following conditions:
	\begin{enumerate}[label=\textup{(C\arabic*)}]
		\item\label{it:L} There exists $L<\infty$ such that $\|f_\ve(t_0,\cdot)\|_{L^p(\mathbb{R}^d)}\le L$
		for all $\ve\in(0,\ve_0]$.
		\item\label{it:L'} There exists $ t_1\in(t_0,t_0+1]$, a constant $L'<\infty$ and a radius $r_0\in(0,1]$ such that $f_\ve(t,v)\le L'|v|^{-\frac{2}{\gamma}}$		
		for all $t\in[t_0,t_1]$, all $v\in\mathbb{R}^d$ with $|v|\le r_0$ and all $\ve\in(0,\ve_0]$.
	    \item\label{it:L''} For all $\tilde r_0>0$ there exists $L''=L''(\tilde r_0)<\infty$ such that $f_\ve\le L''$ 
	    in $[t_0,\infty)\times\{v:|v|\ge \tilde r_0\}$ for all $\ve\in(0,\ve_0]$.
	\end{enumerate}	
	Then there exists $T=T(L,L', L''(r_0), p,d,\gamma,m)\in(0,1]$ such that 
for $\hat T:=\min\{T,t_1{-}t_0\}$ and for all $\tau\in(0,\hat T]$ 
\begin{align}\label{eq:210}
	\sup_{\ve\in(0,\ve_0]}\;\sup_{t\in [t_0+\tau,t_0+\hat T]}\|f_\ve(t,\cdot)\|_{L^\infty(\mathbb{R}^d)}<\infty.
\end{align}
\end{proposition}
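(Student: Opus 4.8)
The plan is to run a Moser/De Giorgi-type iteration on the $\ve$-regularised equation~\eqref{eq:fpnreg}, using conditions~\ref{it:L}--\ref{it:L''} to control the nonlinear drift term. Because $h_\ve(s)\le h(s)=s(1+s^\gamma)$ with $\eta_\ve(s)\le s^\gamma$ uniformly in $\ve$, the equation has, for the purposes of energy estimates, the structure $\partial_tf_\ve=\Delta f_\ve+\divv(v\,h_\ve(f_\ve))$ with $h_\ve(f_\ve)\lesssim f_\ve+f_\ve^{\gamma+1}$, and all the estimates below will be $\ve$-uniform precisely because of these $\ve$-uniform bounds. The first step is therefore to fix $t_0$, work on the time interval $[t_0,t_1]$, and derive the basic differential inequality for $\|f_\ve(t)\|_{L^q}^q$ for $q\ge p$: testing the equation with $q f_\ve^{q-1}$ (justified by the regularity of $f_\ve$ from Lemma~\ref{l:locex.mild} and a cutoff/truncation argument, passing to the limit as in the moment computation~\eqref{eq:mom.comp}) gives
\begin{align*}
\frac{\dd}{\dd t}\int_{\dom}f_\ve^q\,\dd v + c\,q\int_{\dom}|\nabla f_\ve^{q/2}|^2\,\dd v
\;\le\; C q\int_{\dom}|v|\,h_\ve(f_\ve)\,f_\ve^{q-1}\,|\nabla f_\ve|\,\frac{1}{f_\ve}\,\dd v,
\end{align*}
and after integration by parts on the drift term the right-hand side is controlled by $Cq\int |v|\,\Theta_\ve(f_\ve)\,f_\ve^{q-2}\nabla f_\ve\,\dd v$ with $\Theta_\ve$ a primitive-type quantity; the practical upshot (this is routine and I will not grind it) is a bound of the shape $\frac{\dd}{\dd t}\|f_\ve\|_{L^q}^q+c q\|\nabla f_\ve^{q/2}\|_{L^2}^2\le Cq\big(\|f_\ve\|_{L^q}^q+\int |v|\,f_\ve^{q+\gamma}\,\dd v\big)$, where the weight $|v|$ is handled by splitting into $|v|\le r_0$ and $|v|\ge r_0$.

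The second, crucial step is to absorb the dangerous term $\int f_\ve^{q+\gamma}\,\dd v$. On the region $|v|\ge r_0$ we use~\ref{it:L''} to bound $f_\ve^\gamma\le L''(r_0)^\gamma$, so the contribution is $\lesssim \|f_\ve\|_{L^q}^q$. On $|v|\le r_0$ we use~\ref{it:L'}: there $f_\ve^\gamma\le (L')^\gamma |v|^{-2}$, so $\int_{|v|\le r_0} f_\ve^{q+\gamma}\lesssim \int_{|v|\le r_0}|v|^{-2}f_\ve^q\,\dd v$. Here is where $p>p_c=\tfrac{\gamma d}{2}$ enters: by Sobolev's inequality $\|f_\ve^{q/2}\|_{L^{2^*}}^2\gtrsim \|f_\ve^{q/2}\|_{L^2}^2-\ldots$ interpolated with Hardy's inequality $\int |v|^{-2}g^2\lesssim \|\nabla g\|_{L^2}^2$ (valid for $d\ge3$; for $d=1,2$ one uses instead that the regime~\ref{it:HPspc} still forces the relevant exponent balance, or a weighted Sobolev–Hardy inequality), one gets $\int_{|v|\le r_0}|v|^{-2}f_\ve^q \le \kappa\|\nabla f_\ve^{q/2}\|_{L^2}^2 + C_\kappa\|f_\ve\|_{L^q}^q$ — but this naive Hardy bound does not see the subcriticality. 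The honest route is: write $\int_{|v|\le r_0}|v|^{-2}f_\ve^q$ with the constraint that $\|f_\ve\|_{L^p}$ is the controlled norm, interpolate $L^q$ between $L^p$ and $L^{2^*q/2}$ (the Sobolev target of $f_\ve^{q/2}$), use Hölder on the weight, and exploit $p>\tfrac{\gamma d}{2}$ to check that the power of $\|\nabla f_\ve^{q/2}\|_{L^2}$ produced is strictly less than $2$, so Young's inequality absorbs it into the good term at the cost of a constant depending on $\|f_\ve\|_{L^p}\le L$ (and on $p,d,\gamma$). This yields, for $\tau\le \hat T$ with $\hat T$ small depending on $L,L',L''(r_0),p,d,\gamma,m$, a closed inequality $\frac{\dd}{\dd t}\|f_\ve(t)\|_{L^q}^q\le C(q)\big(1+\|f_\ve(t)\|_{L^q}^q\big)$ on $[t_0,t_0+\hat T]$, giving a uniform $L^q$-bound on that interval; since the starting point is the $L^p$-bound~\ref{it:L}, one first obtains uniform $L^q$ for all $q<\infty$ (with constants possibly blowing up as $q\to\infty$ at fixed time, but finite at each $q$), and then runs the standard Moser iteration $q_k=2^k p$ tracking how $\|f_\ve(t_0+\tau)\|_{L^{q_k}}$ depends on $q_k$ and on the lengths of shrinking time sub-intervals, to land at $\|f_\ve(t_0+\tau,\cdot)\|_{L^\infty}<\infty$ uniformly in $\ve$ for every $\tau\in(0,\hat T]$, which is exactly~\eqref{eq:210} (after, if desired, a final sup over $t\in[t_0+\tau,t_0+\hat T]$, obtained by restarting the estimate from time $t_0+\tau/2$).

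The main obstacle is the absorption step on $\{|v|\le r_0\}$: the drift-generated term scales exactly like the diffusion when the density sits at the critical profile $|v|^{-2/\gamma}$, so one genuinely needs the strict inequality $p>p_c$ (equivalently, that $f_\ve$ is, in an $L^p$ sense, strictly below the critical threshold at time $t_0$) to gain a small power and close the estimate; condition~\ref{it:L'} only gives the borderline pointwise bound, not smallness, so the room must come from the $L^p$-norm. Keeping all constants in the Gronwall/Moser loop $\ve$-independent is automatic from the $\ve$-uniform structural bounds on $h_\ve$, and keeping them independent of the particular $f_\ve$ (the family need not share initial data) is ensured because only the quantities $L,L',L''(r_0),m$ enter. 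The dimensional bookkeeping for $d=1,2$, where the plain Hardy inequality degenerates, requires replacing it by the appropriate Hardy–Sobolev inequality with the weight $|v|^{-2}$ restricted to a bounded set (or, for $d=1$, by the elementary pointwise bound together with $\gamma>2$ from~\ref{it:HPspc}); this is a technical nuisance rather than a conceptual difficulty.
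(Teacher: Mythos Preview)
Your route is genuinely different from the paper's. The paper never touches energy estimates or Hardy--Sobolev inequalities; it works entirely with the mild formulation~\eqref{eq:115} and the Fokker--Planck semigroup bounds~\eqref{eq:FPsemigr}. Its key observation is that condition~\ref{it:L'} gives $|w|\,f_\ve^{\gamma/2}(s,w)\le C(L')$ on $\{|w|\le r_0\}$, which in the Duhamel integral converts the nonlinear term $|w|\,\vartheta_\ve(f_\ve)\le |w|\,f_\ve^{\gamma+1}$ into $C(L')\,f_\ve^{\gamma/2+1}$ near the origin (and~\ref{it:L''} handles $\{|w|>r_0\}$). This reduced effective nonlinearity lets one run a contraction-type estimate in the time-weighted norm $\|\cdot\|_{Z_t^{(\tilde p,\tilde q)}}$ with $\tilde q=2\tilde p$, check that the subcriticality $p>p_c$ makes the relevant exponents satisfy $a,b<1$, and then iterate $\tilde p\mapsto 2\tilde p$ finitely many times to reach $L^\infty$. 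The argument is a semigroup bootstrap, not a Moser iteration.

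Your energy approach is a reasonable alternative, but your emphasis on Hardy is a detour, and the absorption step as written has a circularity you do not address. After integrating the drift term by parts (using $\int v\cdot\nabla\Psi(f_\ve)=-d\int\Psi(f_\ve)$ with $\Psi'(s)=s^{q-2}h_\ve(s)$), the weight $|v|$ disappears entirely and the dangerous term is simply $\int f_\ve^{q+\gamma}$, with no $|v|^{-2}$ in sight. At that point condition~\ref{it:L'} is not needed: plain Gagliardo--Nirenberg interpolation of $L^{q+\gamma}$ between $L^q$ and the Sobolev target $L^{q\,2^*/2}$ produces a gradient power $d\gamma/q<2$ precisely when $q>p_c$, which yields a closed superlinear ODE for $\|f_\ve(t)\|_{L^p}^p$ and hence a short time $\hat T=\hat T(L,p,d,\gamma)$ on which the $L^p$ bound persists---this is the continuity/bootstrap step you need but do not state. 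Your proposed route through $f_\ve^\gamma\le (L')^\gamma|v|^{-2}$ and Hardy runs into the issue you yourself flag (the Hardy constant is fixed, not small, so absorption fails for large $q$), and your fix ``interpolate with $\|f_\ve\|_{L^p}\le L$'' is circular at the first step $q=p$ since $\|f_\ve(t)\|_{L^p}$ is the unknown there. Once the $L^p$ bound is secured on $[t_0,t_0+\hat T]$ the higher-$q$ steps and the passage to $L^\infty$ can be done either by Moser iteration (with the usual bookkeeping of constants in $q$) or, more cleanly, by the same semigroup smoothing the paper uses. Either way your scheme can be completed, but the paper's Duhamel argument is both shorter and more transparent here, and it is the only place where~\ref{it:L'} and~\ref{it:L''} genuinely enter---to tame the $|w|$ weight that the mild formulation carries but the energy formulation does not.
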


\begin{proof}
	We proceed in two steps. In a first step, we derive smoothing estimates based on the mild formulation~\eqref{eq:115} satisfied by $f_\ve$, 
	where as in Section~\ref{sec:ex} the nonlinear term is to be rewritten analogously to~\eqref{eq:116}.
	
	\underline{Step~1}: \textit{localised smoothing estimate}. 
	
Fix some sufficiently small $\epsilon_1>0$ such that 
\begin{align}\label{eq:700}
	\frac{p_c}{p}\le 1-2\epsilon_1.
\end{align}
	Let $\tilde p,\tilde q\in[p,\infty]$ with $\tilde p\le\tilde q$. Then
	\begin{align}\label{eq:def.a}
		a:=\frac{d\gamma}{4}\frac{1}{\tilde q}+\frac{1}{2} 
		= \frac{1}{2}\bigg(\frac{p_c}{\tilde q}+1\bigg)\le 1-\epsilon_1.
	\end{align}
Defining $b:=\frac{d}{2}(\frac{1}{\tilde p}-\frac{1}{\tilde q})(\frac{\gamma}{2}+1)$
we further have 
\begin{align}\label{eq:a+b}
	a+b-\frac{d}{2}\left(\frac{1}{\tilde p}-\frac{1}{\tilde q}\right) 
	&= \frac{d}{2}\left(\frac{1}{\tilde p}-\frac{1}{\tilde q}\right)\frac{\gamma}{2}
	+\frac{d\gamma}{4}\frac{1}{\tilde q}+\frac{1}{2} 
	=\frac{1}{2}\left(\frac{p_c}{\tilde p}+1\right)\le 1-\epsilon_1.
\end{align}
We assert that if $\tilde p,\tilde q$ are sufficiently close in the sense that
\begin{align}\label{eq:b.ineq}
	b=\tfrac{d}{2}(\tfrac{1}{\tilde p}-\tfrac{1}{\tilde q})(\tfrac{\gamma}{2}+1)\le 1-\hat\epsilon_1
\end{align}
for some $\hat\epsilon_1>0$, there exists an (explicit) strictly increasing function $\kappa\in C([0,1])$ only depending on $\epsilon_1,\hat\epsilon_1,d$ and on $L'$ and $L'':= L''(r_0)$ with $\kappa(0)=0$, and a finite constant $C_1=C_1(d)$ such that 
for all $t\in[0,t_1{-}t_0]$
\begin{align}\label{eq:step1}
	\|\chi_{\{|v|\le r_0\}}f_\ve\stn\|_{Z_t}\le C_1\|f_\ve(t_0,\cdot)\|_{L^{\tilde p}(\mathbb{R}^d)} 
	+ \kappa(t)\|f_\ve\stn\|_{Z_t}\big(\|f_\ve\stn\|_{Z_t}^\frac{\gamma}{2}+1\big),\qquad
\end{align}
where $f_\ve\stn(\tau,\cdot):=f_\ve(t_0+\tau,\cdot)$ and
\begin{align}
	\|\tilde f\|_{Z_t}:=	\|\tilde f\|_{Z_t^{(\tilde p,\tilde q)}}:=\sup_{s\in[0,t]}\nu(s)^{\frac{d}{2}(\frac{1}{\tilde p}-\frac{1}{\tilde q})}\|\tilde f(s,\cdot)\|_{L^{\tilde q}(\mathbb{R}^d)}.
\end{align}

\textit{Proof of Step 1.} 
Let $\zeta\in C^\infty_c(\mathbb{R}^d)$ with $0\le \zeta\le 1$, $\zeta=1$ on $\{|v|\le r_0\}$, 
$\supp\zeta\subset B_{2r_0}(0)$. 
	
		By the mild solution property of $f_\ve$, we have (cf.\ Sec.~\ref{ssec:mild})
	\begin{equation}\label{eq:116.}
		\begin{aligned}
			f_\ve\stn(\tau,v) 
			&=\int_{\mathbb{R}^d}\mathcal{F}(\tau,v,w)f_\ve(t_0,w)\,\dd w
			\\&\qquad +\int_0^\tau \ee^{-(\tau-s)}\int_{\mathbb{R}^d}\nabla_v\mathcal{F}(\tau{-}s,v,w)\cdot w\,\vt_\ve(f_\ve\stn(s,w))\,\dd w\dd s.
		\end{aligned}
	\end{equation}
	
 Using the bound $|\vt_\ve(g)|\le |g|^{\gamma+1}$ (cf.\ def.~\eqref{eq:555}), the fact that 
 $|w|f^\frac{\gamma}{2}_\ve(s,w)\le C(L',\gamma)$\,\footnote{Any dependence on the fixed parameter $\gamma$ will henceforth not be explicitly indicated.} 
 for $|w|\le r_0$ and $|f_\ve(s,w)|\le L''$ for $|w|\ge r_0$
 for all  $s\in[t_0,t_1]$ (cf.~\ref{it:L'} and~\ref{it:L''}),
 we now estimate for $0\le s<\tau\le t\le t_1{-}t_0$
\begin{align}
 	\bigg|\int_{\mathbb{R}^d}&\nabla_v\mathcal{F}(\tau{-}s,v,w)\cdot\big(w\,\vt_\ve(f_\ve\stn(s,w))\big)\,\dd w\bigg|
 	\\&\le C(L')
 	\int_{\{|w|\le r_0\}}|\nabla_v\mathcal{F}(\tau{-}s,v,w)|(f\stn_\ve)^{\frac{\gamma}{2}+1}(s,w)\,\dd w
 	\\&\quad +C(L'')\,\ee^{\tau{-}s}\int_{\{|w|>r_0\}}|\nabla_v\mathcal{F}(\tau{-}s,v,w)|\big(|v|+|\ee^{-(\tau{-}s)}w-v|\big)\,f_\ve\stn(s,w)\,\dd w.
 \end{align}
The integrals on the RHS will be handled similarly as in the proof of~\cite[Prop.~A.1]{CLR_2009}. 
 For estimating the $L^{\tilde q}(\mathbb{R}^d)$-norm, Young's convolution inequality is employed.
 For the first term on the RHS we invoke inequality~\eqref{eq:FPsemigr} and estimate
	\begin{align}
	\bigg\|	\int_{\mathbb{R}^d}&|\nabla_v\mathcal{F}(\tau{-}s,v,w)|(f\stn_\ve)^{\frac{\gamma}{2}+1}(s,w)\,\dd w\bigg\|_{L^{\tilde q}(\mathbb{R}^d)}
	\\&\le   C\nu(\tau{-}s)^{-\frac{1}{2}-\frac{d}{2}\left(\frac{\frac{\gamma}{2}+1}{\tilde q}-\frac{1}{\tilde q}\right)}
	\|(f\stn_\ve)^{\frac{\gamma}{2}+1}(s)\|_{L^{\tilde q/{(\frac{\gamma}{2}+1)}}(\mathbb{R}^d)}
	\\&\le  C\nu(\tau{-}s)^{-\frac{1}{2}-\frac{d\gamma}{4\tilde q}}
	\nu(s)^{-\frac{d}{2}(\frac{1}{\tilde p}-\frac{1}{\tilde q})(\frac{\gamma}{2}+1)}\|f\stn_\ve\|_{Z_t}^{\frac{\gamma}{2}+1}
	\\&\quad = C\nu(\tau{-}s)^{-a}\nu(s)^{-b}\|f\stn_\ve\|_{Z_t}^{\frac{\gamma}{2}+1}.
	\end{align}
Here and below, $C$ denotes a positive constant that only depends on fixed parameters, but which may change from line to line.

	We next estimate 
	\begin{align}
		\begin{aligned}
		\bigg\|		\zeta(v)\int_{\{|w|>r_0\}}&|\nabla_v\mathcal{F}(\tau{-}s,v,w)||v|\,f_\ve\stn(s,w)\,\dd w\bigg\|_{L^{\tilde q}(\mathbb{R}^d)}
			\\&\le C	\bigg\|	 \int_{\mathbb{R}^d}|\nabla_v\mathcal{F}(\tau{-}s,v,w)|f_\ve\stn(s,w)\,\dd w\bigg\|_{L^{\tilde q}(\mathbb{R}^d)}
			\\&\le C\nu(\tau{-}s)^{-\frac{1}{2}}\nu(s)^{-\frac{d}{2}(\frac{1}{\tilde p}-\frac{1}{\tilde q})}
			\|f\stn_\ve\|_{Z_t},
		\end{aligned}
	\end{align}
where the second step follows from~\eqref{eq:FPsemigr}.
	
	Finally, the rapid decay of the Fokker--Planck kernel allows us to further estimate
	 \begin{align}
		\begin{aligned}
			\bigg\|\int_{\mathbb{R}^d}|\nabla_v\mathcal{F}(\tau{-}s,v,w)|&|\ee^{-(\tau{-}s)}w-v|\,f_\ve\stn(s,w)\,\dd w\bigg\|_{L^{\tilde q}(\mathbb{R}^d)}
			\\&\le C\nu(\tau{-}s)^{-\frac{1}{2}}\nu(s)^{-\frac{d}{2}(\frac{1}{\tilde p}-\frac{1}{\tilde q})}
			\|f\stn_\ve\|_{Z_t},
		\end{aligned}
	\end{align}
see Lemma~\ref{l:rdfp} for details.	
	
	Inserting the above estimates into~\eqref{eq:116.}, we infer for $\tau\le t\in[0,t_1{-}t_0]\subseteq[0,1]$ 
	\begin{equation}\label{eq:200}
		\begin{aligned}
		&\nu(\tau)^{\frac{d}{2}(\frac{1}{\tilde p}-\frac{1}{\tilde q})}\|f_\ve\stn(\tau)\zeta\|_{L^{\tilde q}(\mathbb{R}^d)}\le
			C_1\|f_\ve(t_0)\|_{L^{\tilde p}(\mathbb{R}^d)}
			\\&\qquad +C(L',L'')\,\nu(\tau)^{\frac{d}{2}(\frac{1}{\tilde p}-\frac{1}{\tilde q})}\int_0^\tau \nu(\tau{-}s)^{-a}\nu(s)^{-b}\dd s\,
			\|f\stn_\ve\|_{Z_t}\big(\|f\stn_\ve\|_{Z_t}^\frac{\gamma}{2}+1\big),
		\end{aligned}
	\end{equation}
where we used once more inequality~\eqref{eq:FPsemigr} as well as the fact that $a\ge\frac{1}{2}$ and $b\ge\frac{d}{2}(\frac{1}{\tilde p}-\frac{1}{\tilde q})$.
To proceed we estimate for $t\in[0,1]$, using the bound $s\le2s\le\nu(s)\le 2\ee^{2}s$ for all $s\in[0,1]$
and a change of variables,
\small
\begin{align}
	\sup_{\tau\in[0,t]} C\,\nu(\tau)^{\frac{d}{2}(\frac{1}{\tilde p}-\frac{1}{\tilde q})}\int_0^\tau \nu(\tau{-}s)^{-a}\nu(s)^{-b}	\,\dd s
	&\le C\!\sup_{\tau\in[0,t]}\tau^{\frac{d}{2}(\frac{1}{\tilde p}-\frac{1}{\tilde q})+1-a-b}
		\int_0^1 (1{-}\tilde s)^{-(1-\epsilon_1)}\tilde s^{-(1-\hat\epsilon_1)}	\dd \tilde s
		\\&\le Ct^{\epsilon_1}
		\int_0^1 (1{-}\tilde s)^{-(1-\epsilon_1)}\tilde s^{-(1-\hat\epsilon_1)}	\dd \tilde s
	\;	\;=:\;\;\kappa(t),
\end{align}	
\normalsize
where we abbreviated $C=C(L',L'')\in(0,\infty)$.
In the first line we used inequality~\eqref{eq:def.a} and hypothesis~\eqref{eq:b.ineq},  in the second line we used~\eqref{eq:a+b}.

	Hence, for all $t\in[0,t_1{-}t_0]$
\begin{equation}\label{eq:201}
	\begin{aligned}
		\|\zeta\, f_\ve\stn\|_{Z_t}&\le C_1
		\|f_\ve(t_0)\|_{L^{\tilde p}(\mathbb{R}^d)}
		+\kappa(t)\|f\stn_\ve\|_{Z_t}\big(	\|f\stn_\ve\|_{Z_t}^\frac{\gamma}{2}+1\big),
	\end{aligned}
\end{equation}
	which proves the assertion of Step~1.

\bigskip

\underline{Step 2.} We are now ready to \textit{complete the proof of Proposition~\ref{prop:ins.subcr}} using estimate~\eqref{eq:step1} 
and property~\ref{it:L''} with $L'':=L''(r_0)$.
The idea is to perform a finite number of iterations in the integrability exponents to eventually upgrade the $\ve$-uniform $L^p$ bound on $f_\ve(t_0)$ to an $\ve$-uniform $L^\infty$ bound on $f_\ve(t_0+\tau)$ for given $\tau>0$ small. 

It is elementary to verify that for any $\tilde p\ge p>p_c$ and for $\tilde q:=2\tilde p$  
the tuple $(\tilde p,\tilde q)$ satisfies the hypotheses of Step~1 with parameter $\epsilon_1$ only depending on $p,\gamma,d$ and with $\hat\epsilon_1=\frac{1}{4}$. 
Indeed, for this choice we have
$\tfrac{d}{2}(\tfrac{1}{\tilde p}-\tfrac{1}{\tilde q})(\tfrac{\gamma}{2}+1)
=\tfrac{p_c}{\tilde p}\tfrac{1}{2}(\tfrac{1}{2}+\tfrac{1}{\gamma})
\le \frac{3}{4}=1-\hat \epsilon_1$, showing~\eqref{eq:b.ineq}.
Hence, by Step~1, there exists a strictly increasing function $\kappa\in C([0,1])$ with $\kappa(0)=0$ only depending on $p,\gamma,d$ and on $L',L''$
such that for any $\tilde p\ge p$, for $\tilde q=2\tilde p$, and all $t\in[0,t_1{-}t_0]$
\begin{align}\label{eq:step1.appl}
	\|\chi_{\{|v|\le r_0\}}f_\ve\stn\|_{Z_t}\le C_1\|f_\ve(t_0,\cdot)\|_{L^{\tilde p}(\mathbb{R}^d)} 
	+ \kappa(t)\|f_\ve\stn\|_{Z_t}\big(\|f_\ve\stn\|_{Z_t}^\frac{\gamma}{2}+1\big),
\end{align}
where $f_\ve\stn(\tau,\cdot):=f_\ve(t_0+\tau,\cdot)$ and $\|\cdot\|_{Z_t}:=	\|\cdot\|_{Z_t^{(\tilde p, \tilde q)}}$.

In the following, we abbreviate 
$F(t):=\|\chi_{\{|v|\le r_0\}}f_\ve\stn\|_{Z_t^{(\tilde p, \tilde q)}}$. 
Thanks to mass control and  \ref{it:L''}, we have for any $t\in[0,1]$ and any $\tilde q\ge\tilde p\ge 1$ the estimate
\begin{align}\label{eq:fZ}
	\begin{aligned}
	\|f\stn_\ve\|_{Z_t}&\le	F(t) 
	+ \sup_{s\in[0,t]}\nu(s)^{\frac{d}{2}(\frac{1}{\tilde p}-\frac{1}{\tilde q})}\|\chi_{\{|v|>r_0\}}f_\ve^{(t_0)}(s)\|_{L^{\tilde q}(\mathbb{R}^d)}
	\\& \le	F(t) +
	\sup_{s\in[0,t]}\nu(s)^{\frac{d}{2}(\frac{1}{\tilde p}-\frac{1}{\tilde q})}(m+L'')
	\\& \le	F(t) +\CC,
	\end{aligned}
\end{align}
where $\CC=C(d)(m+L'')$.

Inequalities~\eqref{eq:step1.appl} and~\eqref{eq:fZ} show that for $(\tilde p,\tilde q)=(p,2p)$ the function $F(t)$ obeys a bound of the form
\begin{align}\label{eq:784}
	F(t)\le B
	+\kappa(t)\big(F(t)+\CC\big)\big((F(t)+\CC)^{\frac{\gamma}{2}}+1\big), \qquad t\in[0,t_1{-}t_0],
\end{align}
where $B$ only depends on fixed parameters (here one may choose $B=C_1L$).
Since $\kappa(0)=0$, there exists, for every $B>0$, a unique maximal time $T_B\in(0,1]$ 
such that 
\begin{align}
	\sup_{t\in[0,T_B]}\kappa(t)\le \frac{B}{(2B+\CC)\big((2B+\CC)^\frac{\gamma}{2}+1\big)},
\end{align}
that is $T_B:=\kappa^{-1}(\min\{\kappa(1),\frac{B}{(2B+\CC)\big((2B+\CC)^{\gamma/2}+1\big)}\})$, where $\kappa^{-1}$ denotes the inverse of~$\kappa$. 
With this choice, we deduce from~\eqref{eq:784} that $F(t)\le 2B$ for all $t\in[0,\hat T_{B}]$, where $\hat T_{B}:=\min\{T_B,t_1{-}t_0\}$. In particular, for $B=B_1:=C_1L$ we infer
\[
\|\chi_{\{|v|\le r_0\}}f_\ve(t_0+t)\|_{L^{2p}}
\le\nu(t)^{-{\frac{d}{4p}}}2B_1,\qquad t\in[0,\hat T],
\]
where $\hat T:=\hat T_{B_1}$.
Combined with~\ref{it:L''} and mass control this shows that 
\begin{align}\label{eq:702}
	\sup_{\ve\in(0,\ve_0]}\|f_\ve^{(t_0)}\|_{L^\infty([\tau,\hat T];L^{2p}(\mathbb{R}^d))}
	\le C(\tau)\quad \text{ for all }\tau\in (0,\hat T]
\end{align}
for some non-increasing function $C(\cdot)$, which depends on further fixed parameters.
This argument can be iterated to give the asserted bound~\eqref{eq:210} 
for the same time $\hat T\ (=\hat T_{B_1})$.
Let us provide some details.
Fix some $N\in \mathbb{N}_+$ large enough such that 
$2^Np>d$.
For $\tau>0$ small and $(\tilde p,\tilde q)=(2^ip,2^{i+1}p)$, $i=1$, the (time-shifted) function 
$F(t):=\|\chi_{\{|v|\le r_0\}}f_\ve^{(t_0+\tau)}\|_{Z_t}$ obeys a bound of the form 
\begin{align}\label{eq:701}
	F(t)\le B
	+\kappa(t)\big(F(t)+\CC\big)\big((F(t)+\CC)^{\frac{\gamma}{2}}+1\big), \qquad t\in[0,t_1{-}(t_0{+}\tau)]
\end{align}
where $B=B(\tau)<\infty$ is non-increasing in $\tau>0$.
This allows us to infer (for $i=1$) that
\begin{align}\label{eq:703}
	\sup_{\ve\in(0,\ve_0]}\|f_\ve^{(t_0)}\|_{L^\infty([\tau,\hat T];L^{2^{i+1}p}(\mathbb{R}^d))}
	\le C(\tau)\quad \text{ for all }\tau\in (0,\hat T]
\end{align}
for a non-increasing function $C(\cdot)$. Observe that, thanks to the non-increase with respect to $\tau$ of the constants $C(\cdot)$ and $B$ appearing in~\eqref{eq:702} and~\eqref{eq:701}, the locally uniform bound~\eqref{eq:703} can indeed be achieved on the entire time interval $(0,\hat T]$ (by iteration), so that the final time $\hat T$ does not need to be decreased.
Repeating the argument for $i=2,\dots,N{-}1$, we deduce a bound of the form
$\sup_{\ve\in(0,\ve_0]}\|f_\ve^{(t_0)}\|_{L^\infty([\tau,\hat T];L^{2^Np}(\mathbb{R}^d))}
\le C(\tau)$ for all $\tau\in (0,\hat T].$
For $\tau\in(0,\hat T)$ and $f\stn_\ve$ replaced by $f_\ve^{(t_0+\tau)}$ we may now take $\tilde p:=2^Np>\max\{d,p_c\}$ in Step~1, in which case 
the choice $\tilde q=\infty$ is admissible. (Indeed, with this choice we have 
$b=\frac{d}{2}(\frac{\gamma}{2}\frac{1}{\tilde p}+\frac{1}{\tilde p})< \frac{d}{2}(\frac{1}{d}+\frac{1}{d})=1$, so that~\eqref{eq:b.ineq} is fulfilled.) 
Arguing similarly as before we infer~\eqref{eq:210}.
\end{proof}

\subsection{Space profile}\label{ssec:profile.proof}

Finally, we are in a position to prove Theorem~\ref{thm:profile.r}.

\begin{proof}[Proof of Theorem~\ref{thm:profile.r}]
	Thanks to the short-time regularity for~\eqref{eq:fpn}, we may assume without loss of generality the strengthened version~\ref{eq:hpinit} of~\ref{it:HP.init}.
We fix some $\ul\alpha<\alpha_c:=\frac{2}{\gamma}$ as in Proposition~\ref{prop:profile.lb} and let $r_*>0$ denote the associated radius obtained in Proposition~\ref{prop:profile.lb}. 
Let now $\hat t>0$.  We assert that the behaviour of $g(\hat t,\cdot)$ near zero is determined by the fact of whether or not the hypotheses of \textit{Case~1} are fulfilled, where Case~1 is determined as follows:

\medskip

 \underline{Case 1:} there exists $\alpha\in[\ul\alpha,\alpha_c)$, a time $\hat t_0<\hat t $, a radius $r_0\in(0,r_*)$,  and $\ve_0\in(0,\epsilon_*]$ such that for all $\ve\in(0,\ve_0]$, all $r\in(0,r_0]$ and all $t\in[\hat t_0,\hat t ]$
		\begin{align}\label{eq:511}
			g_\ve(t,r)\le \tilde g^{(\alpha)}(r):=c_\gamma r^{-\alpha},\;\text{ where } c_\gamma:=\big(\tfrac{2}{\gamma}\big)^\frac{1}{\gamma}.
		\end{align}
	Here, $\{g_\ve\}$ denotes the family of isotropic approximate solutions in radial coordinates.
	
If Case~1 is fulfilled, Proposition~\ref{prop:ins.subcr} implies the existence of a constant $\delta>0$ such that
\begin{align}\label{eq:785}
	\sup_{\ve\in(0,\ve_0]}\sup_{t\in (\hat t-\delta,\hat t]}	\|f_\ve(t,\cdot)\|_{L^\infty}<\infty.
\end{align}
Indeed, since $\alpha<\alpha_c$, we can choose $p>p_c$ such that 
$f^{(\alpha)}(v):=c_\gamma|v|^{-\alpha}\in L^p(B_1)$, where $B_1:=\{v:|v|\le 1\}$.
Hence, combining~\eqref{eq:511} with mass conservation and the uniform bound away from the origin (cf.\ Lemma~\ref{l:boundiso}), 
we find that
\begin{align}
	\sup_{\ve\in(0,\ve_0]}	\|f_\ve(t,\cdot)\|_{L^p}\le L
\end{align}
for all $t\in[\hat t_0,\hat t]$ and some finite constant $L$. 
Property~\eqref{eq:511} further guarantees the bound 
$\sup_{\ve\in(0,\ve_0]} f_\ve(t,v)\le L'|v|^{-\frac{2}{\gamma}}$ for all $v\in B_{r_0}$, all $t\in[\hat t_0,\hat t ]$ and suitable $L'<\infty$. Finally, Lemma~\ref{l:boundiso} ensures that for all $\tilde r_0>0$ there exists $L''(\tilde r_0)<\infty$ such that $\sup_{t>0}\sup_{\ve\in(0,\ve_0]} f_\ve(t,v)\le L''(\tilde r_0)$ whenever $|v|\ge \tilde r_0$.
Hence, for every $t_0\in[\hat t_0,\hat t)\cap[\hat t{-}1,\hat t)$ and for $p$ as above the conditions~\ref{it:L}--\ref{it:L''} of Proposition~\ref{prop:ins.subcr} are satisfied with $t_1=\hat t$, which implies~\eqref{eq:785} for suitable $\delta>0$.

It is easy to see that, after possibly decreasing $\delta>0$,  the bound~\eqref{eq:785} even holds with $\sup_t$ being taken over $t\in J_{\hat t}:=(\hat t-\delta,\hat t+\delta)$. (To this end, one may adapt the estimates in the proof of Proposition~\ref{prop:ins.subcr} and choose $p=\infty$ in~\ref{it:L}. In this case, the proof greatly simplifies,
condition~\ref{it:L'} is not needed and one may deduce an estimate of the form~\eqref{eq:210} even with $\tau=0$.)
Given this uniform bound, we can argue classically as in the proof of Proposition~\ref{prop:lim}~\ref{it:approxProp} to infer the smoothness of the limiting density $f$ on $J_{\hat t}\times\mathbb{R}^d\supset J_{\hat t}\times B_1$.

\medskip

\underline{Case 2:} it remains to consider the situation where the hypotheses of Case~1 are not satisfied.
In this case, we can find sequences 
\begin{align}
	\alpha_j\uparrow \alpha_c,\qquad t_j\uparrow \hat t ,\qquad r_j\downarrow 0,\qquad  \ve_j\downarrow 0,
\end{align}
with  $\ul\alpha\le\alpha_j$ and $r_j<r_*$ for all $j$, in such a way that 
\begin{align}
	g_{\ve_j}(t_j,r_j)\ge \tilde g^{(\alpha_j)}(r_j)\qquad\text{ for all }j\in \mathbb{N}.
\end{align}
Thus, invoking Proposition~\ref{prop:profile.lb}, we infer
\begin{align}\label{eq:122}
	g_{\ve_j}(t_j,r)\ge \tilde g^{(\alpha_j)}(r)-Cr^{2-d}\qquad\text{ for all }r\in(r_j,r_*).
\end{align}
By construction,  $\lim_{j\to\infty}g_{\ve_j}(t_j,r)=g(\hat t ,r)$ for every $r>0$.
Hence, sending $j\to\infty$ in inequality~\eqref{eq:122} yields
\begin{align}
	g(\hat t ,r)\ge  \tilde g^{(\alpha_c)}(r)-Cr^{2-d}\qquad\text{ for all }r\in(0,r_*),
\end{align}
which implies that
\begin{align}\label{eq:123}
	g(\hat t ,r)\ge  g_c(r)-Cr^{2-d}\qquad\text{ for all }r\in(0,r_*).
\end{align}
In view of the upper bound in Proposition~\ref{prop:upperBd} this completes the proof of the main assertion in Theorem~\ref{thm:profile.r}.

Let now $\alpha=\alpha_c$ in Proposition~\ref{prop:profile.lb} and define $\tilde r_\ve$ correspondingly. If $\hat t$ is such that $\mu_{\hat t}(\{0\})>0$, we must have $\lim_{\ve\to0}\tilde r_\ve(\hat t)=0$.
Proposition~\ref{prop:profile.lb} (combined with Prop.~\ref{prop:upperBd}) thus implies the assertion concerning this case.
\end{proof}

\section{Renormalised form}\label{sec:renorm}

\subsection{Variational structure}

Our subsequent analysis  relies on the following gradient-flow structure of
the regularised Fokker--Planck equation~\eqref{eq:fpnreg}. Such a structure has previously been used in~\cite[Section~3.3]{CHR_2020} for the proof  of an energy dissipation identity.
To proceed, let us recall that $\Phi'(s)=-\int_s^\infty\frac{1}{h(\sigma)}\,\dd\sigma$ and $\Phi(0)=0$.

We define the approximate free energy functional by
\begin{align}
	\Entr_\varepsilon(f) =\int_{\mathbb{R}^d}\left(\frac{|v|^2}{2}f+ \Phi_\varepsilon(f)\right)\mathrm{d}v,
\end{align}
where $\Phi_\varepsilon\in C([0,\infty))\cap C^\infty((0,\infty))$ satisfies
\begin{align}\label{eq:588}
	\Phi_\varepsilon(s) = \Phi(s)\text{ for }s\in[0,\ve^{-1}]
\end{align}
and
\begin{align}\label{eq:589}
	\Phi_\varepsilon''=\frac{1}{h_\ve}, \quad \Phi_\ve\ge\Phi.
\end{align}
The function $\Phi_\varepsilon$ with the above properties can be obtained by setting
	$\Phi_\ve(s)=\int_0^s\Phi_\ve'(\sigma)\,\dd\sigma,$
where $\Phi_\ve'(s)$ is given by 
\begin{align*}
	\Phi_\ve'(s) = -\int_s^{B_\ve}\frac{1}{h_\ve(\sigma)}\,\dd\sigma
\end{align*}
with the constant $B_\ve>\frac{1}{\ve}$ being such that
\begin{align*}
	\int_\frac{1}{\ve}^{B_\ve}\frac{1}{h_\ve(\sigma)}\,\dd\sigma = \int_\frac{1}{\ve}^\infty\frac{1}{h(\sigma)}\,\dd\sigma.
\end{align*}
Identity~\eqref{eq:588} is a consequence of the fact that $h_\ve(s)=h(s)$  in $[0,\ve^{-1}]$, while the second property in~\eqref{eq:589} follows from the inequality $h_\ve\le h$.

 Notice that the functional derivative of $\mathcal{H}_\ve$ is given by 
$\delta\mathcal{ H}_\ve(f)=\frac{1}{2}|v|^2+\Phi_\ve'(f)$, 
which allows us to rewrite~\eqref{eq:fpnreg} as
\begin{align}
	\partial_tf_\ve = \divv\big(h_\ve(f_\ve)\nabla \delta \Entr_\ve(f_\ve)\big).
\end{align}

\begin{lemma}[Energy dissipation balance for~\texorpdfstring{\eqref{eq:fpnreg}}{}]\label{l:edi.reg}
Under the hypotheses of Proposition~\ref{prop:ex.reg}, 
	the solutions $f_\ve$ of~\eqref{eq:fpnreg} obtained therein satisfy for all $0\le s\le t <\infty$
	\begin{align}\label{eq:290}
		\Entr_\ve(f_\ve(t)) +\int_s^t\int_{\mathbb{R}^d}\frac{1}{h_\ve(f_\ve)}|\nabla f_\ve+vh_\ve(f_\ve)|^2\dd v\,\dd\tau = \Entr_\ve(f_\ve(s)).
	\end{align}
\end{lemma}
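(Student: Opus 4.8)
The proof amounts to making rigorous the formal energy balance built into the gradient‑flow form of~\eqref{eq:fpnreg}. Since $\Phi_\ve''=1/h_\ve$ (cf.~\eqref{eq:589}) we have $\nabla\Phi_\ve'(f_\ve)=h_\ve(f_\ve)^{-1}\nabla f_\ve$, hence $\nabla f_\ve+vh_\ve(f_\ve)=h_\ve(f_\ve)\nabla\delta\Entr_\ve(f_\ve)$ with $\delta\Entr_\ve(f_\ve)=\tfrac12|v|^2+\Phi_\ve'(f_\ve)$, so that at least formally
\[
\frac{\dd}{\dd\tau}\Entr_\ve(f_\ve(\tau))
=\int_{\mathbb{R}^d}\delta\Entr_\ve(f_\ve)\,\partial_\tau f_\ve\,\dd v
=-\int_{\mathbb{R}^d}\frac{|\nabla f_\ve+vh_\ve(f_\ve)|^2}{h_\ve(f_\ve)}\,\dd v ,
\]
and integrating over $\tau\in[s,t]$ gives~\eqref{eq:290}. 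The plan is to justify this using only the properties of $f_\ve$ established so far: by Lemma~\ref{l:locex.mild} and Proposition~\ref{prop:ex.reg}, $f_\ve$ is a strictly positive classical solution of~\eqref{eq:fpnreg} in $(0,\infty)\times\mathbb{R}^d$, it belongs to $C([0,\infty);\XX)$, and it obeys the uniform moment bound~\eqref{eq:204} with $n\ge\ell+d+1\ge3$, i.e.\ $\sup_{\tau\ge0}\int_{\mathbb{R}^d}(1+|v|^n)f_\ve(\tau)\,\dd v<\infty$.

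As a preliminary step I would check that $\Entr_\ve(f_\ve(\tau))$ is finite for every $\tau\ge0$ and that $\tau\mapsto\Entr_\ve(f_\ve(\tau))$ is continuous on $[0,\infty)$. Finiteness of the quadratic part follows from~\eqref{eq:204} since $n>2$. For the $\Phi_\ve$‑part one uses $\Phi_\ve(0)=0$ together with the elementary bound $|\Phi_\ve(s)|+s|\Phi_\ve'(s)|\le C_{\ve,\delta}\,s^{1-\delta}$ for $s\in(0,1]$ (recall $|\Phi_\ve'(s)|\lesssim_\ve1+|\log s|$ as $s\downarrow0$), while on $\{f_\ve(\tau)\ge1\}$, a set of measure $\le m$, $\Phi_\ve(f_\ve(\tau))$ is bounded; since $f_\ve\in L^\infty\cap L^1_n$, Hölder's inequality gives $f_\ve(\tau)^{1-\delta}\in L^1(\mathbb{R}^d)$ with a bound uniform in $\tau$ once $\delta<n/(n+d)$. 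Continuity in $\tau$ then follows from $f_\ve\in C([0,\infty);\XX)$ (which gives convergence $f_\ve(\tau)\to f_\ve(\tau_0)$ in $L^1\cap L^\infty$ and boundedness in $L^1_n$), combined with these same tail estimates, via Vitali's convergence theorem. In particular $\Entr_\ve(f_\ve(0))=\Entr_\ve(\fin)<\infty$.

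For $0<s\le t$ I would run the localised computation. Fix $\vp\in C^\infty_c(\mathbb{R}^d)$ with $0\le\vp\le1$ and $\vp\equiv1$ on $\{|v|\le1\}$, set $\vp_R(v):=\vp(v/R)$, and use the smoothness and strict positivity of $f_\ve$ on $(0,\infty)\times\mathbb{R}^d$ to differentiate $\tau\mapsto\int_{\mathbb{R}^d}\vp_R\bigl(\tfrac12|v|^2f_\ve+\Phi_\ve(f_\ve)\bigr)\dd v$ under the integral and integrate by parts (no boundary term, $\vp_R$ being compactly supported), obtaining
\[
\frac{\dd}{\dd\tau}\int_{\mathbb{R}^d}\vp_R\Bigl(\tfrac12|v|^2f_\ve+\Phi_\ve(f_\ve)\Bigr)\dd v
=-\int_{\mathbb{R}^d}\vp_R\,\frac{|\nabla f_\ve+vh_\ve(f_\ve)|^2}{h_\ve(f_\ve)}\,\dd v-\mathcal{E}_R(\tau),
\]
where $\mathcal{E}_R(\tau):=\int_{\mathbb{R}^d}(\nabla f_\ve+vh_\ve(f_\ve))\cdot\nabla\vp_R\,\bigl(\tfrac12|v|^2+\Phi_\ve'(f_\ve)\bigr)\dd v$. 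Integrating over $[s,t]$ and letting $R\to\infty$: the left‑hand side converges to $\Entr_\ve(f_\ve(t))-\Entr_\ve(f_\ve(s))$ by dominated convergence (the energy density is integrable by the previous step), $\int_s^t\!\int\vp_R(\dots)$ converges to $\int_s^t\!\int(\dots)$ by monotone convergence (nonnegative integrand), so everything reduces to showing $\int_s^t\mathcal{E}_R(\tau)\,\dd\tau\to0$.

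To this end I would split $\mathcal{E}_R$ into the four terms obtained from $\{\nabla f_\ve,\ vh_\ve(f_\ve)\}$ and $\{\tfrac12|v|^2,\ \Phi_\ve'(f_\ve)\}$. The two terms carrying $\nabla f_\ve$ are integrated by parts a second time, via $\int\nabla f_\ve\cdot\nabla\vp_R\,\tfrac12|v|^2=-\int f_\ve\,\divv(\tfrac12|v|^2\nabla\vp_R)$ and $\int\nabla f_\ve\cdot\nabla\vp_R\,\Phi_\ve'(f_\ve)=\int\nabla[\Phi_\ve(f_\ve)]\cdot\nabla\vp_R=-\int\Phi_\ve(f_\ve)\Delta\vp_R$; since $|\divv(|v|^2\nabla\vp_R)|\le C\chi_{\{R\le|v|\le2R\}}$ and $|\Delta\vp_R|\le CR^{-2}\chi_{\{R\le|v|\le2R\}}$, they are bounded by $C\int_{\{R\le|v|\le2R\}}(f_\ve+\Phi_\ve(f_\ve))\,\dd v\to0$. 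For the two terms carrying $vh_\ve(f_\ve)$, I would use that $\eta_\ve$ is bounded, so $h_\ve(s)\le C_\ve s$, and that $|v|/R\le2$ on the annulus $\{R\le|v|\le2R\}$, to bound them by $C_\ve\bigl(R^2\!\int_{\{|v|\ge R\}}|v|f_\ve\,\dd v+\int_{\{R\le|v|\le2R\}}f_\ve|\Phi_\ve'(f_\ve)|\,\dd v\bigr)$; here $R^2|v|\le|v|^n$ on $\{|v|\ge R\ge1\}$ as $n\ge3$, so the first integral is $\le\int_{\{|v|\ge R\}}|v|^nf_\ve\to0$, while $f_\ve|\Phi_\ve'(f_\ve)|\le C_{\ve,\delta}f_\ve^{1-\delta}\in L^1$ (on $\{|v|\ge R\}$ one has $f_\ve<1$ for $R$ large) makes the second vanish too. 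All these bounds are uniform in $\tau\in[s,t]$ and in $R$, so $\int_s^t\mathcal{E}_R\to0$ by dominated convergence; this proves~\eqref{eq:290} for $0<s\le t$. Extending to $s=0$ is then immediate: letting $s\downarrow0$, the left‑hand side converges by continuity of the energy at $\tau=0$, the dissipation integral converges by monotone convergence, and one reads off $\int_0^t<\infty$ a posteriori. The main obstacle is precisely this $R\to\infty$ control of $\mathcal{E}_R$ — the second integration by parts on the $\nabla f_\ve$‑terms, which sidesteps the need for any weighted decay estimate on $\nabla f_\ve$, and the treatment of the logarithmically unbounded factor $\Phi_\ve'(f_\ve)$ through the interpolation inequality $s|\Phi_\ve'(s)|\lesssim_\delta s^{1-\delta}$ combined with the moment bound~\eqref{eq:204}.
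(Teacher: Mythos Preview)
Your argument is correct and follows the same route the paper indicates: the formal gradient-flow identity is justified by localising with a compactly supported cutoff $\vp_R$ and showing that the boundary term $\mathcal{E}_R$ vanishes as $R\to\infty$ thanks to moment control. The paper's own proof is only a two-line sketch (smoothness of $f_\ve$ reduces the task to controlling the tails, which follows from the moment bound by classical arguments, with a reference to~\cite{CLR_2009}); your write-up supplies precisely those details. One minor remark: in your estimate for the term $\int v h_\ve(f_\ve)\cdot\nabla\vp_R\cdot\tfrac12|v|^2$ the factor should come out as $R$ rather than $R^2$ (since $|v|^3 R^{-1}\le 2|v|^2\le 4R|v|$ on the annulus), but this is harmless because your subsequent bound $R^2|v|\le|v|^n$ for $|v|\ge R\ge1$, $n\ge3$, is valid either way.
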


\begin{proof} 
Recall that $f_\ve\in C^{1,2}((0,\infty)\times\mathbb{R}^d)$ is a classical solution of~\eqref{eq:fpnreg}.
Hence, the only task in deriving equation~\eqref{eq:290} lies in appropriately controlling the tails as $|v|\to\infty$. 
This is a consequence of the moment control of the bounded function $f_\ve$ and follows from classical arguments, see e.g.~\cite{CLR_2009}.  
\end{proof}

\begin{lemma}\label{l:edinH}
	Suppose~\ref{it:HPspc},~\ref{it:HP.init} and use the notations in Proposition~\ref{prop:lim}.
	For any $t\ge0$
	\begin{align*}
		\liminf_{\ve\to0}	\Entr_\ve(f_\ve(t)) \ge  	\Entr(f(t)),
	\end{align*}
where the $\liminf$ is taken along the sequence $\ve\downarrow0$ selected in Proposition~\ref{prop:lim}~\ref{it:approxProp}.
\end{lemma}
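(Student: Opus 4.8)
The plan is to exploit the lower semicontinuity of the convex integrand $\Phi$ along the a.e.\ convergence $f_\ve(t)\to f(t)$ from Proposition~\ref{prop:lim}~\ref{it:approxProp}, the only subtlety being that $\Phi$ is everywhere negative and, when $\gamma=1$, unbounded below, so that neither Fatou's lemma nor the usual weak-$*$ lower semicontinuity of measure functionals applies directly on the infinite-measure domain $\dom$. For $t=0$ the claim is trivial: since $\fin\in L^\infty$, we have $\Phi_\ve(\fin)=\Phi(\fin)$ as soon as $\ve^{-1}\ge\|\fin\|_{L^\infty}$, hence $\Entr_\ve(\fin)=\Entr(\fin)=\Entr(f(0))$ for all small $\ve$. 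So fix $t>0$. First I would write $\Entr_\ve(f_\ve(t))=\int_{\dom}\tfrac{|v|^2}{2}f_\ve(t)\,\dd v+\int_{\dom}\Phi_\ve(f_\ve(t))\,\dd v$ and, since $\Phi_\ve\ge\Phi$ by~\eqref{eq:589}, estimate downwards to reduce the statement to showing $\liminf_{\ve\downarrow0}\big(\int_{\dom}\tfrac{|v|^2}{2}f_\ve(t)\,\dd v+\int_{\dom}\Phi(f_\ve(t))\,\dd v\big)\ge\Entr(f(t))$.

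The technical heart of the argument is an elementary two-sided sublinear bound on $\Phi$. Using $\Phi(0)=0$, $\Phi'(\sigma)=\tfrac1\gamma\log\tfrac{\sigma^\gamma}{1+\sigma^\gamma}$, the estimate $\log(1+\sigma^{-\gamma})\le\gamma|\log\sigma|+\log2$ on $(0,1]$ and $\log(1+\sigma^{-\gamma})\le\sigma^{-\gamma}\le\sigma^{-1}$ on $[1,\infty)$, one obtains a constant $C_\gamma<\infty$ (depending only on $\gamma$) such that $|\Phi(s)|\le C_\gamma\sqrt{s}$ for all $s\ge0$; in particular $\Phi_\ve(s)\ge\Phi(s)\ge-C_\gamma\sqrt{s}$. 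The exponent $\tfrac12$ is chosen so that $\{\sqrt{f_\ve(t)}\}_{\ve}$ is uniformly integrable on $\dom$: by Lemma~\ref{l:moments} one has $\sup_\ve\int_{\dom}f_\ve(t)(1+|v|^n)\,\dd v\le C\|\fin\|_{L^1_n}$ with $n$ as in~\ref{it:HP.init} (so $n>d$ and $n\ge2$), whence Cauchy--Schwarz gives, for every measurable $E\subset\dom$, $\int_E\sqrt{f_\ve(t)}\,\dd v\le(\int_E f_\ve(t)(1+|v|^n)\,\dd v)^{1/2}(\int_E(1+|v|^n)^{-1}\,\dd v)^{1/2}$; taking $E=\dom$ bounds the $L^1$-norm uniformly, taking $E=\{|v|>R\}$ gives tightness, and splitting $E$ according to whether $f_\ve(t)$ exceeds a level $M$ (and using mass conservation $\int f_\ve(t)\le m$) gives equi-integrability. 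Since $f_\ve(t,v)\to f(t,v)$ for a.e.\ $v$ by Proposition~\ref{prop:lim}~\ref{it:approxProp}, Vitali's theorem then yields $\sqrt{f_\ve(t)}\to\sqrt{f(t)}$ in $L^1(\dom)$, in particular $\int_{\dom}\sqrt{f_\ve(t)}\,\dd v\to\int_{\dom}\sqrt{f(t)}\,\dd v$; the same Cauchy--Schwarz bound together with Fatou applied to the moment inequality shows $\sqrt{f(t)}\in L^1(\dom)$, hence $\Phi(f(t))\in L^1(\dom)$ and $\Entr(f(t))$ is finite.

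To finish I would apply Fatou's lemma once, to the nonnegative function $G_\ve:=\tfrac{|v|^2}{2}f_\ve(t)+\Phi(f_\ve(t))+C_\gamma\sqrt{f_\ve(t)}\ge0$, which converges a.e.\ to $G:=\tfrac{|v|^2}{2}f(t)+\Phi(f(t))+C_\gamma\sqrt{f(t)}$. From $\Phi_\ve\ge\Phi$ we get $\Entr_\ve(f_\ve(t))\ge\int_{\dom}G_\ve\,\dd v-C_\gamma\int_{\dom}\sqrt{f_\ve(t)}\,\dd v$; passing to $\liminf_{\ve\downarrow0}$, using Fatou on the first integral, the convergence of the second integral just established, and $\Entr(f(t))=\int_{\dom}G\,\dd v-C_\gamma\int_{\dom}\sqrt{f(t)}\,\dd v$, yields $\liminf_{\ve\downarrow0}\Entr_\ve(f_\ve(t))\ge\Entr(f(t))$. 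The one genuinely delicate point is the middle paragraph: plain boundedness below of $\Phi$ fails for $\gamma=1$, and an affine minorant of $\Phi$ is useless because $\dom$ has infinite measure, so it is precisely the combination of $\Phi(0)=0$ (sublinearity near the origin) with the propagated higher moment bound of Lemma~\ref{l:moments} that makes the tails controllable uniformly in $\ve$.
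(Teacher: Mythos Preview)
Your proof is correct and takes a genuinely different route from the paper's. The paper splits $\Entr_\ve$ into kinetic and internal parts and treats them separately: for the kinetic energy it proves actual convergence $\int\tfrac{|v|^2}{2}f_\ve(t)\to\int\tfrac{|v|^2}{2}f(t)$ via an annulus cut-off and the $L^1_3$ moment bound; for $\int\Phi(f_\ve)$ it uses the sublinearity of $\Phi$ at infinity in the form $|\Phi(s)|\le\delta s$ for $s\ge L(\delta)$, splits according to $\{f_\ve\le L\}$ versus $\{f_\ve>L\}$, invokes dominated convergence on the first piece, and then sends $L\to\infty$. You instead keep everything together and use the global estimate $|\Phi(s)|\le C_\gamma\sqrt s$, establish uniform integrability of $\{\sqrt{f_\ve(t)}\}_\ve$ on the infinite-measure domain by Cauchy--Schwarz against the weight $(1+|v|^n)^{-1}\in L^1(\dom)$ and the propagated moment bound of Lemma~\ref{l:moments}, and then apply Fatou once to the shifted nonnegative integrand $G_\ve$.

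Your approach is more explicit about the one genuinely delicate issue (tail control on an infinite-measure domain): the paper's ``dominated convergence'' for $\int\Phi(f_\ve)\chi_{\{f_\ve\le L\}}$ still implicitly needs a tightness argument of the same flavour, since the obvious pointwise bound $\sup_{[0,L]}|\Phi|$ is not in $L^1(\dom)$. Your argument also yields finiteness of $\Entr(f(t))$ for free. On the other hand, the paper's treatment of the kinetic term gives the stronger conclusion that $\int|v|^2f_\ve(t)\to\int|v|^2f(t)$ (equality rather than just $\liminf\ge$), which can be useful elsewhere. One minor simplification: your Cauchy--Schwarz bound already gives $\int_E\sqrt{f_\ve(t)}\le C\big(\int_E(1+|v|^n)^{-1}\big)^{1/2}$, which directly yields both tightness (take $E=\{|v|>R\}$) and equi-integrability (take $|E|$ small), so the additional split into $\{f_\ve>M\}$ and $\{f_\ve\le M\}$ is not needed.
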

\begin{proof}
	Since $\Phi_\ve\ge\Phi$ (cf.~\eqref{eq:589}), we have
	$\int\Phi_\ve(f_\ve(t))\ge 	\int\Phi(f_\ve(t))$.
	Next, given $\delta>0$, we let $L=L(\delta)>0$ be large enough such that $|\Phi(s)|\le\delta s$ for $s\ge L$. Then
	\begin{align*}
		\int\Phi(f_\ve(t,v))\,\dd v &\ge 	\int\Phi(f_\ve(t,v))\chi_{\{f_\ve\le L\}}\,\dd v +	\int\Phi(f_\ve(t,v))\chi_{\{f_\ve> L\}}\,\dd v 
		\\&\ge 	\int\Phi(f_\ve(t,v))\chi_{\{f_\ve\le L\}}\,\dd v -\delta m,
	\end{align*}
where $m=\int\fin$.
	Sending first $\ve\to0$ (using dominated convergence) and then $L\to\infty$, we infer 
		$\liminf_{\ve\to0}\int\Phi_\ve(f_\ve(t)) \ge 	\int\Phi(f(t))-\delta m$ and hence
	\begin{align*}
		\liminf_{\ve\to0}\int\Phi_\ve(f_\ve(t))&\ge 	\int\Phi(f(t)).
	\end{align*}

For the kinetic part, we let $\mathcal{A}_{\rho}:=\{\rho\le |v|\le \rho^{-1}\}$ for $0<\rho\ll1$
and estimate
\begin{align}
\left|\int_{\mathbb{R}^d}\tfrac{1}{2}|v|^2f_\ve(t)
-\int_{\mathbb{R}^d}\tfrac{1}{2}|v|^2f(t)\right|
&\le 
	\left|\int_{\mathcal{A}_{\rho}} \tfrac{1}{2}|v|^2(f_\ve(t)-f(t))\,\dd v \right|
	+\rho^2m+ C\rho\|\fin\|_{L^1_3},
\end{align}
where we used mass conservation and the  bound $\|f(t)\|_{L^1_3},$ $\|f_\ve(t)\|_{L^1_3}\le C\|\fin\|_{L^1_3}<\infty$ (cf.\ L.~\ref{l:moments}).
We deduce
$\lim_{\ve\to0}\int_\dom \frac{1}{2}|v|^2f_\ve(t,v)\,\dd v = \int_\dom\frac{1}{2} |v|^2f(t,v)\,\dd v$, where we used
the locally uniform convergence in Prop.~\ref{prop:lim}~\ref{it:approxProp}
 and the fact that $0<\rho\ll1$ can be taken arbitrarily small.
\end{proof}

\subsection{The limiting measure is a renormalised solution}\label{ssec:renorm}

\begin{proof}[Proof of Theorem~\ref{thm:renorm}] 
The weak-star continuity of the mass-conserving curve $t\mapsto\mu_t$ in $\mathcal{M}_+(\mathbb{R}^d)$ has already been established in Proposition~\ref{prop:lim}.
	
	We next show that $\mathcal{T}_k(f)=\min\{f,k\}$ has a weak derivative $\nabla\mathcal{T}_k(f)\in L^2_\loc([0,\infty)\times\mathbb{R}^d)$.
For this purpose, we choose $s=0$ and $t=T$ in estimate~\eqref{eq:290} and, letting $\epsilon_*>0$ be small enough so that, by Lemma~\ref{l:edinH}, $-\Entr_\ve(f_\ve(T))\le -\Entr(f(T))+1$ for all $\ve\in(0,\epsilon_*]$, we infer the $\ve$-uniform bound
\begin{align}\label{eq:289}
	\int_0^T\!\!\int_{\mathbb{R}^d}\frac{1}{h_\ve(f_\ve)}|\nabla f_\ve+vh_\ve(f_\ve)|^2\dd v\,\dd\tau \le \Entr(\fin)-\Entr(f(T))+1.
\end{align}
To deduce a bound on $\nabla\mathcal{T}_k(f_\ve)$, we note that
\begin{align}
	|\nabla \mathcal{T}_k(f_\ve)|^2
	\le 2|\mathcal{T}_k'(f_\ve)[\nabla f_\ve+vh_\ve(f_\ve)]|^2
	+ 2|\mathcal{T}_k'(f_\ve)vh_\ve(f_\ve)|^2.
\end{align}
Hence, using the fact that $|\mathcal{T}_k'|\le1$ and $\mathcal{T}_k'(s)=0$ for $s>k$, we deduce from~\eqref{eq:289} for any $R\in(0,\infty)$
\begin{equation}\label{eq:585}
	\int_0^T\!\!\int_{\{|v|\le R\}}|\nabla \mathcal{T}_k(f_\ve)|^2\,\dd v\dd t
	\le C(k)(\Entr(\fin)-\Entr(f(T))+1)+C(k,R)T.
\end{equation}
Thanks to the convergence in Proposition~\ref{prop:lim}~\ref{it:approxProp}
\begin{align}
	&	\mathcal{T}_k(f_\ve)\to \mathcal{T}_k(f) \quad  \text{ a.e.\ in }[0,\infty)\times\mathbb{R}^d,
	\\&	\mathcal{T}_k(f_\ve)\to \mathcal{T}_k(f) \quad  \text{ in }L^p_\loc([0,\infty)\times\mathbb{R}^d), \quad\text{for all }p\in[1,\infty),
	\\\text{and  thus, by }\eqref{eq:585}, \qquad&\nabla \mathcal{T}_k(f_\ve)\weak\nabla \mathcal{T}_k(f)\quad\text{ in }L^2_\loc([0,\infty)\times \mathbb{R}^d).\label{eq:584}
\end{align}
As a consequence,
\begin{equation}\label{eq:585'}
	\int_0^T\!\!\int_{\{|v|\le R\}}|\nabla \mathcal{T}_k(f)|^2\,\dd v\dd t
	\le C(k)(\Entr(\fin)-\Entr(f(T))+1)+C(k,R)T.
\end{equation}
If the gradients $\nabla \mathcal{T}_k(f_\ve)$ were known to converge strongly in $L^2_\loc$, the renormalised formulation~\eqref{eq:renorm} could easily be derived from that for $f_\ve$ in the limit $\ve\to0$. For general anisotropic solutions such a result is, however, not available at the moment. The proof of~\eqref{eq:renorm} presented below in the isotropic case uses a somewhat different argument that will be taken up when deriving the entropy balance law~\eqref{eq:edb}.

Let now $\xi\in C^\infty([0,\infty))$ have a compactly supported derivative $\xi'$,
let $T<\infty$ and let $\psi\in C^\infty_c([0,T]\times\dom)$. 
Further let $\vp\in C^\infty([0,\infty);[0,1])$ satisfy $\vp(r)=0$ for $r\in[0,1]$ and $\vp(r)=1$ for $r\ge 2$, and abbreviate $\vp_\rho(r)=\vp(r/\rho)$ for $\rho\in(0,1]$. Then, since $f$ is a classical solution of~\eqref{eq:fpn} in $(0,\infty)\times\big(\mathbb{R}^d\setminus\{0\}\big)$,
a direct calculation gives
\begin{align}\label{eq:298.}
	\begin{aligned}
		\int_\dom \xi(f(T,\cdot))&\psi(T,\cdot)\vp_\rho(|v|)\,\dd v-\int_\dom \xi(\fin)\psi(0,\cdot)\vp_\rho(|v|)\,\dd v
		-\int_0^T\!\!\int_\dom \xi(f)\partial_t\psi\vp_\rho(|v|)\,\dd v\dd t
		\\&=-\int_0^T\!\!\int_\dom (\nabla f+h(f)v)\cdot [\xi''(f)\nabla f\psi\vp_\rho(|v|)+\xi'(f)\nabla\psi \vp_\rho(|v|)]\,\dd v\dd t
		\\&\quad -\int_0^T\!\!\int_\dom (\nabla f+h(f)v)\cdot [\xi'(f)\psi \vp_\rho'(|v|)\cdot\tfrac{v}{|v|} ]\,\dd v\dd t.
	\end{aligned}
\end{align}
By the dominated convergence theorem and since $\vp_\rho(r)\overset{\rho\downarrow0}{\to}1$ for all $r>0$, the LHS of~\eqref{eq:298.} converges, as $\rho\to0$, to 
\begin{align}\label{eq:298.1}
	\begin{aligned}
	\int_\dom \xi(f(T,\cdot))\psi(T,\cdot)\,\dd v-\int_\dom \xi(\fin)\psi(0,\cdot)\,\dd v
		-\int_0^T\!\!\int_\dom \xi(f)\partial_t\psi\,\dd v\dd t.
	\end{aligned}
\end{align}
Likewise, thanks to the bound~\eqref{eq:585'} and the compact support of $\xi'',\xi'$ and of $\psi$, the dominated convergence theorem allows to pass to the limit in the first integral on the RHS of~\eqref{eq:298.} giving the term
\begin{align}\label{eq:298.2}
	\begin{aligned}
	-\int_0^T\!\!\int_\dom (\nabla f+h(f)v)\cdot [\xi''(f)\nabla f\psi+\xi'(f)\nabla\psi ]\,\dd v\dd t.
	\end{aligned}
\end{align}
We are left to show that the last integral in~\eqref{eq:298.} vanishes in the limit $\rho\downarrow0$.
First, since $|h(f)\xi'(f)|\le C(\supp\xi')<\infty$ and $\vp_\rho'(|v|)=0$ for $|v|\ge2\rho$ as well as $|v\vp_\rho'(|v|)|=|\rho^{-1}v\vp'(|\rho^{-1}v|)|\lesssim1$, 
the dominated convergence theorem  yields
\begin{align}
\Big|\int_0^T\!\!\int_\dom h(f)v\cdot &[\xi'(f)\psi \vp_\rho'(|v|)\cdot\tfrac{v}{|v|} ]\,\dd v\dd t\Big|
\\	&\le \int_0^T\!\!\int_\dom |h(f)\xi'(f)||\psi||v\vp_\rho'(|v|)|\,\dd v\dd t \;\longrightarrow\; 0
\text{ as }\rho\to0.
\end{align}
The remaining part of the integral is more delicate. We estimate using the radial symmetry of $f(t,v)\;(=:g(t,|v|))$
\begin{align}
	\Big|\int_0^T\!\!\int_\dom& \nabla f\cdot [\xi'(f)\psi \vp_\rho'(|v|)\cdot\tfrac{v}{|v|} ]\,\dd v\dd t\Big|
	\\&\le C\int_0^T\!\!\int_0^{2\rho} |\xi'(g)\partial_rg| \rho^{-1}|\vp'(\rho^{-1}r)|r^{d-1}\dd r\dd t
\\&=C\int_0^TA(t,\rho)\,\dd t,
\end{align}
where we abbreviated 
\begin{align}
	 A(t,\rho)=\int_0^{2\rho} |\xi'(g)\partial_rg| \rho^{-1}|\vp'(\rho^{-1}r)|r^{d-1}\dd r.
\end{align}
\label{page:Alim.ren}
As a consequence of the bound~\eqref{eq:apriori}, we have $|\xi'(g)r^{d-1}\partial_rg|\le C\kcp+C(\supp\xi')r^d$. We hence infer the following $(t,\rho)$-uniform bound on $|A(t,\rho)|:$
\begin{align}
	|A(t,\rho)|\le C \int_0^{2\rho}\rho^{-1}|\vp'(\rho^{-1}r)|\,\dd r=C \int_0^{2}|\vp'(\hat r)|\,\dd \hat r.
\end{align}
Thus, to show that $\lim_{\rho\to0}\int_0^TA(t,\rho)\,\dd t=0$ it suffices to prove
 the pointwise convergence $\lim_{\rho\to0}A(t,\rho)=0$ for (almost) all $t\in(0,T]$.
  Thanks to Theorem~\ref{thm:profile.r}, only the following two cases may occur.
 
 \medskip

\underline{Case 1: $g(t,0+)=+\infty$.}
In this case, there exists $r_*>0$ such that $\xi'(g(t,r))=0$ for all $r\in(0,r_*)$. Hence, we trivially have $\lim_{\rho\to0} A(t,\rho)=0$.
\smallskip

\underline{Case 2: $g(t,\cdot)\in L^\infty$.}
In this case, by Theorem~\ref{thm:profile.r}, there exists a neighbourhood $J$ of $t$ such that $f_{|J\times\mathbb{R}^d}$ is smooth. In particular, $\partial_rg(t,\cdot)\in L^\infty(0,1)$. 
If $d>1$, the conclusion $\lim_{\rho\to0} A(t,\rho)=0$ then directly follows from the definition of $A(t,\rho)$, while for $d=1$ we resort to the fact that $\sup_{r\in(0,\rho)}|\partial_rg(t,r)|\to 0$ as $\rho\to0$.
\end{proof}

\subsection{Energy dissipation identity}\label{ssec:edi}

An argument similar to that in the proof of Theorem~\ref{thm:renorm} shows that isotropic solutions satisfy the energy dissipation balance. In the anisotropic case, we obtain  an inequality.
	
	\begin{proof}[Proof of Proposition~\ref{prop:edb}]
		Combining Lemmas~\ref{l:edi.reg} and~\ref{l:edinH} with the convergence properties of $f_\ve$ to $f$ in Proposition~\ref{prop:lim}~\ref{it:approxProp}, we readily infer for all $t>0$ the inequality
		\begin{align}
		\Entr(f(t))+	\int_0^t\mathcal{D}(f(\tau))\,\dd\tau \le \Entr(\fin),
		\end{align}
	where 
	\[
	\mathcal{D}(f):=\int_\dom \tfrac{1}{h(f)}|\nabla f+h(f)v|^2\,\dd v.
	\]	
		It remains to prove that in the isotropic case the above inequality holds with an equality.
		Then, the asserted identity~\eqref{eq:edb} follows by subtracting on both sides the quantity $\Entr(f(s))$, which is then known to equal $\Entr(\fin)-\int_0^s\mathcal{D}(f(\tau))\,\dd\tau$.
		Thus, in the remainder, we assume that $\fin$ is isotropic. 
		Moreover, without loss of generality, we may assume  hypotheses~\ref{eq:hpinit}. Otherwise we replace $\fin$ by $f(t_0)$ for small $t_0>0$. From the arguments below we will then obtain the identity
		$	\Entr(f(t))+	\int_{t_0}^t\mathcal{D}(f(\tau))\,\dd\tau = \Entr(f(t_0)),$ and taking the limit $t_0\downarrow0$, using monotone convergence for the term $	\int_{t_0}^t\mathcal{D}(f(\tau))\,\dd\tau$ and dominated convergence for $\Entr(f(t_0))$	we will arrive at the assertion.
		
As in the proof of Theorem~\ref{thm:renorm} (cf.\ Section~\ref{ssec:renorm}), we pick a non-decreasing function $\vp\in C^\infty([0,\infty);[0,1])$ satisfying $\vp(r)=0$ for $r\in[0,1]$ and $\vp(r)=1$ for $r\ge 2$, and abbreviate $\vp_\rho(r)=\vp(r/\rho)$ for $\rho\in(0,1]$. 
	Then, defining
		\begin{align}
		\Entr^{(\rho)}(f):=\int_{\dom}[\,\tfrac{1}{2}|v|^2f+\Phi(f)\,]\vp_\rho(|v|)\,\dd v,
	\end{align}
	one has
	\begin{align}
		\Entr^{(\rho)}(f(t))-\Entr^{(\rho)}(\fin)&
		=\int_0^t\!\int_{\dom}[\,\tfrac{1}{2}|v|^2+\Phi'(f)\,]\,\divv(\nabla f+h(f)v)\vp_\rho(|v|)\,\dd v\dd\tau
		\\&=-\int_0^t\!\int_{\dom} \frac{1}{h(f)}|\nabla f+h(f)v|^2\vp_\rho(|v|)\,\dd v\dd\tau
		\\&\qquad-\int_0^t\!\int_{\dom}[\,\tfrac{1}{2}|v|^2+\Phi'(f)\,]\,(\nabla f+h(f)v)\cdot\tfrac{v}{|v|}\vp_\rho'(|v|)\,\dd v\dd\tau.
	\end{align}
	
		We note that
		\begin{align}
			\lim_{\rho\to0}	\Entr^{(\rho)}(f(t)) = \Entr(f(t)) \text{ for all }t\ge0.
		\end{align}		
		Furthermore,  monotone convergence gives
		\begin{align}
			\lim_{\rho\to0}	
			\int_0^t\!\int_{\dom} \frac{1}{h(f)}|\nabla f+h(f)v|^2\vp_\rho(|v|)\,\dd v\dd\tau
			=	\int_0^t\!\int_{\dom} \frac{1}{h(f)}|\nabla f+h(f)v|^2\,\dd v\dd\tau.
		\end{align}
		Hence, it remains to prove that  the quantity
			\begin{align}
			B(\tau,\rho):=\int_{\dom}[\,\tfrac{1}{2}|v|^2+\Phi'(f)\,]\,(\nabla f+h(f)v)\cdot\tfrac{v}{|v|}\vp_\rho'(|v|)\,\dd v
		\end{align}
		satisfies
		\begin{align}\label{eq:105}
			\lim_{\rho\to0}	\int_0^t B(\tau,\rho)\,\dd\tau = 0.
		\end{align}
	Using the isotropy of $f(\tau,\cdot)$ we write 
		\begin{align}
		B(\tau,\rho)=c_d\int_0^{2\rho}[\,\tfrac{1}{2}r^2+\Phi'(g)\,]\,(\partial_r g+h(g)r)\vp_\rho'(r)\,r^{d-1}\dd r,
	\end{align}
	where $c_d$ denotes the area of the unit sphere. We can now argue similarly as in the proof of Theorem~\ref{thm:renorm} (see page~\pageref{page:Alim.ren}).
The function $|B(\tau,\rho)|$ is uniformly bounded  for $(\tau,\rho)\in[0,t]\times (0,1]$ thanks to the estimate
	$|(\partial_rg+rh(g))r^{d-1}|\le K_*$ and the fact that, 
	by~\ref{eq:hpinit} and Proposition~\ref{prop:lim}~\ref{it:dec}, $\inf_{[0,t]\times(0,1]}g=: \iota>0$.
	Indeed, note that for $(\tau,\rho)\in[0,t]\times (0,1]$
	\begin{align}
			|B(\tau,\rho)|\le C(\Phi'(\iota))K_*\int_0^{2\rho}|\vp'(\rho^{-1}r)|\rho^{-1}\dd r
			=C(\Phi'(\iota))K_*\int_0^{2}|\vp'(\hat r)|\dd \hat r.
	\end{align}
	Identity~\eqref{eq:105} therefore follows from the dominated convergence theorem provided we can prove the pointwise convergence 
	$\lim_{\rho\to0}B(\tau,\rho)=0$ for a.e.\ $\tau>0$.
	\smallskip
	
	\underline{Case 1: $g(\tau,0+)=+\infty$.}
	In this case, we estimate
	\begin{align}
		|B(\tau,\rho)|\le C\kcp \int_0^{2}|\vp'(\hat r)|\dd \hat r
		\cdot\sup_{r\in(0,\rho)}|\tfrac{1}{2}r^2+\Phi'(g(\tau,r))|
	\end{align}
and note that the sublinearity of $\Phi$ at infinity implies
	\begin{align}
		\sup_{r\in(0,\rho)}|\tfrac{1}{2}r^2+\Phi'(g(\tau,r))|\to0\quad \text{ as }\rho\to0.
	\end{align}
	Hence, $\lim_{\rho\to0}	|B(\tau,\rho)|=0$.
	\smallskip
	
	\underline{Case 2: $g(\tau,\cdot)\in L^\infty$.}
	Here, the assertion $\lim_{\rho\to0}|B(\tau,\rho)|=0$ is obtained similarly
	as in Case 2 of the proof of Theorem~\ref{thm:renorm} using the regularity of $g(\tau,\cdot)$ shown in Theorem~\ref{thm:profile.r}.
	\end{proof}
	
\section{Long-time behaviour}\label{sec:longtime}

\subsection{Relaxation to equilibrium}\label{ssec:relax}

\begin{proof}[Proof of Theorem~\ref{thm:long-time}]	
	Let
	\[
	\mathcal{D}(f):=\int_\dom \frac{1}{h(f)}|\nabla f+h(f)v|^2\,\dd v=\int_\dom h(f)|\nabla\delta \Entr(f)|^2\,\dd v.
	\]	
	Proposition~\ref{prop:edb} implies that
	$\int_0^\infty\mathcal{D}(f(t))\,\dd t\le \mathcal{H}(\fin)-\inf_{\mathcal{M}_+}\mathcal{H}<\infty$, and hence
	there exists an increasing sequence $t_k\to\infty$ such that 
	\begin{align}\label{eq:D->0}
		\lim_{k\to\infty}\mathcal{D}(f(t_k))=0.
	\end{align}

The sequence $\{\mu_{t_k}\}_k\subset\mathcal{M}_+(\mathbb{R}^d)$ of measures of mass $m$ is tight since $\sup_k\int|v|^3\dd\mu_{t_k}\le C\|\fin\|_{L^1_3}<\infty$. Prokhorov's theorem thus ensures the existence of a measure
$\mu_\infty\in\mathcal{M}_+(\mathbb{R}^d)$ with $\int_\dom\dd\mu_\infty=m$ such that, along a subsequence (not relabelled),
	 \begin{align}\label{eq:598}
	 	\mu_{t_k}&\weakstar\mu_\infty\quad\text{ in }\mathcal{M}_+(\mathbb{R}^d),
	 	\\\hspace{-3cm}\text{ and moreover, }\qquad\int|v|^2\dd\mu_{t_k}&\to\int|v|^2\dd\mu_\infty.
	 \end{align} 
 	At the same time, by Proposition~\ref{prop:lim}~\ref{it:dec}, $\mu_{t_k}=a(t_k)\delta_0+f(t_k)\mathcal{L}^d$
 	where $f$ satisfies a time-uniform bound of the form  $|f(t,r)|\le C(\rho)$ for all $r\ge\rho>0$ (cf.\ L.~\ref{l:boundiso} resp.\ Cor.~\ref{cor:boundaniso}).
 	Thus, the sequence $f_k(t):=f(t_k+t)$ obeys an estimate 
 	analogous to~\eqref{eq:597} for $G\subset\subset(-1,\infty)\times(\mathbb{R}^d\setminus\{0\})$, where we assume without loss of generality that $t_1\ge1$.
 	 We may therefore argue similarly as in the proof of Proposition~\ref{prop:lim} and invoke the Arzel\`a--Ascoli theorem to infer the existence of $f_\infty\in C^2(\mathbb{R}^d\setminus\{0\})\cap L^1_3(\mathbb{R}^d)$ such that, after possibly passing to another subsequence, 
 	\begin{align}\label{eq:792}
 		f(t_k)\to f_\infty\quad\text{ in }C^2_\loc(\mathbb{R}^d\setminus\{0\}).
 	\end{align}
Notice that $\mu_\infty^\mathrm{reg}=f_\infty\mathcal{L}^d$ and $\supp\mu_\infty^\mathrm{sing}\subseteq\{0\}$, as a consequence of~\eqref{eq:598} and~\eqref{eq:792}.
 
 We will now show that $\mu_\infty$ agrees with the minimiser of $\mathcal{H}$ of mass $m$.
To this end, let 
$\lambda(s)=\exp(\Phi'(s))=\frac{s}{(1+s^\gamma)^{1/\gamma}}$
and note that $\frac{h(s)}{\lambda^2(s)} 
= \frac{s(1+s^\gamma)^{1+2/\gamma}}{s^2}\ge (1+s^\gamma)^{1+\frac{1}{\gamma}}\ge 1$ for all $s\in(0,\infty)$. 
 Hence, 
 \begin{align}
 	\mathcal{D}(f)&=\int_{\mathbb{R}^d}h(f)|\nabla\Phi'(f)+v|^2\,\dd v
 	\\&=\int_{\mathbb{R}^d}\tfrac{h(f)}{\lambda(f)^2}|\lambda(f)\nabla\Phi'(f)+\lambda(f)v|^2\,\dd v
 	\\&\ge\int_{\mathbb{R}^d}|\nabla\lambda(f)+v\lambda(f)|^2\,\dd v.
 \end{align}
Thanks to the convergence~\eqref{eq:D->0}, the last estimate applied to $f:=f(t_k)$ implies that
\begin{align}
	\nabla\lambda(f(t_k))+v\lambda(f(t_k))\to 0\quad\text{ in }L^2(\mathbb{R}^d). 
\end{align}
At the same time, using~\eqref{eq:792}, we may pass to the limit in the sense of distributions
 \begin{align}
 	\nabla\lambda(f(t_k))+v\lambda(f(t_k))\to	\nabla\lambda(f_\infty)+v\lambda(f_\infty)\quad\text{ in }\mathscr{D}'(\mathbb{R}^d{\setminus}\{0\}). 
 \end{align}
 Thus, $\nabla\lambda(f_\infty)+v\lambda(f_\infty)=0$ in $\mathscr{D}'(\mathbb{R}^d{\setminus}\{0\})$, and hence 
 $\nabla\big(\ee^{\frac{1}{2}|v|^2}\lambda(f_\infty)\big)=0$ in $\mathbb{R}^d{\setminus}\{0\}$.
This implies that $\Phi'(f_\infty)+\frac{1}{2}|v|^2\equiv-\theta$ for a constant $\theta\in\mathbb{R}_{\ge0}$, where the sign of $\theta$ follows from the fact that $\Phi'\le0$.
Hence, $f_\infty(v)=(\Phi')^{-1}(-\frac{1}{2}|v|^2-\theta)=f_{\infty,\theta}(v)$.
To determine $\theta$,  recall that $\mu_\infty(\mathbb{R}^d)=m$. Thus, if $\|f_{\infty,\theta}\|_{L^1(\mathbb{R}^d)}<m$, then $\mu_\infty(\{0\})>0$.
 In this case,  the convergence~\eqref{eq:598} combined with the time-uniform upper bound $\sup_{t>0}f(t,v)\le C|v|^{-\frac{2}{\gamma}}\in L^1(B_{r_*})$, which follows from Theorem~\ref{thm:profile.r} (resp.\ Cor.~\ref{cor:upper.ani}), implies the existence of $\underline k\in \mathbb{N}$ such that  $\mu_{t_k}(\{0\})>0$ for all $k\ge\underline k$. Invoking once more Theorem~\ref{thm:profile.r} (now using the radial symmetry assumption), we find that~\eqref{eq:profile*} holds true for all such $t_k$ and, owing to the convergence~\eqref{eq:792}, we conclude that $\theta=0$. 
 If on the other hand $\|f_{\infty,\theta}\|_{L^1(\mathbb{R}^d)}=m$, there is no excess mass and we must have $\mu_\infty=f_{\infty,\theta}\mathcal{L}^d$.
In conclusion, we have shown that the measure $\mu_\infty$ coincides with the unique minimiser 
 $\mu_{\min}=\mu_{\min}^{(m)}$ of mass $m$.
 
 From the convergence properties established so far we infer $\lim_{k\to\infty}\Entr(\mu_{t_k})=\Entr(\mu_{\min})$. Since $t\mapsto\Entr(\mu_t)$ is non-increasing, this immediately yields
 \begin{align}
 	\lim_{t\to\infty}\Entr(\mu_t)=\Entr(\mu_{\min}).
 \end{align}
 Combining this result with the above compactness properties,  mass conservation, and the uniqueness of the minimiser $\mu_{\min}^{(m)}$, one can easily deduce the remaining 
 convergence properties along any sequence $t\to\infty$ as asserted in Theorem~\ref{thm:long-time}. Here, also recall the bounds $\sup_{t>0}f(t,v)\lesssim |v|^{-\frac{2}{\gamma}}$ for $|v|\le r_*$, $f(t,v)\lesssim 1$ for $|v|\ge r_*$ and
 $\sup_{t>0}\|f(t)\|_{L^1_3}\lesssim\|\fin\|_{L^1_3}$, which guarantee that $\lim_{t\to\infty}\mu_t(\{0\})=\mu_\infty(\{0\})$ and 
 $\lim_{t\to\infty}\|f(t)-f_{\infty,\theta}\|_{L^p(\mathbb{R}^d)}=0$ for $p\in[1,\frac{\gamma d}{2})$.
\end{proof}

\subsection{Long-time and transient properties}\label{ssec:dynprop}
Let us briefly point out some  implications of the above analysis on further qualitative dynamical properties, restricting for consistency to the isotropic case. 
If $m<m_c$, Theorem~\ref{thm:long-time} along with Theorem~\ref{thm:profile.r} imply the eventual regularity of $\mu_t$ after some sufficiently large time $T\gg1$. However, using a contradiction argument, finite-time blow-up and the formation of a condensate (that is $\mu_t(\{0\})>0$ for some $t>0$)
can be shown to occur for any size of the mass $m>0$ by choosing the smooth initial data sufficiently concentrated near the origin (cf.~\cite{CHR_2020,Toscani_2012}). 
Hence,  there exist flows exhibiting \textit{transient condensates} with singular parts compactly supported in time.
On the other hand, whenever $m>m_c$, the above theory implies the eventual formation of a condensate: $\exists T\gg1$ such that $\mu_{t}(\{0\})>0$ for all $t\ge T$. This is a consequence of the convergence $\lim_{t\to\infty}\mu_{t}(\{0\})=\mu_{\min}(\{0\})$.
It is also possible to infer information on the spatiotemporal features of singularity formation 
and regularisation using rescaling methods. We refer to~\cite[Chapter~5.2]{hopf_PhDThesis_2019}, where such dynamics have been shown to be of \enquote{type II} for the 1D case.

Finally, we note that finite-time condensation in the mass-supercritical case and convergence to the entropy minimiser can also be deduced in the anisotropic setting if $\fin$ admits a mass-supercritical isotropic lower barrier,
 i.e.\ $\fin\ge \fin^\#$ for some non-negative radially symmetric function $\fin^\#$ with $\int\fin^\#> m_c$.
 In this case, the density $f(t,\cdot)$ is squeezed between two isotropic barriers which, in virtue of Theorem~\ref{thm:long-time}, both converge to $f_c$ as $t\to\infty$.

\subsection{Concluding remark}

The comparison principle structure provides us with a priori bounds that allow for a detailed characterisation of the singularities which isotropic flows starting from regular data may exhibit (and even gives uniqueness in the 1D case~\cite{CHR_2020,hopf_PhDThesis_2019} resp.\ convergence of the scheme to a unique limit in higher dimensions). 
However, one may not expect such a structure to persist in more complex situations.
Particularly with regard to the study of uniqueness and stability properties in the presence of singularities, it would be interesting to see whether variational problems like~\eqref{eq:fpn} 
allow for more robust approaches.

\appendix

\section{}
\label{app:FPsemigr}

Recall that $	\mathcal{F}(t,v,w) = \ee^{dt}G_{\nu(t)}(\ee^tv-w)$, $\nu(t)=\mathrm{e}^{2t}-1$, 
	$G_\lambda(\xi)=(2\pi\lambda)^{-\frac{d}{2}}\mathrm{e}^{-\frac{|\xi|^2}{2\lambda}}$.

\begin{lemma}\label{l:rdfp}
	 Let $T\le 1$ and let $\tilde q\in[1,\infty]$. There exists $C=C(\tilde q, d)<\infty$ such that for all $t\in(0,T]$
 \begin{align}\label{eq:rdfp}
	\begin{aligned}
		\bigg\|\int_{\mathbb{R}^d}|\nabla_v\mathcal{F}(t,v,w)||\ee^{-t}w-v| |f(w)|\,\dd w\bigg\|_{L^{\tilde q}(\mathbb{R}^d)}
		&\le C\|f\|_{L^{\tilde q}(\mathbb{R}^d)}
		\\&\le C\nu(t)^{-\frac{1}{2}}\|f\|_{L^{\tilde q}(\mathbb{R}^d)}.
	\end{aligned}
\end{align}
\end{lemma}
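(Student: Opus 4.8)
The plan is to reduce the weighted estimate in~\eqref{eq:rdfp} to the standard Young convolution bound by carefully tracking the Gaussian weight that the factor $|\ee^{-t}w - v|$ contributes. First I would recall the explicit form $\mathcal{F}(t,v,w) = \ee^{dt} G_{\nu(t)}(\ee^t v - w)$ and compute
$\nabla_v\mathcal{F}(t,v,w) = -\ee^{(d+1)t}\,\nu(t)^{-1}(\ee^t v - w)\,G_{\nu(t)}(\ee^t v - w)$,
so that $|\nabla_v\mathcal{F}(t,v,w)| = \ee^{(d+1)t}\nu(t)^{-1}|\ee^t v - w|\,G_{\nu(t)}(\ee^t v - w)$. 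Next I observe that $\ee^{-t}w - v = -\ee^{-t}(\ee^t v - w)$, hence $|\ee^{-t}w - v| = \ee^{-t}|\ee^t v - w|$. Substituting, the kernel in the integrand becomes
$\ee^{dt}\nu(t)^{-1}|\ee^t v - w|^2\,G_{\nu(t)}(\ee^t v - w)$.

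The key step is then to dominate $|\xi|^2 G_{\nu(t)}(\xi)$ by a constant multiple of $\nu(t)\, G_{2\nu(t)}(\xi)$ (or any fixed enlargement of the variance). Indeed, using $|\xi|^2 \ee^{-|\xi|^2/(2\nu)} = \nu\,\big(\tfrac{|\xi|^2}{\nu}\ee^{-|\xi|^2/(2\nu)}\big) \le C\,\nu\,\ee^{-|\xi|^2/(4\nu)}$ since $s\ee^{-s/2}\le C\ee^{-s/4}$ for $s\ge0$, and then comparing normalisation constants, one gets $|\xi|^2 G_{\nu}(\xi) \le C(d)\,\nu\, G_{2\nu}(\xi)$. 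Therefore the integrand is bounded by $C(d)\,\ee^{dt}\,G_{2\nu(t)}(\ee^t v - w)$, and the integral operator $f\mapsto \int_{\mathbb{R}^d} \ee^{dt} G_{2\nu(t)}(\ee^t v - w)\,|f(w)|\,\dd w$ is, after the linear change of variables $w\mapsto \ee^t v - w$ (Jacobian one), a convolution $\mathbf{1}\ast$-type operation: more precisely it equals $\ee^{dt}(G_{2\nu(t)}\ast |f|)(\ee^t v)$ up to the dilation in the argument. Young's inequality gives $\|G_{2\nu(t)}\ast |f|\|_{L^{\tilde q}} \le \|G_{2\nu(t)}\|_{L^1}\|f\|_{L^{\tilde q}} = \|f\|_{L^{\tilde q}}$, and undoing the dilation $v\mapsto \ee^t v$ costs a factor $\ee^{-dt/\tilde q}\le 1$ for $t\ge0$, which is absorbed together with the $\ee^{dt}$ prefactor into a constant depending only on $T\le1$ and $d$. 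This yields the first inequality in~\eqref{eq:rdfp} with $C = C(\tilde q, d)$.

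For the second inequality in~\eqref{eq:rdfp}, I would simply note that $T\le1$ implies $\nu(t) = \ee^{2t} - 1 \le \ee^{2T} - 1 \le \ee^2 - 1$ for all $t\in(0,T]$, so $\nu(t)^{-1/2}\ge (\ee^2-1)^{-1/2} =: c_0 > 0$, and hence $C\|f\|_{L^{\tilde q}} \le C c_0^{-1}\nu(t)^{-1/2}\|f\|_{L^{\tilde q}}$; absorbing $c_0^{-1}$ into the constant completes the proof. The only mildly delicate point is the Gaussian pointwise domination $|\xi|^2 G_\nu(\xi)\le C(d)\nu G_{2\nu}(\xi)$ — it is elementary but one must get the variance-enlargement bookkeeping right so that the resulting Gaussian still has unit $L^1$-norm; everything else is a routine change of variables plus Young's inequality, with the $\ee^{dt}$ and dilation factors harmless because $t$ ranges over the bounded interval $(0,T]$ with $T\le1$.
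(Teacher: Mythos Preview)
Your proof is correct and follows essentially the same route as the paper: compute $\nabla_v\mathcal{F}$, use $|\ee^{-t}w-v|=\ee^{-t}|\ee^tv-w|$ to obtain a convolution-type kernel of the form $\ee^{dt}\nu(t)^{-1}|\xi|^2 G_{\nu(t)}(\xi)$ with $\xi=\ee^tv-w$, and conclude via Young's inequality (the paper uses the substitution $\tilde w=\ee^{-t}w$ and directly exploits that the kernel $|\xi|^2/\nu\cdot G_\nu(\xi)$ has $L^1$-norm equal to $d$, whereas you insert the pointwise domination $|\xi|^2G_\nu(\xi)\le C(d)\,\nu\,G_{2\nu}(\xi)$ --- a harmless tactical variation). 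The second inequality is handled identically in both, using that $\nu(t)$ is bounded above on $(0,T]$ for $T\le1$.
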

\begin{proof}The second bound in~\eqref{eq:rdfp} is trivial. 
	
	To verify the first inequality, we compute for $t\in(0,T]$
\begin{align}
	\int_{\mathbb{R}^d}|\nabla_v&\mathcal{F}(t,v,w)||\ee^{-t}w-v| |f(w)|\,\dd w
	\\&=(2\pi\nu(t))^{-\frac{d}{2}}\ee^{dt}\ee^{t}(2\nu(t))^{-\frac{1}{2}}\int_{\mathbb{R}^d}2\tfrac{|\ee^{t}v-w|}{\sqrt{2\nu(t)}}
	\mathrm{e}^{-\frac{|\ee^tv-w|^2}{2\nu(t)}}
	|\ee^{-t}w-v| |f(w)|\,\dd w
	\\&=(2\pi\nu(t))^{-\frac{d}{2}}\ee^{2dt}\ee^{t}(2\nu(t))^{-\frac{1}{2}}
	\int_{\mathbb{R}^d}2\frac{|\ee^{t}(v-\tilde w)|}{\sqrt{2\nu(t)}}
	\mathrm{e}^{-\frac{|\ee^t(v-\tilde w)|^2}{2\nu(t)}}
	|\tilde w-v| |f(\ee^t\tilde w)|\,\dd \tilde w 
	\\&=(2\pi\nu(t))^{-\frac{d}{2}}\ee^{2dt}
	\int_{\mathbb{R}^d}2\frac{|\ee^{t}(v-\tilde w)|^2}{2\nu(t)}
	\mathrm{e}^{-\frac{|\ee^t(v-\tilde w)|^2}{2\nu(t)}}
	|f(\ee^t\tilde w)|\,\dd \tilde w.
\end{align}
Now, the asserted inequality follows upon an application of Young's convolution inequality, $\|a\ast b\|_{L^{\tilde q}}\le \|a\|_{L^1}\|b\|_{L^{\tilde q}}$.
\end{proof}

\section*{Acknowledgements}
The author would like to thank Prof.\ John King for valuable discussions on this topic.
	
	\newcommand{\etalchar}[1]{$^{#1}$}

\end{document}